\def\ord#1{[#1]}
\newtheorem{thm}[equation]{Theorem}
\newtheorem{cor}[equation]{Corollary}
\newtheorem{prop}[equation]{Proposition}
\theoremstyle{definition}
\newtheorem{rem}[equation]{Remark}
\newcommand{\norm}[1]{\left\Vert#1\right\Vert}
\newcommand{\abs}[1]{\left\vert#1\right\vert}
\newcommand{\eps}{\varepsilon}
\newcommand{\To}{\longrightarrow}
\newcommand{\id}[1]{\operatorname{id}_{#1}}
\def\r{\rightarrow} % flecha -->
\def\into{\rightarrowtail}
\def\st{\stackrel} % abreviatura de \stackrel
\numberwithin{equation}{section}
\newcommand{\ass}[4]{\text{ass.}}
\newcommand{\com}[3]{\text{sym.}}
\newcommand{\hpp}[3]{\text{mult.}}
\newcommand{\up}[1]{\text{unit.}}
\newcommand{\card}{\operatorname{card}}
\newcommand{\level}[1]{\operatorname{level}(#1)}
\begin{document}

%\newxyColor{cola}{0.1 0.6 0.1}{rgb}{}
%\newxyColor{colb}{0.8 0.1 0.1}{rgb}{}
%\newxyColor{colc}{0.1 0.1 0.8}{rgb}{}
%\newxyColor{cold}{0.4 0.1 0.4}{rgb}{}

%\newxyColor{cole}{.8 .2 1}{rgb}{}
%\newxyColor{colf}{0.8 0.6 0.0}{rgb}{}

\title%[Determinant functors \dots]
{Unital associahedra}%
\author{Fernando Muro}%
\address{Universidad de Sevilla,
Facultad de Matem\'aticas,
Departamento de \'Algebra,
Avda. Reina Mercedes s/n,
41012 Sevilla, Spain}
\email{fmuro@us.es}
\author{Andrew Tonks}%
\address{Faculty of Computing, London Metropolitan University, 166--220
Holloway Road, London N7 8DB, United Kingdom}
\email{a.tonks@londonmet.ac.uk}

\thanks{The authors were partially supported
by the Spanish Ministry of Education and
Science under the MEC-FEDER grant  MTM2010-15831 and by the Government of Catalonia under the grant SGR-119-2009. The first author was also partially supported by the Andalusian Ministry of Economy, Innovation and Science under the grant FQM-5713.}
\subjclass{18D50, 18G55}
\keywords{operad, monoid, associative algebra, unit, associahedra}

% ----------------------------------------------------------------
\begin{abstract}
We construct a topological cellular operad such that the algebras over its cellular chains are the homotopy unital $A_{\infty}$-algebras of Fukaya--Oh--Ohta--Ono. 
\end{abstract}

\maketitle
\tableofcontents

% ----------------------------------------------------------------

%\numberwithin{equation}{subsection}

\section*{Introduction}

The associahedra are spaces introduced by Stasheff to parametrize natural multivariate operations on loop spaces \cite{hahs}. These natural operations characterize connected loop spaces. This algebraic setting originated the theory of operads and their algebras \cite{tgils}.  

As Milnor first discovered (unpublished), the associahedra can be realized as polytopes in such a way that the operad laws are inclusions of faces.  Algebras over the differential graded (DG) operad obtained by taking cellular chains on associahedra are $A_{\infty}$-algebras, i.e.\ strongly homotopy associative algebras. These algebras are a good replacement for non-unital DG-algebras. 

Bringing coherent units into the picture turned out to be a more complicated task. The homotopy coherent notion of unital DG-algebra was recently introduced by Fukaya--Oh--Ohta--Ono in their work on symplectic geometry in a purely algebraic way \cite{fooo1,fooo2}. 
The same notion arises in Koszul duality theory for DG-operads 
with quadratic, linear and constant relations over a field of characteristic zero 
\cite{ckdt}. 

In this paper we construct a topological cellular operad such that the algebras over its cellular chains over any commutative ring are the homotopy unital $A_{\infty}$-algebras of Fukaya--Oh--Ohta--Ono. The cell complexes forming this operad are therefore called \emph{unital associahedra}. These cell complexes are contractible, but unlike in the non-unital case, unital associahedra are not finite-dimensional and their skeleta cannot be realized as polytopes, see for instance Figure \ref{discochino}.

\section{Operads}

A \emph{topological operad} $\mathtt{O}$ is a sequence of topological spaces $\mathtt{O}(n)$, $n\geq 0$, together with composition laws 
$$\circ_i\colon \mathtt{O}(m)\times \mathtt{O}(n)\To \mathtt{O}(n+m-1),\qquad 1\leq i\leq m,\quad n\geq 0,$$
and an element $u\in \mathtt{O}(1)$ satisfying the following relations:
\begin{enumerate}
\item $(a\circ_i b)\circ_j c=(a\circ_j c)\circ_{i+n-1}b$ if $1\leq j<i$ and $c\in\mathtt{O}(n)$.
\item $(a\circ_i b)\circ_j c=a\circ_i( b\circ_{j-i+1}c)$ if $b\in\mathtt{O}(m)$ and $i\leq j <m+i$.
\item $u\circ_1 a=a$.
\item $a\circ_i u=a$.
\end{enumerate}
%The operad $\mathtt{O}$ is \emph{filtered} if the spaces $\mathtt{O}(n)$ are endowed with increasing filtrations
%$$\mathtt{O}(n)_{0}\subset \cdots\subset \mathtt{O}(n)_{m}\subset \mathtt{O}(n)_{m+1}\subset\cdots\subset \mathtt{O}(n),\qquad \mathtt{O}(n)=\bigcup_{m=0}^{\infty}\mathtt{O}(n)_{m},$$
%such that $$\mathtt{O}(p)_{s}\circ_{i} \mathtt{O}(q)_{t}\subset \mathtt{O}(p+q-1)_{s+t},\qquad 1\leq i\leq p.$$

Let $\Bbbk$ be a commutative ring. A \emph{graded operad} is similarly defined, replacing spaces by graded $\Bbbk$-modules and the cartesian product $\times$ by the usual $\Bbbk$-linear tensor product of graded $\Bbbk$-modules $\otimes_\Bbbk$, which follows the Koszul sign rule. Therefore the first equation must be replaced with
\begin{itemize}
\item[$(1')$] $(a\circ_i b)\circ_j c=(-1)^{|b||c|}(a\circ_j c)\circ_{i+n-1}b$ if $1\leq j<i$ and $c\in\mathtt{O}(n)$.
\end{itemize}
The other three equations do not change. The unit must have degree $0$.  A \emph{differential graded operad} is defined in the same way, endowing graded $\Bbbk$-modules with differentials. The unit must be a cycle. %One can also consider filtered operads in this context.

Any differential graded operad has an underlying graded operad. If $\mathtt{O}$ is a topological operad made up from cell complexes in such a way that $u\in\mathtt{O}(1)$ is a vertex and the composition laws are cellular maps, then the cellular chain complexes with coefficients in $\Bbbk$, $C_*(\mathtt{O}(n),\Bbbk)$, form a differential graded operad $C_*(\mathtt{O},\Bbbk)$ in the obvious way. %If $\mathtt{O}$ is filtered by subcomplexes then $C_*(\mathtt{O},\Bbbk)$ is also filtered.

These structures often appear in the literature under the name  \emph{non-symmetric} operad. We omit the adjective since we will not use any other kind of operad in this paper. 

\section{Trees}

A \emph{planted tree with leaves}
is a contractible finite $1$-dimensional simplicial complex $T$ 
with a set of vertices $V(T)$, 
%equipped with a total order $\preceq$, 
a non-empty set of edges $E(T)$, 
a distinguished vertex $r(T)\in V(T)$ called \emph{root}, 
and a set of distinguished vertices $L(T)\subset V(T)\setminus\{r(T)\}$ called \emph{leaves}. The root and the leaves must have degree $1$. Recall that the \emph{degree} of $v\in V(T)$ is the number of edges containing $v$. %Nevertheless, we will mostly use the following number,
%$$\val{T}{v}=(\text{degree of }v)-1.$$
%When $T$ is understood we abbreviate $\val{}{v}=\val{T}{v}$.
The other degree one vertices are called \emph{corks}. 

The \emph{level} of a vertex $v\in V(T)$ is the distance to the root, $\level{v}=d(v,r(T))$, with respect to the usual metric $d$ such that the distance between two adjacent vertices $\{u,v\}\in E(T)$ is $d(u,v)=1$. The \emph{height} $\operatorname{ht}(T)$ of a planted  tree with leaves $T$ is $$\operatorname{ht}(T)=\max_{v\in V(T)}\level{v}.$$

A \emph{planted planar  tree} with leaves is a planted tree with leaves $T$ together with a total order   
$\preceq$ on $V(T)$, called \emph{path order}, satisfying the following conditions. Given two vertices $v,w\in V(T)$:
\begin{itemize}
 \item If $v$ lies on the (shortest) path from $r(T)$ to $w$ then $v\prec w$.

% NO! v3 v8 v6 en fig 1 contradice estas defns!
 %given vertices $u,v,w\in V(T)$ such that $\{u,v\}\in E(T)$ and $\level{u}<\level{v}$ then $u\prec v$, and if in addition $\{u,w\}\notin E(T)$ and $u\prec w$ then $v\prec w$.
%
%\item If $w\in V(T)$ and $u\prec  w\prec v$ then $\{u,w\}\in E(T)$.
\item  Otherwise, assume that the path from $r(T)$ to $v$ coincides with the path from $r(T)$ to $w$ up to level $n$, and let $v '$ and $w'$ be the level $n+1$ vertices on these paths. If $v'\prec w'$ then $v\prec w$.
\end{itemize}
The set $E(T)$ is ordered according to the top vertex of each edge.
Our definition of planar structure is equivalent to giving a local ordering on the set of descendent edges at each vertex. However the global path order is essential in what follows.

\smallskip

All trees in this paper will be planted planar  trees with leaves. 
The geometric realization $|T|$ of a tree is depicted drawing vertices with the same level on the same horizontal line, following from left to right the order induced by the path order (see Figure~\ref{ppt}). 
For most purposes, the heuristic picture we should have in mind of a tree $T$ corresponds to the space  $\norm{T}=|T|\setminus\left(\{r(T)\}\sqcup L(T)\right)$.

\begin{figure}[H]
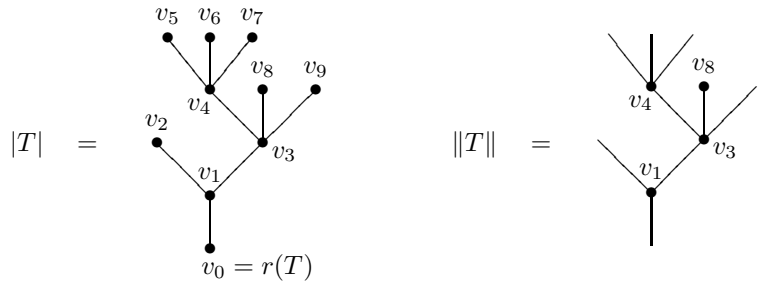

$$|T|\quad=\quad\begin{array}{c}
\xy/r2pt/:
(0,0)*-{\bullet}="a",
(9,-3.5)*{v_{0}=r(T)},
(0,10)*-<2.8pt>{\bullet}="b",
(0,14)*{v_1},
(-10,20)*-{\bullet}="c",
(-10,24)*{v_2},
(10,20)*-<3pt>{\bullet}="d",
(14,18)*{v_3},
(0,30)*-<3pt>{\bullet}="e",
(-2,27)*{v_4},
(10,30)*-{\bullet}="f",
(10,34)*{v_8},
(20,30)*-{\bullet}="g",
(20,34)*{v_9},
(-8,40)*-{\bullet}="h",
(-8,44)*{v_5},
(0,40)*-{\bullet}="i",
(0,44)*{v_6},
(8,40)*-{\bullet}="j",
(8,44)*{v_7},
\ar@{-}"a";"b",
\ar@{-}"b";"c",
\ar@{-}"b";"d",
\ar@{-}"d";"e",
\ar@{-}"d";"f",
\ar@{-}"d";"g",
\ar@{-}"e";"h",
\ar@{-}"e";"i",
\ar@{-}"e";"j",
\endxy
\end{array}
\qquad
\qquad
\norm{T}\quad=\quad\begin{array}{c}
\xy/r2pt/:
(0,0)*{}="a",
%(0,-3)*{a},
(0,10)*-<3pt>{\bullet}="b",
(0,14)*{v_1},
(-10,20)*{}="c",
%(-10,23)*{c},
(10,20)*-<3pt>{\bullet}="d",
(14,18)*{v_3},
(0,30)*-<3pt>{\bullet}="e",
(-2,27)*{v_4},
(10,30)*-{\bullet}="f",
(10,34)*{v_8},
(20,30)*{}="g",
%(20,33)*{g},
(-8,40)*{}="h",
%(-10,43)*{h},
(0,40)*{}="i",
%(0,43)*{i},
(8,40)*{}="j",
%(10,43)*{j},
\ar@{-}"a";"b",
\ar@{-}"b";"c",
\ar@{-}"b";"d",
\ar@{-}"d";"e",
\ar@{-}"d";"f",
\ar@{-}"d";"g",
\ar@{-}"e";"h",
\ar@{-}"e";"i",
\ar@{-}"e";"j",
\endxy
\end{array}
$$
\caption{On the left, the geometric realization of a tree $T$ with vertices ordered by the subscript. The set of leaves is $L(T)=\{v_2,v_5,v_6,v_7,v_9\}$. On the right, the space $\norm{T}$, where we can see the only cork $v_8$.}
\label{ppt}
\end{figure}

%Given $e=\{u\prec v\}\in E(T)$ we say that $e$ is \emph{an incoming edge} of $u$ and \emph{the outgoing edge} of $v$ (there is only one if $w\neq r(T)$ and none otherwise). 
An \emph{inner vertex} is a vertex which is neither a leaf nor the root, that is,  it has degree $>1$ or is a cork. An \emph{inner edge} is an edge whose vertices are inner. Denote $I(T)\subset E(T)$ the set of inner edges.  Abusing terminology, we say that a non-inner edge is the \emph{root} or a \emph{leaf} if it contains the root or a leaf vertex, respectively.
If $e\in I(T)$, the quotient $T/e$ has the same root and leaves, and $\norm{T/e}$ is depicted contracting $e$ %to $v$
 and moving the vertices over $e%v
$ one level downwards (see Figure~\ref{pptl2}). The path order is the quotient order.

\begin{figure}[H]
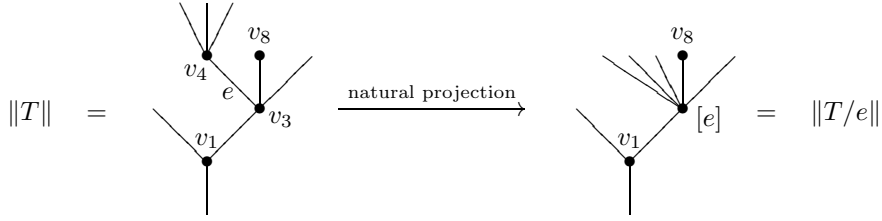

$$\norm{T}\quad=\quad\begin{array}{c}
\xy/r2pt/:
(0,0)*{}="a",
%(0,-3)*{a},
(0,10)*-<3pt>{\bullet}="b",
(0,14)*{v_1},
(-10,20)*{}="c",
%(-10,23)*{c},
(10,20)*-<3pt>{\bullet}="d",
(14,18)*{v_3},
(0,30)*-<3pt>{\bullet}="e",
(4,23)*{e},
(-2,27)*{v_4},
(10,30)*-<3pt>{\bullet}="f",
(10,34)*{v_8},
(20,30)*{}="g",
%(20,33)*{g},
(-5,40)*{}="h",
%(-10,43)*{h},
(0,40)*{}="i",
%(0,43)*{i},
(5,40)*{}="j",
%(10,43)*{j},
(80,0)*{}="aa",
%(0,-3)*{a},
(80,10)*-<3pt>{\bullet}="bb",
(80,14)*{v_1},
(70,20)*{}="cc",
%(-10,23)*{c},
(90,20)*-<3pt>{\bullet}="dd",
(95,18)*{[e]},
%(50,30)*-<3pt>{\bullet}="ee",
%(48,28)*{v_4},
(90,30)*-<3pt>{\bullet}="ff",
(90,34)*{v_8},
(100,30)*{}="gg",
%(20,33)*{g},
(75,30)*{}="hh",
%(-10,43)*{h},
(80,30)*{}="ii",
%(0,43)*{i},
(85,30)*{}="jj",
%(10,43)*{j},
\ar@{-}"a";"b",
\ar@{-}"b";"c",
\ar@{-}"b";"d",
\ar@{-}"d";"e",
\ar@{-}"d";"f",
\ar@{-}"d";"g",
\ar@{-}"e";"h",
\ar@{-}"e";"i",
\ar@{-}"e";"j",
\ar@{-}"aa";"bb",
\ar@{-}"bb";"cc",
\ar@{-}"bb";"dd",
%\ar@{-}"dd";"ee",
\ar@{-}"dd";"ff",
\ar@{-}"dd";"gg",
\ar@{-}"dd";"hh",
\ar@{-}"dd";"ii",
\ar@{-}"dd";"jj",
\ar(25,20);(60,20)^{\text{natural projection}}
\endxy
\end{array}=\quad\norm{T/e}$$
\caption{The  natural projection $T\r T/e$ contracting the inner edge $e=\{v_{3},v_{4}\}$.}
\label{pptl2}
\end{figure}

Morphisms in the category of trees are generated by these natural projections $T\r T/e$ and by the simplicial isomorphisms preserving the root, the leaves, and the path order. Notice that there are no non-trivial automorphisms in this category. From now on we consider a skeletal subcategory by choosing exactly one tree in each isomorphism class.
Since natural projections at different inner edges commute, this is a poset.

Given trees $T$ and $T'$ with $p$ and $q$ leaves, respectively, and $1\leq i\leq p$, the tree $T\circ_i T'$ is obtained by \emph{grafting} the root edge of $\norm{T'}$ onto the $i^{\text{th}}$ leaf edge of $\norm{T}$. 
The path order in $V(T\circ_i T')$ is obtained by inserting $V(T')\setminus\{r(T')\}$ into $V(T)$ in place of the $i^{\text{th}}$ leaf vertex.
%If $v\in L(T)$ is the $i^{\text{th}}$ leaf, the path order in $V(T\circ_i T')$ is obtained by inserting $V(T')\setminus\{r(T')\}$ in $V(T)\setminus\{v\}$ in the place where $v$ was.

\begin{figure}[H]
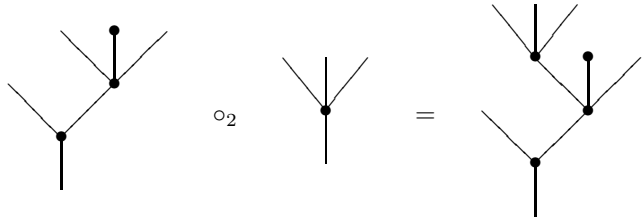

$$
\begin{array}{c}
\xy/r2pt/:
(0,0)*{}="a",
%(0,-3)*{a},
(0,10)*-<3pt>{\bullet}="b",
(-10,20)*{}="c",
%(-10,23)*{c},
(10,20)*-<3pt>{\bullet}="d",
(0,30)*{}="e",
(10,30)*-{\bullet}="f",
(20,30)*{}="g",
%(20,33)*{g},
%(-10,43)*{h},
%(0,43)*{i},
%(10,43)*{j},
\ar@{-}"a";"b",
\ar@{-}"b";"c",
\ar@{-}"b";"d",
\ar@{-}"d";"e",
\ar@{-}"d";"f",
\ar@{-}"d";"g",
\endxy
\end{array}
\quad\circ_2\quad
\begin{array}{c}
\xy/r2pt/:
(0,0)*{}="d",
(0,10)*-<3pt>{\bullet}="e",
%(20,33)*{g},
(-8,20)*{}="h",
%(-10,43)*{h},
(0,20)*{}="i",
%(0,43)*{i},
(8,20)*{}="j",
%(10,43)*{j},
\ar@{-}"d";"e",
\ar@{-}"e";"h",
\ar@{-}"e";"i",
\ar@{-}"e";"j",
\endxy
\end{array}
\quad=\quad
\begin{array}{c}
\xy/r2pt/:
(0,0)*{}="a",
%(0,-3)*{a},
(0,10)*-<3pt>{\bullet}="b",
(-10,20)*{}="c",
%(-10,23)*{c},
(10,20)*-<3pt>{\bullet}="d",
(0,30)*-<3pt>{\bullet}="e",
(10,30)*-{\bullet}="f",
(20,30)*{}="g",
%(20,33)*{g},
(-8,40)*{}="h",
%(-10,43)*{h},
(0,40)*{}="i",
%(0,43)*{i},
(8,40)*{}="j",
%(10,43)*{j},
\ar@{-}"a";"b",
\ar@{-}"b";"c",
\ar@{-}"b";"d",
\ar@{-}"d";"e",
\ar@{-}"d";"f",
\ar@{-}"d";"g",
\ar@{-}"e";"h",
\ar@{-}"e";"i",
\ar@{-}"e";"j",
\endxy
\end{array}
$$
\caption{An example of grafting.}
\label{agrafting}
\end{figure}

The sets $\mathtt T(n)$ of trees with $n$ leaves, with the composition given by grafting and unit $|\in \mathtt T(1)$, form a discrete operad $\mathtt T$.
In fact $\mathtt T$ is the free operad generated by 
the tree of height 1 and 1 cork, 
together with
the trees of height 2 with no corks,
also known as \emph{corollas}.

\begin{figure}[H]
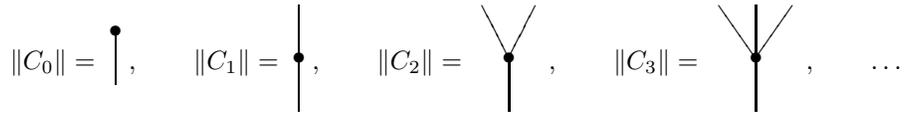

$$\norm{C_{0}}={}\begin{array}{c}
\xy/r2pt/:
(0,0)*{}="a",
%(0,-3)*{a},
(0,10)*-{\bullet}="b",
\ar@{-}"a";"b",
\endxy
\end{array},\qquad
\norm{C_{1}}={}\begin{array}{c}
\xy/r2pt/:
(0,0)*{}="a",
%(0,-3)*{a},
(0,10)*-{\bullet}="b",
(0,20)*{}="c",
\ar@{-}"a";"b",
\ar@{-}"b";"c",
\endxy
\end{array},\qquad
\norm{C_{2}}={}\begin{array}{c}
\xy/r2pt/:
(0,0)*{}="a",
%(0,-3)*{a},
(0,10)*-<3pt>{\bullet}="b",
(-5,20)*{}="c",
(5,20)*{}="d",
\ar@{-}"a";"b",
\ar@{-}"b";"c",
\ar@{-}"b";"d",
\endxy
\end{array},\qquad
\norm{C_{3}}={}\begin{array}{c}
\xy/r2pt/:
(0,0)*{}="a",
%(0,-3)*{a},
(0,10)*-<3pt>{\bullet}="b",
(0,20)*{}="c",
(-7,20)*{}="d",
(7,20)*{}="e",
\ar@{-}"a";"b",
\ar@{-}"b";"c",
\ar@{-}"b";"d",
\ar@{-}"b";"e",
\endxy
\end{array},\qquad\dots$$
\caption{The corollas~$C_{n}$, $n\geq 0$.}
\label{corollas}
\end{figure}
In particular, let $S=\{j_1<\dots<j_m\}\subseteq [n+m]$, where we write $[k]=\{1,\dots,k\}$. 
Then grafting $C_0$ successively at the positions given by $S$ defines a function
\begin{align*}
\mathtt T(n+m)&\longrightarrow \mathtt T(n)\\
T&\longmapsto T^{\bullet S}
:=((T\circ_{j_m}C_0) \circ_{j_{m-1}}C_0) \cdots \circ_{j_1}C_0. 
\end{align*}
We say that $T^{\bullet S}$ is obtained by adding corks to $T$ in the leaves at places $S$.

%Suppose that $T$ and $T'$ have $m$ and $n$ inner edges, respectively. Denote $\tau_{T,T',i}\in S_{n+m}$ of the permutation taking the restriction of the path order $T\circ_i T'$ to $I(T\circ_i T')\setminus \{r(T')\}=I(T)\sqcup Y(T')$ to the order induced by first taking the inner edges in $T$ and then the inner edges in $T'$ with their path order.

Let $T$ be a \emph{binary tree}, that is, a tree in which all vertices have degree $1$ or $3$. If %$T$ has at least three leaves and 
$e=\{u\prec v\}\in E(T)$ is an edge such that $u$ has degree 3 and $v$ has degree $1$, we define the binary tree $T\backslash e$ in such a way that $\norm{T\backslash e}$ is obtained by removing $v$ and $e$ and deleting $u$ in order to join the other two edges $e'$ and $e''$ incident in $u$% (see Figure \ref{tminuse})
.

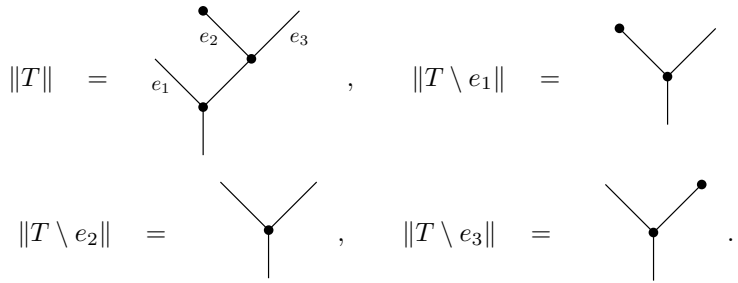
\begin{figure}[H]
$$\norm{T}\quad=\quad\begin{array}{c}           
\scalebox{.8} % Change this value to rescale the drawing.
{\begin{pspicture}(0,-1.3435937)(2.84875,1.3735938)
\psline[linewidth=0.02cm](0.0,0.26640624)(0.8,-0.5335938)
\psline[linewidth=0.02cm](0.8,-0.5335938)(0.8,-1.3335937)
\psline[linewidth=0.02cm](0.8,-0.5335938)(1.6,0.26640624)
\psline[linewidth=0.02cm](1.6,0.26640624)(0.8,1.0664062)
\psline[linewidth=0.02cm](1.6,0.26640624)(2.4,1.0664062)
\psdots[dotsize=0.16](0.8,-0.5335938)
\psdots[dotsize=0.16](1.6,0.26640624)
\psdots[dotsize=0.16](0.8,1.0664062)
\usefont{T1}{ptm}{m}{n}
\rput(0.1,-0.2){$e_1$}
\usefont{T1}{ptm}{m}{n}
\rput(.9,.6){$e_2$}
\usefont{T1}{ptm}{m}{n}
\rput(2.4,0.6){$e_3$}
\end{pspicture} 
}\end{array},
\qquad\norm{T\setminus e_1}\quad=\quad
\begin{array}{c}
\scalebox{.8} % Change this value to rescale the drawing.
{
\begin{pspicture}(0,-0.85)(1.69,0.85)
\psline[linewidth=0.02cm](0.88,-0.84)(0.88,-0.04)
\psline[linewidth=0.02cm](0.88,-0.04)(1.68,0.76)
\psline[linewidth=0.02cm](0.88,-0.04)(0.08,0.76)
\psdots[dotsize=0.16](0.88,-0.04)
\psdots[dotsize=0.16](0.08,0.76)
\end{pspicture} 
} 
\end{array},$$
$$\norm{T\setminus e_2}\quad=\quad
\begin{array}{c}
\scalebox{.8} % Change this value to rescale the drawing.
{
\begin{pspicture}(0,-0.81)(1.61,0.81)
\psline[linewidth=0.02cm](0.8,-0.8)(0.8,0.0)
\psline[linewidth=0.02cm](0.8,0.0)(1.6,0.8)
\psline[linewidth=0.02cm](0.8,0.0)(0.0,0.8)
\psdots[dotsize=0.16](0.8,0.0)
\end{pspicture} 
} 
\end{array}
,\qquad
\norm{T\setminus e_3}\quad=\quad
\begin{array}{c}
\scalebox{.8} % Change this value to rescale the drawing.
{
\begin{pspicture}(0,-0.85)(1.69,0.85)
\psline[linewidth=0.02cm](0.8,-0.84)(0.8,-0.04)
\psline[linewidth=0.02cm](0.8,-0.04)(1.6,0.76)
\psline[linewidth=0.02cm](0.8,-0.04)(0.0,0.76)
\psdots[dotsize=0.16](0.8,-0.04)
\psdots[dotsize=0.16](1.6,0.76)
\end{pspicture} 
}
\end{array}
.$$
\caption{For the tree $T$, we illustrate the operation $T\setminus e_i$ for all edges $e_i$ whose top vertex has degree $1$.}
\label{tminuse}
\end{figure}

The number of binary trees with $n$ leaves and no corks is the Catalan number $C_{n-1}=\frac{1}{n}\binom{2n-2}{n-1}$. Such a tree has $n-2$ inner edges. More generally, there are $\binom{n+m}{m}C_{n+m-1}$ binary trees with $n$ leaves and $m$ corks, and such a tree has $2m+n-2$ inner edges.

%%%%%%%%%%%%%%%%%%%%%%%%%%%%%%%%%%%%%%%%%%%%%%%%%%%%%%%%%%%%%%%%%%%%%%%%
%\ignore{  %%%%%%%%%%%%%%%%%%%%%%%%%%%%%%%%%%%%%%%%%%%%%%%%%%%%%%%%%%%%%
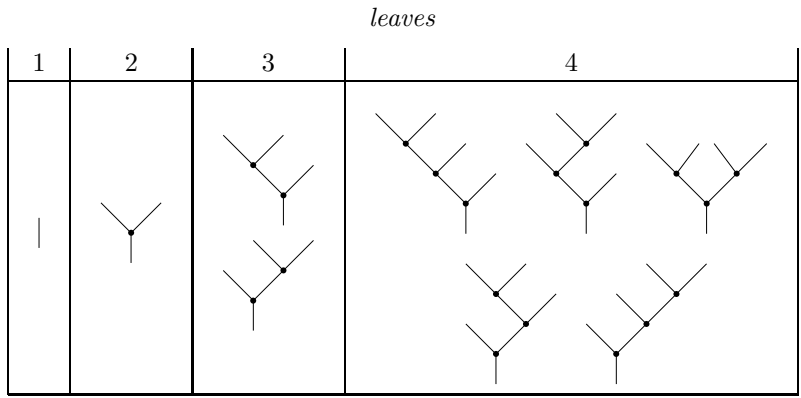
\begin{figure}[H]
$$
\begin{array}{c}
\text{\emph{leaves}}\\[5pt]
\begin{array}{|c|c|c|c|}
1&2&3&4\\
\hline
\begin{array}{c}
\scalebox{.5} % Change this value to rescale the drawing.
{
\begin{pspicture}(0,-0.41)(0.01,0.41)
\psline[linewidth=0.02cm](0.0,-0.4)(0.0,0.4)
\end{pspicture} 
}
\end{array}

&
\begin{array}{c}
\scalebox{.5} % Change this value to rescale the drawing.
{
\begin{pspicture}(0,-0.81)(1.61,0.81)
\psline[linewidth=0.02cm](0.8,-0.8)(0.8,0.0)
\psline[linewidth=0.02cm](0.8,0.0)(1.6,0.8)
\psline[linewidth=0.02cm](0.8,0.0)(0.0,0.8)
\psdots[dotsize=0.16](0.8,0.0)
\end{pspicture} 
}

\end{array}

&
\begin{array}{c}
\scalebox{.5} % Change this value to rescale the drawing.
{
\begin{pspicture}(0,-2.61)(2.41,2.61)
\psline[linewidth=0.02cm](1.6,0.2)(1.6,1.0)
\psline[linewidth=0.02cm](1.6,1.0)(2.4,1.8)
\psline[linewidth=0.02cm](1.6,1.0)(0.8,1.8)
\psline[linewidth=0.02cm](0.8,1.8)(1.6,2.6)
\psline[linewidth=0.02cm](0.8,1.8)(0.0,2.6)
\psline[linewidth=0.02cm](0.0,-1.0)(0.8,-1.8)
\psline[linewidth=0.02cm](0.8,-1.8)(0.8,-2.6)
\psline[linewidth=0.02cm](0.8,-1.8)(1.6,-1.0)
\psline[linewidth=0.02cm](1.6,-1.0)(0.8,-0.2)
\psline[linewidth=0.02cm](1.6,-1.0)(2.4,-0.2)
\psdots[dotsize=0.16](0.8,1.8)
\psdots[dotsize=0.16](1.6,1.0)
\psdots[dotsize=0.16](0.8,-1.8)
\psdots[dotsize=0.16](1.6,-1.0)
\end{pspicture} 
}

\end{array}
&
\begin{array}{c}\\
\scalebox{.5} % Change this value to rescale the drawing.
{
\begin{pspicture}(0,-3.61)(10.41,3.61)
\psline[linewidth=0.02cm](0.0,3.6)(0.8,2.8)
\psline[linewidth=0.02cm](0.8,2.8)(1.6,3.6)
\psline[linewidth=0.02cm](0.8,2.8)(1.6,2.0)
\psline[linewidth=0.02cm](1.6,2.0)(2.4,2.8)
\psline[linewidth=0.02cm](1.6,2.0)(2.4,1.2)
\psline[linewidth=0.02cm](2.4,1.2)(3.2,2.0)
\psline[linewidth=0.02cm](2.4,1.2)(2.4,0.4)
\psline[linewidth=0.02cm](4.0,2.8)(4.8,2.0)
\psline[linewidth=0.02cm](4.8,2.0)(5.6,2.8)
\psline[linewidth=0.02cm](4.8,2.0)(5.6,1.2)
\psline[linewidth=0.02cm](5.6,1.2)(5.6,0.4)
\psline[linewidth=0.02cm](5.6,2.8)(4.8,3.6)
\psline[linewidth=0.02cm](5.6,2.8)(6.4,3.6)
\psline[linewidth=0.02cm](5.6,1.2)(6.4,2.0)
\psline[linewidth=0.02cm](7.2,2.8)(8.0,2.0)
\psline[linewidth=0.02cm](8.0,2.0)(8.6,2.8)
\psline[linewidth=0.02cm](8.0,2.0)(8.8,1.2)
\psline[linewidth=0.02cm](8.8,1.2)(8.8,0.4)
\psline[linewidth=0.02cm](8.8,1.2)(9.6,2.0)
\psline[linewidth=0.02cm](9.6,2.0)(9.0,2.8)
\psline[linewidth=0.02cm](9.6,2.0)(10.4,2.8)
\psline[linewidth=0.02cm](2.4,-2.0)(3.2,-2.8)
\psline[linewidth=0.02cm](3.2,-2.8)(3.2,-3.6)
\psline[linewidth=0.02cm](3.2,-2.8)(4.0,-2.0)
\psline[linewidth=0.02cm](4.0,-2.0)(3.2,-1.2)
\psline[linewidth=0.02cm](4.0,-2.0)(4.8,-1.2)
\psline[linewidth=0.02cm](3.2,-1.2)(4.0,-0.4)
\psline[linewidth=0.02cm](3.2,-1.2)(2.4,-0.4)
\psline[linewidth=0.02cm](5.6,-2.0)(6.4,-2.8)
\psline[linewidth=0.02cm](6.4,-2.8)(6.4,-3.6)
\psline[linewidth=0.02cm](6.4,-2.8)(7.2,-2.0)
\psline[linewidth=0.02cm](7.2,-2.0)(6.4,-1.2)
\psline[linewidth=0.02cm](7.2,-2.0)(8.0,-1.2)
\psline[linewidth=0.02cm](8.0,-1.2)(7.2,-0.4)
\psline[linewidth=0.02cm](8.0,-1.2)(8.8,-0.4)
\psdots[dotsize=0.16](2.4,1.2)
\psdots[dotsize=0.16](1.6,2.0)
\psdots[dotsize=0.16](0.8,2.8)
\psdots[dotsize=0.16](4.8,2.0)
\psdots[dotsize=0.16](5.6,2.8)
\psdots[dotsize=0.16](5.6,1.2)
\psdots[dotsize=0.16](8.0,2.0)
\psdots[dotsize=0.16](8.8,1.2)
\psdots[dotsize=0.16](9.6,2.0)
\psdots[dotsize=0.16](3.2,-1.2)
\psdots[dotsize=0.16](4.0,-2.0)
\psdots[dotsize=0.16](3.2,-2.8)
\psdots[dotsize=0.16](6.4,-2.8)
\psdots[dotsize=0.16](7.2,-2.0)
\psdots[dotsize=0.16](8.0,-1.2)
\end{pspicture} 
}

\end{array}\\ \hline
\end{array}
\end{array}$$
\medskip
\caption{Binary trees with up to four leaves and no corks.}
\label{234}
\end{figure}
%%}%%%%%%%%%%%%%%%%%%%%%%%%%%%%%%%%%%%%%%%%%%%%%%%%%%%%%%%%%%%%%%%%%%%%

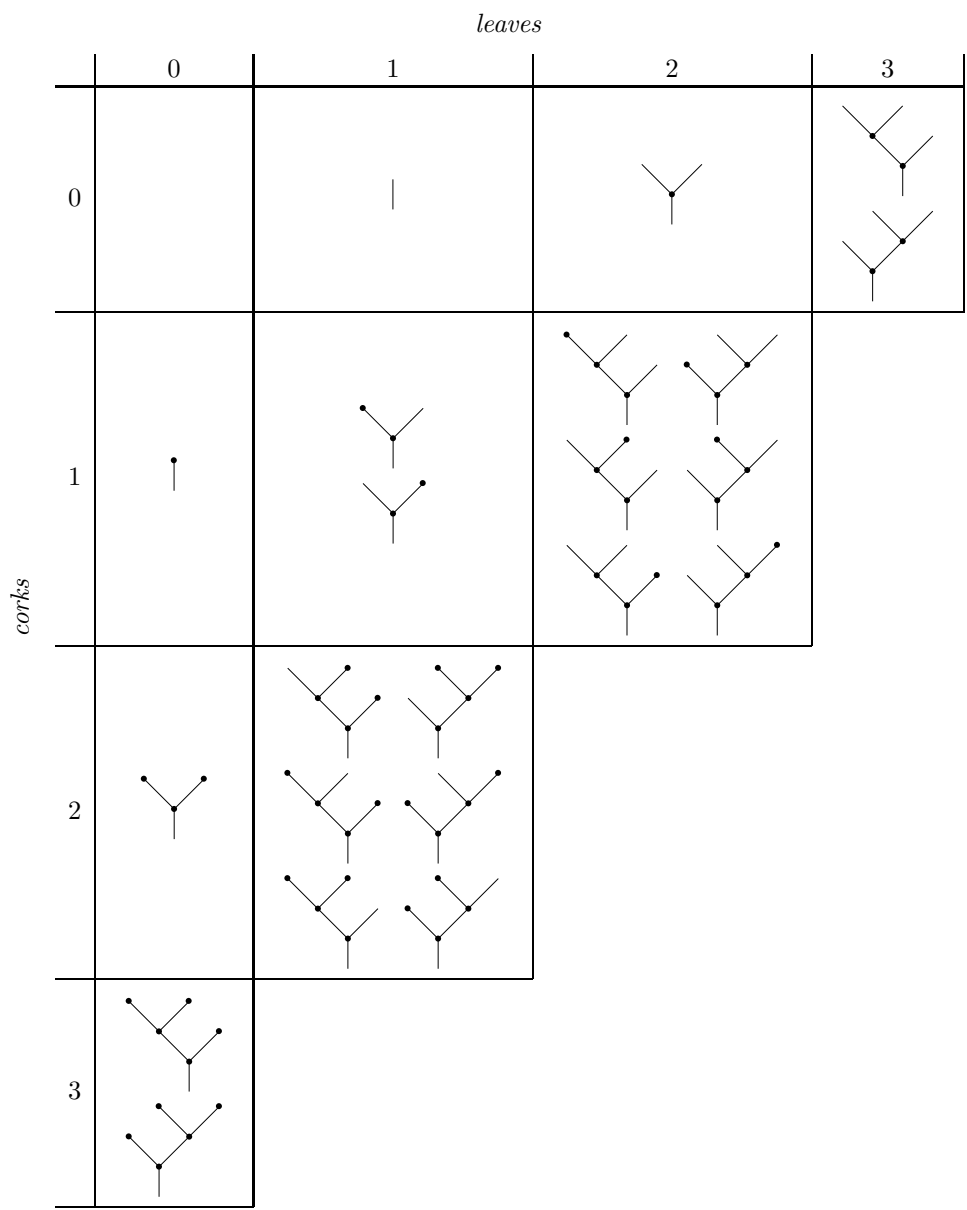
\begin{figure}[H]

$$
\begin{array}{cc}
&\text{\emph{leaves}}\\[5pt]
\begin{sideways}\text{\emph{corks}}\end{sideways}&
\begin{array}{c|c|c|c|c|}
&0&1&2&3\\
\hline 0&&
\begin{array}{c}
\scalebox{.5} % Change this value to rescale the drawing.
{
\begin{pspicture}(0,-0.41)(0.01,0.41)
\psline[linewidth=0.02cm](0.0,-0.4)(0.0,0.4)
\end{pspicture} 
}
\end{array}

&
\begin{array}{c}
\scalebox{.5} % Change this value to rescale the drawing.
{
\begin{pspicture}(0,-0.81)(1.61,0.81)
\psline[linewidth=0.02cm](0.8,-0.8)(0.8,0.0)
\psline[linewidth=0.02cm](0.8,0.0)(1.6,0.8)
\psline[linewidth=0.02cm](0.8,0.0)(0.0,0.8)
\psdots[dotsize=0.16](0.8,0.0)
\end{pspicture} 
}

\end{array}

&
\begin{array}{c}\\[-5pt]
\scalebox{.5} % Change this value to rescale the drawing.
{
\begin{pspicture}(0,-2.61)(2.41,2.61)
\psline[linewidth=0.02cm](1.6,0.2)(1.6,1.0)
\psline[linewidth=0.02cm](1.6,1.0)(2.4,1.8)
\psline[linewidth=0.02cm](1.6,1.0)(0.8,1.8)
\psline[linewidth=0.02cm](0.8,1.8)(1.6,2.6)
\psline[linewidth=0.02cm](0.8,1.8)(0.0,2.6)
\psline[linewidth=0.02cm](0.0,-1.0)(0.8,-1.8)
\psline[linewidth=0.02cm](0.8,-1.8)(0.8,-2.6)
\psline[linewidth=0.02cm](0.8,-1.8)(1.6,-1.0)
\psline[linewidth=0.02cm](1.6,-1.0)(0.8,-0.2)
\psline[linewidth=0.02cm](1.6,-1.0)(2.4,-0.2)
\psdots[dotsize=0.16](0.8,1.8)
\psdots[dotsize=0.16](1.6,1.0)
\psdots[dotsize=0.16](0.8,-1.8)
\psdots[dotsize=0.16](1.6,-1.0)
\end{pspicture} 
}

\end{array}
\\
\hline 1&
\begin{array}{c}
\scalebox{.5} % Change this value to rescale the drawing.
{
\begin{pspicture}(0,-0.445)(0.14,0.455)
\psline[linewidth=0.02cm](0.07,-0.435)(0.07,0.365)
\psdots[dotsize=0.16](0.06,0.375)
\end{pspicture} 
}
\end{array}
&
\begin{array}{c}
\scalebox{.5} % Change this value to rescale the drawing.
{
\begin{pspicture}(0,-1.855)(1.78,1.865)
\psline[linewidth=0.02cm](0.89,0.155)(0.89,0.955)
\psline[linewidth=0.02cm](0.89,0.955)(1.69,1.755)
\psline[linewidth=0.02cm](0.89,0.955)(0.09,1.755)
\psdots[dotsize=0.16](0.89,0.955)
\psline[linewidth=0.02cm](0.89,-1.845)(0.89,-1.045)
\psline[linewidth=0.02cm](0.89,-1.045)(1.69,-0.245)
\psline[linewidth=0.02cm](0.89,-1.045)(0.09,-0.245)
\psdots[dotsize=0.16](0.89,-1.045)
\psdots[dotsize=0.16](0.08,1.765)
\psdots[dotsize=0.16](1.68,-0.235)
\end{pspicture} 
}
\end{array}
&
\begin{array}{c}\\[-5pt]
\scalebox{.5} % Change this value to rescale the drawing.
{
\begin{pspicture}(0,-4.055)(5.78,4.065)
\psline[linewidth=0.02cm](1.69,1.555)(1.69,2.355)
\psline[linewidth=0.02cm](1.69,2.355)(2.49,3.155)
\psline[linewidth=0.02cm](1.69,2.355)(0.89,3.155)
\psline[linewidth=0.02cm](0.89,3.155)(1.69,3.955)
\psline[linewidth=0.02cm](0.89,3.155)(0.09,3.955)
\psline[linewidth=0.02cm](3.29,3.155)(4.09,2.355)
\psline[linewidth=0.02cm](4.09,2.355)(4.09,1.555)
\psline[linewidth=0.02cm](4.09,2.355)(4.89,3.155)
\psline[linewidth=0.02cm](4.89,3.155)(4.09,3.955)
\psline[linewidth=0.02cm](4.89,3.155)(5.69,3.955)
\psdots[dotsize=0.16](0.89,3.155)
\psdots[dotsize=0.16](1.69,2.355)
\psdots[dotsize=0.16](4.09,2.355)
\psdots[dotsize=0.16](4.89,3.155)
\psdots[dotsize=0.16](0.08,3.965)
\psdots[dotsize=0.16](3.28,3.165)
\psline[linewidth=0.02cm](1.69,-1.245)(1.69,-0.445)
\psline[linewidth=0.02cm](1.69,-0.445)(2.49,0.355)
\psline[linewidth=0.02cm](1.69,-0.445)(0.89,0.355)
\psline[linewidth=0.02cm](0.89,0.355)(1.69,1.155)
\psline[linewidth=0.02cm](0.89,0.355)(0.09,1.155)
\psline[linewidth=0.02cm](3.29,0.355)(4.09,-0.445)
\psline[linewidth=0.02cm](4.09,-0.445)(4.09,-1.245)
\psline[linewidth=0.02cm](4.09,-0.445)(4.89,0.355)
\psline[linewidth=0.02cm](4.89,0.355)(4.09,1.155)
\psline[linewidth=0.02cm](4.89,0.355)(5.69,1.155)
\psdots[dotsize=0.16](0.89,0.355)
\psdots[dotsize=0.16](1.69,-0.445)
\psdots[dotsize=0.16](4.09,-0.445)
\psdots[dotsize=0.16](4.89,0.355)
\psdots[dotsize=0.16](1.68,1.165)
\psdots[dotsize=0.16](4.08,1.165)
\psline[linewidth=0.02cm](1.69,-4.045)(1.69,-3.245)
\psline[linewidth=0.02cm](1.69,-3.245)(2.49,-2.445)
\psline[linewidth=0.02cm](1.69,-3.245)(0.89,-2.445)
\psline[linewidth=0.02cm](0.89,-2.445)(1.69,-1.645)
\psline[linewidth=0.02cm](0.89,-2.445)(0.09,-1.645)
\psline[linewidth=0.02cm](3.29,-2.445)(4.09,-3.245)
\psline[linewidth=0.02cm](4.09,-3.245)(4.09,-4.045)
\psline[linewidth=0.02cm](4.09,-3.245)(4.89,-2.445)
\psline[linewidth=0.02cm](4.89,-2.445)(4.09,-1.645)
\psline[linewidth=0.02cm](4.89,-2.445)(5.69,-1.645)
\psdots[dotsize=0.16](0.89,-2.445)
\psdots[dotsize=0.16](1.69,-3.245)
\psdots[dotsize=0.16](4.09,-3.245)
\psdots[dotsize=0.16](4.89,-2.445)
\psdots[dotsize=0.16](2.48,-2.435)
\psdots[dotsize=0.16](5.68,-1.635)
\end{pspicture} 
}
\end{array}\\
\cline{1-4} 2&
\begin{array}{c}
\scalebox{.5} % Change this value to rescale the drawing.
{
\begin{pspicture}(0,-0.855)(1.78,0.865)
\psline[linewidth=0.02cm](0.89,-0.845)(0.89,-0.045)
\psline[linewidth=0.02cm](0.89,-0.045)(1.69,0.755)
\psline[linewidth=0.02cm](0.89,-0.045)(0.09,0.755)
\psdots[dotsize=0.16](0.89,-0.045)
\psdots[dotsize=0.16](0.08,0.765)
\psdots[dotsize=0.16](1.68,0.765)
\end{pspicture} 
}
\end{array}

&

\begin{array}{c}\\[-5pt]
\scalebox{.5} % Change this value to rescale the drawing.
{
\begin{pspicture}(0,-4.055)(5.78,4.065)
\psline[linewidth=0.02cm](1.69,1.555)(1.69,2.355)
\psline[linewidth=0.02cm](1.69,2.355)(2.49,3.155)
\psline[linewidth=0.02cm](1.69,2.355)(0.89,3.155)
\psline[linewidth=0.02cm](0.89,3.155)(1.69,3.955)
\psline[linewidth=0.02cm](0.89,3.155)(0.09,3.955)
\psline[linewidth=0.02cm](3.29,3.155)(4.09,2.355)
\psline[linewidth=0.02cm](4.09,2.355)(4.09,1.555)
\psline[linewidth=0.02cm](4.09,2.355)(4.89,3.155)
\psline[linewidth=0.02cm](4.89,3.155)(4.09,3.955)
\psline[linewidth=0.02cm](4.89,3.155)(5.69,3.955)
\psdots[dotsize=0.16](0.89,3.155)
\psdots[dotsize=0.16](1.69,2.355)
\psdots[dotsize=0.16](4.09,2.355)
\psdots[dotsize=0.16](4.89,3.155)
\psdots[dotsize=0.16](1.68,3.965)
\psdots[dotsize=0.16](2.48,3.165)
\psdots[dotsize=0.16](5.68,3.965)
\psdots[dotsize=0.16](4.08,3.965)
\psline[linewidth=0.02cm](1.69,-1.245)(1.69,-0.445)
\psline[linewidth=0.02cm](1.69,-0.445)(2.49,0.355)
\psline[linewidth=0.02cm](1.69,-0.445)(0.89,0.355)
\psline[linewidth=0.02cm](0.89,0.355)(1.69,1.155)
\psline[linewidth=0.02cm](0.89,0.355)(0.09,1.155)
\psline[linewidth=0.02cm](3.29,0.355)(4.09,-0.445)
\psline[linewidth=0.02cm](4.09,-0.445)(4.09,-1.245)
\psline[linewidth=0.02cm](4.09,-0.445)(4.89,0.355)
\psline[linewidth=0.02cm](4.89,0.355)(4.09,1.155)
\psline[linewidth=0.02cm](4.89,0.355)(5.69,1.155)
\psdots[dotsize=0.16](0.89,0.355)
\psdots[dotsize=0.16](1.69,-0.445)
\psdots[dotsize=0.16](4.09,-0.445)
\psdots[dotsize=0.16](4.89,0.355)
\psdots[dotsize=0.16](0.08,1.165)
\psdots[dotsize=0.16](2.48,0.365)
\psdots[dotsize=0.16](5.68,1.165)
\psdots[dotsize=0.16](3.28,0.365)
\psline[linewidth=0.02cm](1.69,-4.045)(1.69,-3.245)
\psline[linewidth=0.02cm](1.69,-3.245)(2.49,-2.445)
\psline[linewidth=0.02cm](1.69,-3.245)(0.89,-2.445)
\psline[linewidth=0.02cm](0.89,-2.445)(1.69,-1.645)
\psline[linewidth=0.02cm](0.89,-2.445)(0.09,-1.645)
\psline[linewidth=0.02cm](3.29,-2.445)(4.09,-3.245)
\psline[linewidth=0.02cm](4.09,-3.245)(4.09,-4.045)
\psline[linewidth=0.02cm](4.09,-3.245)(4.89,-2.445)
\psline[linewidth=0.02cm](4.89,-2.445)(4.09,-1.645)
\psline[linewidth=0.02cm](4.89,-2.445)(5.69,-1.645)
\psdots[dotsize=0.16](0.89,-2.445)
\psdots[dotsize=0.16](1.69,-3.245)
\psdots[dotsize=0.16](4.09,-3.245)
\psdots[dotsize=0.16](4.89,-2.445)
\psdots[dotsize=0.16](0.08,-1.635)
\psdots[dotsize=0.16](1.68,-1.635)
\psdots[dotsize=0.16](4.08,-1.635)
\psdots[dotsize=0.16](3.28,-2.435)
\end{pspicture} 
}

\end{array}
\\
\cline{1-3} 3&
\begin{array}{c}\\[-5pt]
\scalebox{.5} % Change this value to rescale the drawing.
{
\begin{pspicture}(0,-2.655)(2.58,2.665)
\psline[linewidth=0.02cm](1.69,0.155)(1.69,0.955)
\psline[linewidth=0.02cm](1.69,0.955)(2.49,1.755)
\psline[linewidth=0.02cm](1.69,0.955)(0.89,1.755)
\psline[linewidth=0.02cm](0.89,1.755)(1.69,2.555)
\psline[linewidth=0.02cm](0.89,1.755)(0.09,2.555)
\psline[linewidth=0.02cm](0.09,-1.045)(0.89,-1.845)
\psline[linewidth=0.02cm](0.89,-1.845)(0.89,-2.645)
\psline[linewidth=0.02cm](0.89,-1.845)(1.69,-1.045)
\psline[linewidth=0.02cm](1.69,-1.045)(0.89,-0.245)
\psline[linewidth=0.02cm](1.69,-1.045)(2.49,-0.245)
\psdots[dotsize=0.16](0.89,1.755)
\psdots[dotsize=0.16](1.69,0.955)
\psdots[dotsize=0.16](0.89,-1.845)
\psdots[dotsize=0.16](1.69,-1.045)
\psdots[dotsize=0.16](0.08,2.565)
\psdots[dotsize=0.16](1.68,2.565)
\psdots[dotsize=0.16](2.48,1.765)
\psdots[dotsize=0.16](2.48,-0.235)
\psdots[dotsize=0.16](0.88,-0.235)
\psdots[dotsize=0.16](0.08,-1.035)
\end{pspicture} 
}

\end{array}\\
\cline{1-2} 

\end{array}
\end{array}$$

\bigskip

\caption{Binary trees such that the number of leaves plus the number of corks does not exceed three.}
\label{23}
\end{figure}

%$e'=\{u,v\},e=\{v,w_1\},e''=\{v,w_2\}\in E(T)$ and $w_1\in L(T)$ is a leaf, we denote $T\backslash e$ the tree with $V(T\backslash e)=V(T)\setminus\{v,w_1\}$ and
%$E(T\backslash e)=(E(T)\setminus\{e,e'\})\cup\{\{u,w_2\}\}$. i.e. . . %If both $e$ and $e'$ are inned edges we denote $\rho_{T,e}\in S_{n-3}$ the permutation sending the restriction of the path order in $T$ to $I(T)\setminus\{e',e''\}=I(T\backslash e)\setminus\{e'\cup e''\}$ to the restriction of the path order in $T\backslash e$. %If only $e'$ is internal

\section{Associahedra}

We present here the classical associahedra, following first the Boardman--Vogt \emph{cubical} construction \cite{hiasts}, and then describing the polytope cellular structure.

Recall that the positive and negative \emph{faces} of the hypercube $[0,1]^n$ are, $1\leq i\leq n$,
\begin{align*}
\partial_i^-\colon [0,1]^{n-1}&\To[0,1]^n,\\
(x_1,\dots,x_{n-1})&\;\mapsto\;(x_1,\dots,x_{i-1},0,x_{i},\dots,x_{n-1}),\\
\partial_i^+\colon [0,1]^{n-1}&\To[0,1]^n,\\
(x_1,\dots,x_{n-1})&\;\mapsto\;(x_1,\dots,x_{i-1},1,x_{i},\dots,x_{n-1}).
\end{align*}
The \emph{degeneracies} are, $1\leq i\leq n$,
\begin{align*}
\pi_i\colon [0,1]^{n}&\To[0,1]^{n-1},\\
(x_1,\dots,x_{n})&\;\mapsto\;(x_1,\dots,x_{i-1},x_{i+1},\dots,x_{n}).
\end{align*}
The \emph{connections} are, $1\leq i< n$,
\begin{align*}
\gamma_i\colon [0,1]^{n}&\To[0,1]^{n-1},\\
(x_1,\dots,x_{n})&\;\mapsto\;(x_1,\dots,x_{i-1},\max(x_i,x_{i+1}),x_{i+2},\dots,x_{n}).
\end{align*}
Given $s,t\geq 0$ and $1\leq i\leq s+1$ we denote:
\begin{align}
\label{sigma} \sigma_{
%s,t,
i}\colon [0,1]^s\times[0,1]^t&\st{\cong}\To [0,1]^{s+t},\\\nonumber
(x_1,\dots,x_s,y_1,\dots, y_t)&\;\mapsto\;(x_1,\dots,x_{i-1},y_1,\dots, y_t,x_i,\dots,x_s).
\end{align}

Now for any binary tree $T$ with $n$ leaves and no corks, consider the $(n-2)$-cube $$H_T=[0,1]^{n-2}.$$ 
We regard $(x_1,\dots, x_{n-2})\in H_T$ as a labeling of $\norm{T}$ with the $i^{\text{th}}$ inner edge labeled by $x_i$:
$$\norm{T}\quad=\quad\begin{array}{c}
\xy/r1.3pt/:
(0,0)*{}="a",
%(0,-3)*{a},
(0,10)*-<3pt>{\bullet}="b",
(-10,20)*{}="c",
%(-10,23)*{c},
(10,20)*-<3pt>{\bullet}="d",
(0,30)*-<3pt>{\bullet}="e",
(20,30)*{}="g",
%(20,33)*{g},
(-8,40)*{}="h",
%(-10,43)*{h},
%(0,43)*{i},
(8,40)*{}="j",
%(10,43)*{j},
\ar@{-}"a";"b",
\ar@{-}"b";"c",
\ar@{-}"b";"d",
\ar@{-}"d";"e",
\ar@{-}"d";"g",
\ar@{-}"e";"h",
\ar@{-}"e";"j",
\endxy
\end{array},
\qquad\qquad\qquad
\begin{array}{c}
\xy/r1.3pt/:
(0,0)*{}="a",
%(0,-3)*{a},
(0,10)*-<3pt>{\bullet}="b",
(-10,20)*{}="c",
%(-10,23)*{c},
(10,20)*-<3pt>{\bullet}="d",
(8,12)*{\scriptstyle x_1},
(0,30)*-<3pt>{\bullet}="e",
(2,23)*{\scriptstyle x_2},
(20,30)*{}="g",
%(20,33)*{g},
(-8,40)*{}="h",
%(-10,43)*{h},
%(0,43)*{i},
(8,40)*{}="j",
%(10,43)*{j},
\ar@{-}"a";"b",
\ar@{-}"b";"c",
\ar@{-}"b";"d",
\ar@{-}"d";"e",
\ar@{-}"d";"g",
\ar@{-}"e";"h",
\ar@{-}"e";"j",
\endxy
\end{array}= \quad(x_1,x_2)\in H_T.
$$
The \emph{associahedron} $K_n$ is then defined as the cell complex 
$$
\left.
\coprod_{\begin{array}{c}\\[-5mm]\text{\scriptsize binary trees $T$}\\[-1.5mm]\text{\scriptsize with $n$ leaves}
\\[-1.5mm]\text{\scriptsize and no corks}\end{array}}\!\!\!\!\!\!
H_T
\;
\right/
{\vphantom y}_{{\vphantom y}_{\displaystyle\sim\quad.}}
$$
Here $\sim$ is defined as follows. Given two  binary trees $T$ and $T'$ with $n$ leaves and no corks, if  $T/e_i=T'/e_j'$, where $e_{i}$ (resp.\ $e_{i}'$) denotes the $i^{\text{th}}$ inner edge of $T$ (resp.~$T'$), then 
identify the $i^{\text{th}}$ negative face of $H_T$ with the $j^{\text{th}}$ negative face of $H_{T'}$,
%according to the following maps
%$$\xymatrix@C=40pt{H_{T/e_i}=H_{T'/e'_j}\ar[r]^-{\partial^-_j\sigma_{T',e_j'}}\ar@<-5.5ex>[d]_{\partial^-_i\sigma_{T,e_i}}&H_{T'}\\\hspace{-45pt}H_T}$$

\begin{equation}\label{asociativo}
\begin{array}{c}
\xy/r1.3pt/:
(0,0)*{}="a",
(0,10)*-<3pt>{\bullet}="b",
(10,20)*{}="c",
(10,30)*{}="cc",
(-10,20)*-<3pt>{\bullet}="d",
(-7,13)*{\scriptstyle 0},
(0,30)*{}="e",
(0,40)*{}="ee",
(-20,30)*{}="g",
(-20,40)*{}="gg",
\ar@{.}"a";"b",
\ar@{.}"c";"cc",
\ar@{.}"e";"ee",
\ar@{.}"g";"gg",
\ar@{-}"b";"c",
\ar@{-}"b";"d",
\ar@{-}"d";"e",
\ar@{-}"d";"g",
\endxy
\end{array}
\qquad\sim\qquad\begin{array}{c}
\xy/r1.3pt/:
(0,0)*{}="a",
(0,10)*-<3pt>{\bullet}="b",
(-10,20)*{}="c",
(-10,30)*{}="cc",
(10,20)*-<3pt>{\bullet}="d",
(7,13)*{\scriptstyle 0},
(0,30)*{}="e",
(0,40)*{}="ee",
(20,30)*{}="g",
(20,40)*{}="gg",
\ar@{.}"a";"b",
\ar@{.}"c";"cc",
\ar@{.}"e";"ee",
\ar@{.}"g";"gg",
\ar@{-}"b";"c",
\ar@{-}"b";"d",
\ar@{-}"d";"e",
\ar@{-}"d";"g",
\endxy
\end{array}.
\end{equation}
Heuristically, the label indicates the length of the edge, so a length zero edge should be collapsed to a point, hence the identification. %We adopt the conventions that $K_1=*$ and $K_0=\varnothing$.

\bigskip

\begin{figure}[H]
$$K_0=\varnothing\;,\qquad
\begin{array}{cc}\\[-24pt]&|\\ K_{1}=&{\bullet}\end{array}, \qquad
\begin{array}{cc}\\[-29pt]&\includegraphics[scale=.1]{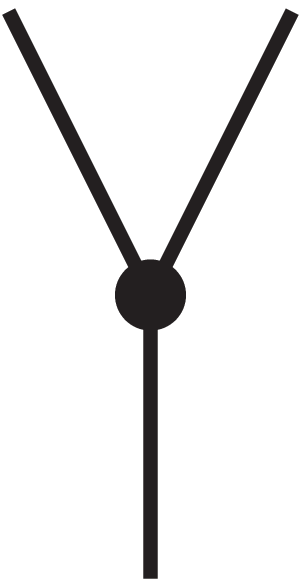}\\[-3pt] K_{2}=&{\bullet}\end{array}, $$

$$
K_{3}=\begin{array}{c}\\[-35pt]
\begin{array}{ccc}
\includegraphics[scale=.1]{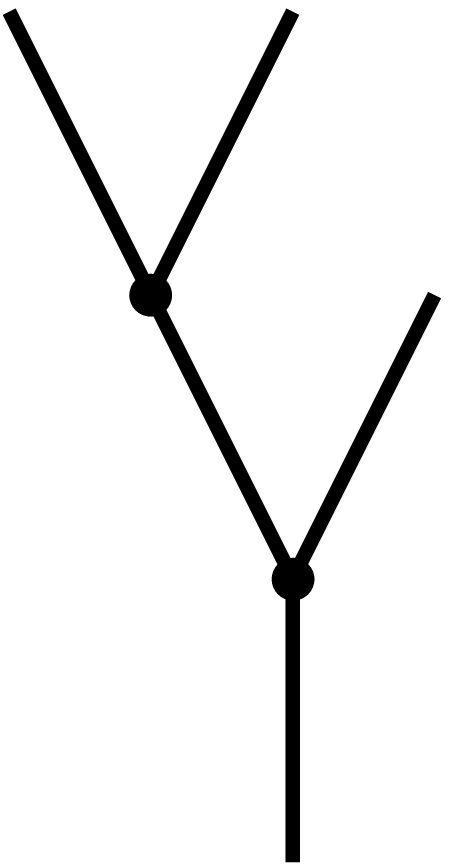}\hspace{0pt}&
\includegraphics[scale=.08]{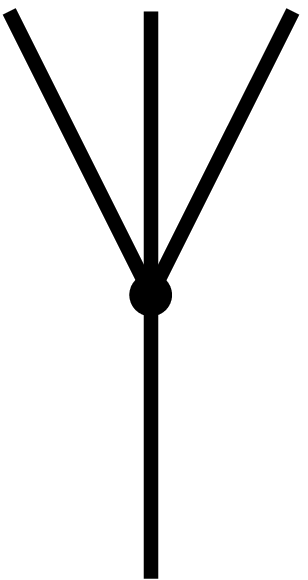}\hspace{0pt}&
\includegraphics[scale=.1]{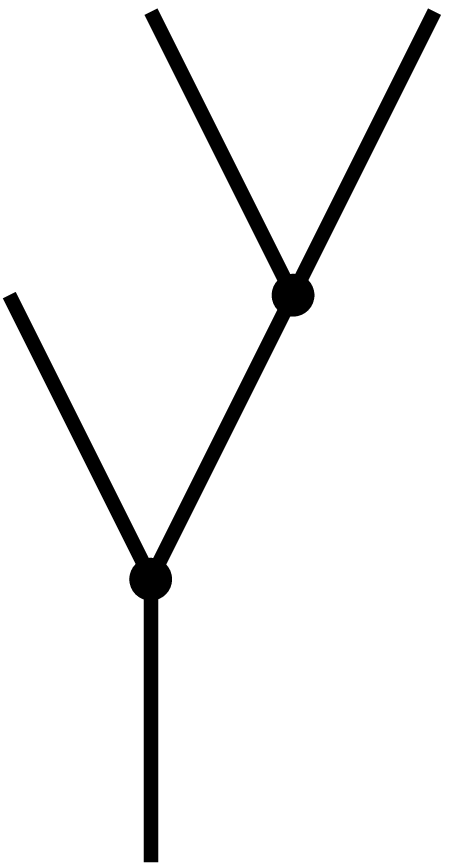}
\end{array}\\[-5pt]
\includegraphics[scale=.2]{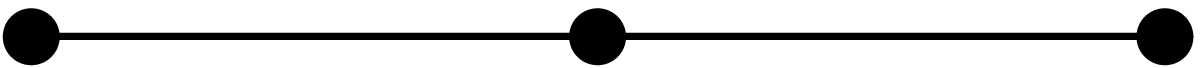}
\end{array}\;,\qquad
\qquad K_{4}=\begin{array}{c}\includegraphics[scale=.5]{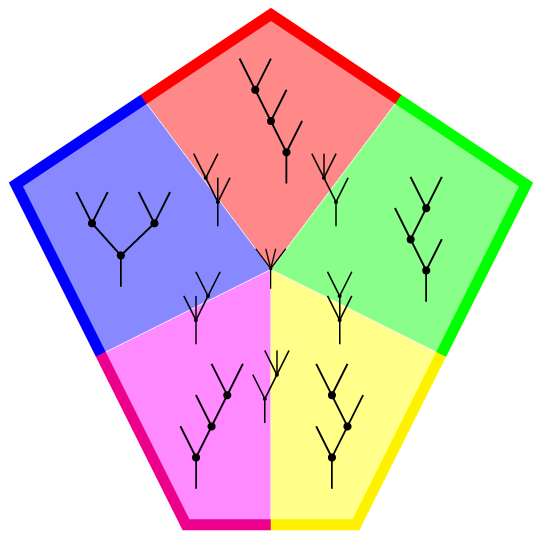} % para B&W poner aqu' k5grey.eps
\end{array}.$$
\caption{The first five associahedra with the cellular structure given by binary trees with no corks. On common faces we depict the non-binary tree which yields the identification.}
\end{figure}

The  \emph{topological $A$-infinity operad} $\mathtt{A}_\infty$ is given by the associahedra $\mathtt{A}_\infty(n)=K_n$, $n\geq 0$. One defines the composition law
$$\circ_i\colon K_p\times K_q\To K_{p+q-1},\quad 1\leq i\leq p,$$
on cells  $H_T\times H_{T'}$ (where $T$ and $T'$ have $p$ and $q$ leaves, respectively, and no corks)  
by
$$
\xymatrix@C=22pt{H_T\times H_{T'}=[0,1]^{p-2}\times[0,1]^{q-2}\ar[r]^-{\sigma_{
%p-2,q-2,
j}}& [0,1]^{p+q-4}\ar[r]^-{\partial^+_{j}}& [0,1]^{p+q-3}=H_{T\circ_iT'}}
$$
if %$e_{j}\in I(T\circ_i T')$ is 
the identification of the root edge of $T'$ with the $i^{\text{th}}$ leaf edge of $T$
 is the $j^{\text{th}}$ inner edge of $T\circ_{i}T'$.

We can illustrate the composition graphically:
$$
\begin{array}{c}
\xy/r1.3pt/:
(0,0)*{}="a",
(0,10)*-<3pt>{\bullet}="b",
(-10,20)*-<3pt>{\bullet}="c",
(10,20)*-<3pt>{\bullet}="d",
(-8,12)*{\scriptstyle x_1},
(8,12)*{\scriptstyle x_2},
(4,30)*{}="e",
(16,30)*{}="g",
(-16,30)*{}="ee",
(-4,30)*{}="gg",
\ar@{-}"a";"b",
\ar@{-}"b";"c",
\ar@{-}"b";"d",
\ar@{-}"d";"e",
\ar@{-}"d";"g",
\ar@{-}"c";"ee",
\ar@{-}"c";"gg",
\endxy
\end{array}
\circ_2
\begin{array}{c}
\xy/r1.3pt/:
(0,0)*{}="a",
(0,10)*-<3pt>{\bullet}="b",
(-6,20)*{}="c",
(6,20)*-<3pt>{\bullet}="d",
(8,14)*{\scriptstyle y_1},
(0,30)*-<3pt>{\bullet}="e",
(0,23)*{\scriptstyle y_2},
(12,30)*{}="g",
(-6,40)*{}="h",
(6,40)*{}="j",
\ar@{-}"a";"b",
\ar@{-}"b";"c",
\ar@{-}"b";"d",
\ar@{-}"d";"e",
\ar@{-}"d";"g",
\ar@{-}"e";"h",
\ar@{-}"e";"j",
\endxy
\end{array}
\quad=\quad
\begin{array}{c}
\xy/r1.3pt/:
(0,0)*{}="a",
(0,10)*-<3pt>{\bullet}="b",
(-10,20)*-<3pt>{\bullet}="c",
(10,20)*-<3pt>{\bullet}="d",
(-8,12)*{\scriptstyle x_1},
(8,12)*{\scriptstyle x_2},
(4,30)*{}="e",
(16,30)*{}="g",
(-16,30)*{}="ee",
(-4,30)*{}="gg",
(-4,30)*-<3pt>{\bullet}="bbb",
(-10,40)*{}="ccc",
(2,40)*-<3pt>{\bullet}="ddd",
(4,34)*{\scriptstyle y_1},
(-4,24)*{\scriptstyle 1},
(-4,50)*-<3pt>{\bullet}="eee",
(-4,43)*{\scriptstyle y_2},
(8,50)*{}="ggg",
(-10,60)*{}="hhh",
(2,60)*{}="jjj",
\ar@{-}"bbb";"ccc",
\ar@{-}"bbb";"ddd",
\ar@{-}"ddd";"eee",
\ar@{-}"ddd";"ggg",
\ar@{-}"eee";"hhh",
\ar@{-}"eee";"jjj",
\ar@{-}"a";"b",
\ar@{-}"b";"c",
\ar@{-}"b";"d",
\ar@{-}"d";"e",
\ar@{-}"d";"g",
\ar@{-}"c";"ee",
\ar@{-}"c";"gg",
\endxy
\end{array}\quad=\quad(x_1,1,y_1,y_2,x_2).
$$
Thus composition is grafting of trees, taking labels into account, and labeling by 1 the newly created inner edge.

If one forgets the cubical subdivision, the associahedron $K_n$ can be represented as an $(n-2)$-dimensional convex polytope% with $(n-3)$-faces $K_p\circ_i K_q$, $p+q-1=n$, $1\leq i\leq p$
.
The poset of faces of $K_n$ under inclusion may be identified with the category of planted planar trees with $n$ leaves, no corks and no degree 2 vertices. Inclusion of faces corresponds to collapse maps $T\to T/e$. We may write $K_T$ for the codimension $r$ face of $K_n$ corresponding to a tree $T$ with $n$ leaves and $r$ inner edges. 
The unique $(n-2)$-cell of $K_n$ is the corolla $C_n$, the codimension 1 faces $K_p\circ_i K_q$ are the trees with a unique internal edge $e=\{v_1,v_{i+1}\}$, and more generally
$$
%\partial(K_T)=\bigcup_{T_2\circ_i T_1\to T}
K_{T}\circ_i K_{T'}=
%\bigcup_{T_2\circ_i T_1\to T} 
K_{T\circ_i T'}. $$
%$$
%K_2\circ_2K_3\qquad=\quad\begin{array}{c}\xy/r2pt/:
%(0,0)*{}="a",
%%(0,-3)*{a},
%(0,10)*-<3pt>{\bullet}="b",
%(0,14)*{v_1},
%(-10,20)*{}="c",
%%(-10,23)*{c},
%(10,20)*-<3pt>{\bullet}="d",
%(14,18)*{v_3},
%(0,30)*{}="e",
%%(-2,28)*{v_4},
%(10,30)*{}="f",
%%(10,34)*{v_8},
%(20,30)*{}="g",
%%(20,33)*{g},
%%(-8,40)*{}="h",
%%(-10,43)*{h},
%%(0,40)*{}="i",
%%(0,43)*{i},
%%(8,40)*{}="j",
%%(10,43)*{j},
%\ar@{-}"a";"b",
%\ar@{-}"b";"c",
%\ar@{-}"b";"d",
%\ar@{-}"d";"e",
%\ar@{-}"d";"f",
%\ar@{-}"d";"g",
%%\ar@{-}"e";"h",
%%\ar@{-}"e";"i",
%%\ar@{-}"e";"j",
%\endxy\end{array}\quad\subset \qquad K_4
%$$

\bigskip

\begin{figure}[H]
$$K_0=\varnothing\;,\qquad
\begin{array}{cc}\\[-24pt]&|\\ K_{1}=&{\bullet}\end{array}, \qquad
\begin{array}{cc}\\[-29pt]&\includegraphics[scale=.1]{2sin.eps}\\[-3pt] K_{2}=&{\bullet}\end{array}, $$

$$
K_{3}=\begin{array}{c}\\[-35pt]
\begin{array}{ccc}
\includegraphics[scale=.07]{ito0sin.eps}\hspace{14pt}&
\includegraphics[scale=.13]{ito12.eps}\hspace{14pt}&
\includegraphics[scale=.07]{ito1sin.eps}
\end{array}\\[-5pt]
\includegraphics[scale=.2]{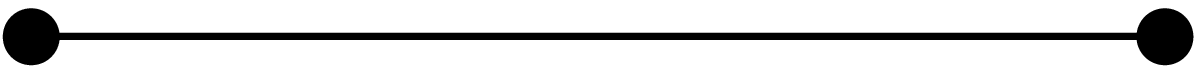}
\end{array},\qquad
\qquad K_{4}=\begin{array}{c}\includegraphics[scale=.5]{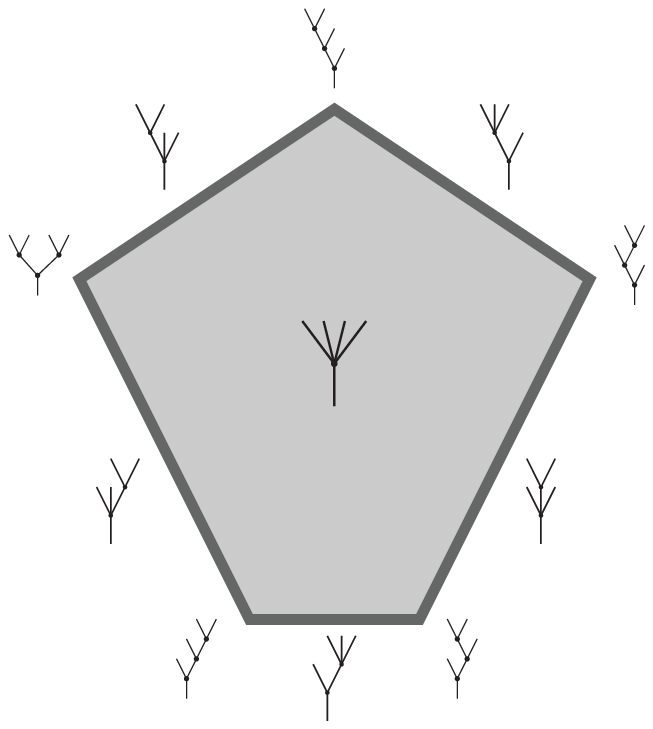}
\end{array}.$$
\caption{The first five associahedra $K_n$ as polytopes. We depict the faces $K_T$ simply by $T$. }
\end{figure}

The \emph{topological strictly unital $A$-infinity operad} $\mathtt{suA}_\infty$ is given by  $\mathtt{suA}_\infty(0)=\bullet$ and $\mathtt{suA}_\infty(n)=\mathtt{A}_\infty(n)=K_n$ if $n\geq 1$. The composition law
$$\circ_i\colon\mathtt{suA}_\infty(p)\times \mathtt{suA}_\infty(q)\To \mathtt{suA}_\infty(p+q-1)$$
is defined as in $\mathtt{A}_\infty$ except when $q=0$. If $p> 2$ and $q=0$, the map,
\begin{equation}\label{degeneracy}
\circ_i\colon K_p\To K_{p-1},\quad 1\leq i\leq p,
\end{equation}
also called associahedral \emph{degeneracy}, 
is defined on cells as follows. Suppose $T$ is a binary tree with $p$ leaves and no corks, $e$ is its $i^{\text{th}}$ leaf edge, and $e'\prec e''$ are the two edges adjacent to $e$.
%, $e''$ bigger than $e'$. One of them must be an inner edge. Assume that $e'$ is the $j^{\text{th}}$ inner edge. 
As illustrated in Figure \ref{circideg}, we distinguish three cases:
\begin{itemize}
\item If $e'$ is the $j^{\text{th}}$ inner edge and $e''$ is also inner, then $\circ_i$ is defined on the cell $H_T$ via the connection
$$H_T\st{\gamma_j}\To H_{T\backslash e}.$$
\item In the following two cases:
\begin{itemize}
\item $e'$ is the root and $e''$ is $j^{\text{th}}$ inner edge, $j=1$,
\item $e''$ is a leaf and $e'$ is $j^{\text{th}}$ inner edge, 
\end{itemize}
the map $\circ_i$ is defined on $H_T$ via the degeneracy
$$H_T\st{\pi_j}\To H_{T\backslash e}.$$
\end{itemize}

\begin{figure}[H]
$$
\scalebox{.8} % Change this value to rescale the drawing.
{
\begin{pspicture}(0,-1.61)(9.009063,1.61)
\psline[linewidth=0.02cm](1.17,-0.8)(1.17,0.0)
\psline[linewidth=0.02cm](1.17,0.0)(1.97,0.8)
\psline[linewidth=0.02cm](1.17,0.0)(0.37,0.8)
\psdots[dotsize=0.16](1.17,0.0)
\psdots[dotsize=0.16](1.97,0.8)
\psdots[dotsize=0.16](1.17,-0.8)
\psline[linewidth=0.02cm,linestyle=dotted,dotsep=0.08cm](1.97,0.8)(1.97,1.6)
\psline[linewidth=0.02cm,linestyle=dotted,dotsep=0.08cm](1.17,-0.8)(1.17,-1.6)
\usefont{T1}{ptm}{m}{n}
\rput(0.7,.2){$e$}
\usefont{T1}{ptm}{m}{n}
\rput(1.7,0.2){$e''$}
\usefont{T1}{ptm}{m}{n}
\rput(1.4,-0.3){$e'$}
\psline[linewidth=0.02cm](4.37,-0.8)(4.37,0.0)
\psline[linewidth=0.02cm](4.37,0.0)(5.17,0.8)
\psline[linewidth=0.02cm](4.37,0.0)(3.57,0.8)
\psdots[dotsize=0.16](4.37,0.0)
\psdots[dotsize=0.16](5.17,0.8)
\psline[linewidth=0.02cm,linestyle=dotted,dotsep=0.08cm](5.17,0.8)(5.17,1.6)
\usefont{T1}{ptm}{m}{n}
\rput(3.9,.2){$e$}
\usefont{T1}{ptm}{m}{n}
\rput(4.9,.2){$e''$}
\usefont{T1}{ptm}{m}{n}
\rput(4.6,-0.3){$e'$}
\psline[linewidth=0.02cm](7.57,-0.8)(7.57,0.0)
\psline[linewidth=0.02cm](7.57,0.0)(8.37,0.8)
\psline[linewidth=0.02cm](7.57,0.0)(6.77,0.8)
\psdots[dotsize=0.16](7.57,0.0)
\psdots[dotsize=0.16](7.57,-0.8)
\psline[linewidth=0.02cm,linestyle=dotted,dotsep=0.08cm](7.57,-0.8)(7.57,-1.6)
\usefont{T1}{ptm}{m}{n}
\rput(7.0,.2){$e$}
\usefont{T1}{ptm}{m}{n}
\rput(8.1,.2){$e''$}
\usefont{T1}{ptm}{m}{n}
\rput(7.8,-0.3){$e'$}
\end{pspicture} 
}$$
\caption{The three possible cases in the definition of the degeneracy \eqref{degeneracy} on cells, up to symmetries.}
\label{circideg}
\end{figure}
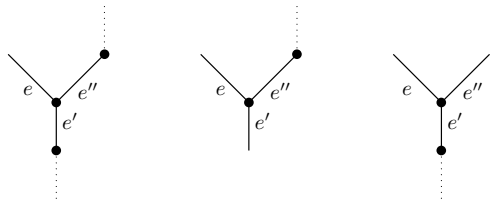

\begin{figure}[H]
$$
\begin{array}{c}
\includegraphics[scale=.3]{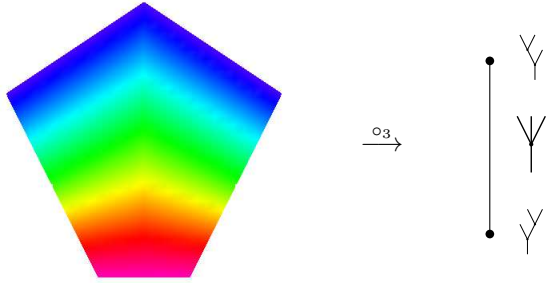}   % Para B&W poner aqu' degene4grey.eps
\end{array}\qquad\st{\circ_{3}}\To\qquad
\begin{array}{cc}\\[0pt]
\includegraphics[scale=.2,angle=90]{interval4.eps}\hspace{-10pt}&
\begin{array}{c}\\[-75pt]
\includegraphics[scale=.07]{ito0sin.eps}\\[10pt]
\includegraphics[scale=.13]{ito12.eps}\\[10pt]
\includegraphics[scale=.07]{ito1sin.eps}
\end{array}
\end{array}$$
\caption{The associahedral degeneracy $\circ_{3}\colon K_{4}\rightarrow K_{3}$. The colour gradient represents the fibers.}
\label{circideg2}
\end{figure}

\section{Associahedra with units}

We now introduce the unital associahedra, starting with a cubical structure and then with a smaller number of cells generalizing the polytope structure of the classical non-unital associahedra.

The \emph{unital associahedron} $$K^u_n$$ is the cell complex defined as follows. For any binary tree $T$ with $n$ leaves and $m$ corks take the $(2m+n-2)$-cube
$$H_T=[0,1]^{2m+n-2}$$
with the convention $H_{T}=\bullet$ if the exponent is negative%$m=0$ and $n=1$
, i.e.\ if $\norm{T}=|$. 

We regard $(x_1,\dots, x_{2m+n-2})\in H_T$ as a labeling of $\norm{T}$ with the $i^{\text{th}}$ inner edge labeled by $x_i$,

$$\norm{T}\quad=\quad\begin{array}{c}
\xy/r1.3pt/:
(0,0)*{}="a",
%(0,-3)*{a},
(0,10)*-<3pt>{\bullet}="b",
(-10,20)*{}="c",
%(-10,23)*{c},
(10,20)*-<3pt>{\bullet}="d",
(0,30)*-<3pt>{\bullet}="e",
(20,30)*{}="g",
%(20,33)*{g},
(-8,40)*{}="h",
%(-10,43)*{h},
%(0,43)*{i},
(8,40)*-<3pt>{\bullet}="j",
%(10,43)*{j},
\ar@{-}"a";"b",
\ar@{-}"b";"c",
\ar@{-}"b";"d",
\ar@{-}"d";"e",
\ar@{-}"d";"g",
\ar@{-}"e";"h",
\ar@{-}"e";"j",
\endxy
\end{array},
\qquad\qquad\qquad
\begin{array}{c}
\xy/r1.3pt/:
(0,0)*{}="a",
%(0,-3)*{a},
(0,10)*-<3pt>{\bullet}="b",
(-10,20)*{}="c",
%(-10,23)*{c},
(10,20)*-<3pt>{\bullet}="d",
(8,12)*{\scriptstyle x_1},
(8,32)*{\scriptstyle x_3},
(0,30)*-<3pt>{\bullet}="e",
(2,23)*{\scriptstyle x_2},
(20,30)*{}="g",
%(20,33)*{g},
(-8,40)*{}="h",
%(-10,43)*{h},
%(0,43)*{i},
(8,40)*-<3pt>{\bullet}="j",
%(10,43)*{j},
\ar@{-}"a";"b",
\ar@{-}"b";"c",
\ar@{-}"b";"d",
\ar@{-}"d";"e",
\ar@{-}"d";"g",
\ar@{-}"e";"h",
\ar@{-}"e";"j",
\endxy
\end{array}= (x_1,x_2,x_3)\in H_T.
$$

The cell complex $K^u_n$ is the quotient of the disjoint union 
$$\coprod_{\begin{array}{c}\\[-5mm]\text{\scriptsize binary trees $T$}\\[-1.5mm]\text{\scriptsize with $n$ leaves}\end{array}}H_T$$
%indexed by the binary trees with $n$ leaves
by the equivalence relation $\sim$ generated by the following relations: 
Consider two binary trees $T$ and $T'$ with $n$ leaves and $m$ corks. Denote
$e_{i}$ (resp.\ $e_{i}'$) the $i^{\text{th}}$ inner edge of $T$ (resp.~$T'$).

\begin{itemize}
 \item[(R1)] If 
$e_i$ and $e'_j$ do not contain corks and $T/e_i=T'/e_j'$, then $\sim$ 
identifies the $i^{\text{th}}$ negative face of $H_T$ with the $j^{\text{th}}$ negative face of $H_{T'}$, i.e.\ 
$\partial^-_i(x)\sim \partial^-_j(x)$ 
for all $x\in [0,1]^{2m+n-3}$. See \eqref{asociativo} for an illustration.
  
\item[(R2)] If $e_i$ contains a cork, then $\sim$ 
identifies each point in the $i^{\text{th}}$ negative face of $H_T$ with a point in $H_{T\backslash e_i}$,
$$\xymatrix{H_T=[0,1]^{2m+n-2}&
\ar[l]_-{\partial_i^-}[0,1]^{2m+n-3}\ar@{->>}[r]^-\eps
&H_{T\backslash e_i}},$$
i.e. $\partial^-_i(x)\sim \eps(x)$ for all $x\in[0,1]^{2m+n-3}$,
where $\eps$ is defined according to the local structure of $T$ around $e_i$. %See Figure~\ref{k21} for an illustration. 
There are eight possible cases:
$$\begin{array}{|c|c||c|c|}
  \norm{T} &\sim&\norm{T} &\sim\\[2pt]
\hline \begin{array}{c}\\[-2mm]
\xy/r1.3pt/:
(0,-10)*{}="o",
(0,0)*-{\bullet}="a",
(-8,4)*{\scriptstyle e_{i-1}},
(-8,12)*{\scriptstyle e_{i}},
(12,12)*{\scriptstyle e_{i+1}},
(0,10)*-<2.8pt>{\bullet}="b",
(-10,20)*-{\bullet}="c",
(10,20)*-<3pt>{\bullet}="d",
(10,30)*{}="f",
\ar@{.}"o";"a",
\ar@{-}"a";"b",
\ar@{-}"b";"c",
\ar@{-}"b";"d",
\ar@{.}"d";"f",
\endxy\\[-2mm]{}
\end{array}
& 
\partial^-_i(x)\st{\text{R2a}}\sim \gamma_{i-1}(x)
&
\begin{array}{c}
\\[-2mm]
\xy/r1.3pt/:
(0,-10)*{}="o",
(0,0)*-{\bullet}="a",
(-5,4)*{\scriptstyle e_{j}},
(-12,12)*{\scriptstyle e_{j+1}},
(9,12)*{\scriptstyle e_{i}},
(0,10)*-<2.8pt>{\bullet}="b",
(-10,20)*-{\bullet}="c",
(10,20)*-<3pt>{\bullet}="d",
(-10,30)*{}="f",
\ar@{.}"o";"a",
\ar@{-}"a";"b",
\ar@{-}"b";"c",
\ar@{-}"b";"d",
\ar@{.}"c";"f",
\endxy\\[-2mm]{}
\end{array}
&
\partial^-_i(x)\st{\text{R2b}}\sim \gamma_{j}(x)\\
\hline\begin{array}{c}
\\[-2mm]
\xy/r1.3pt/:
(0,-10)*{}="o",
(0,0)*-{\bullet}="a",
(-8,4)*{\scriptstyle e_{i-1}},
(-8,12)*{\scriptstyle e_{i}},
(0,10)*-<2.8pt>{\bullet}="b",
(-10,20)*-{\bullet}="c",
(10,20)*{}="d",
\ar@{.}"o";"a",
\ar@{-}"a";"b",
\ar@{-}"b";"c",
\ar@{-}"b";"d",
\endxy\\[-2mm]{}
\end{array}
&\partial^-_i(x)\st{\text{R2c}}\sim \pi_{i-1}(x)
&
\begin{array}{c}
\\[-2mm]
\xy/r1.3pt/:
(0,-10)*{}="o",
(0,0)*-{\bullet}="a",
(-8,4)*{\scriptstyle e_{i-1}},
(8,12)*{\scriptstyle e_{i}},
(0,10)*-<2.8pt>{\bullet}="b",
(-10,20)*{}="c",
(10,20)*-<3pt>{\bullet}="d",
\ar@{.}"o";"a",
\ar@{-}"a";"b",
\ar@{-}"b";"c",
\ar@{-}"b";"d",
\endxy\\[-2mm]{}
\end{array}
&\partial^-_i(x)\st{\text{R2d}}\sim \pi_{i-1}(x)\\
\hline\begin{array}{c}
\\[-2mm]
\xy/r1.3pt/:
(0,0)*{}="a",
(-13,12)*{\scriptstyle e_i=e_{1}},
(8,12)*{\scriptstyle e_{2}},
(0,10)*-<2.8pt>{\bullet}="b",
(-10,20)*-{\bullet}="c",
(10,20)*-<3pt>{\bullet}="d",
(10,30)*{}="f",
\ar@{-}"a";"b",
\ar@{-}"b";"c",
\ar@{-}"b";"d",
\ar@{.}"d";"f",
\endxy\\[-2mm]{}
\end{array}
&
\partial^-_1(x)\st{\text{R2e}}\sim \pi_{1}(x)
&
\begin{array}{c}
\\[-2mm]
\xy/r1.3pt/:
(0,0)*{}="a",
(-8,12)*{\scriptstyle e_{1}},
(24,12)*{\scriptstyle e_{i}=e_{2m+n-2}},
(0,10)*-<2.8pt>{\bullet}="b",
(-10,20)*-{\bullet}="c",
(10,20)*-<3pt>{\bullet}="d",
(-10,30)*{}="f",
\ar@{-}"a";"b",
\ar@{-}"b";"c",
\ar@{-}"b";"d",
\ar@{.}"c";"f",
\endxy\\[-2mm]{}
\end{array}
&
\partial^-_{2m+n-2}(x)\st{\text{R2f}}\sim \pi_{1}(x)\\
\hline\begin{array}{c}
\\[-2mm]
\xy/r1.3pt/:
(0,0)*{}="a",
(-13,12)*{\scriptstyle e_i=e_{1}},
(0,10)*-<2.8pt>{\bullet}="b",
(-10,20)*-{\bullet}="c",
(10,20)*{}="d",
\ar@{-}"a";"b",
\ar@{-}"b";"c",
\ar@{-}"b";"d",
\endxy\\[-2mm]{}
\end{array}
&
\partial^-_1(x)\st{\text{R2g}}\sim \bullet\in H_|
&
\begin{array}{c}
\\[-2mm]
\xy/r1.3pt/:
(0,0)*{}="a",
(13,12)*{\scriptstyle e_i=e_{1}},
(0,10)*-<2.8pt>{\bullet}="b",
(-10,20)*{}="c",
(10,20)*-<3pt>{\bullet}="d",
\ar@{-}"a";"b",
\ar@{-}"b";"c",
\ar@{-}"b";"d",
\endxy\\[-2mm]{}
\end{array}
&
\partial^-_1(x)\st{\text{R2h}}\sim \bullet\in H_|\\\hline
  \end{array}
$$

\bigskip

\noindent These relations can be illustrated as follows:

$$\begin{array}{|c|}
\hline\\[-8pt]
  \qquad\;\begin{array}{c}\\[-2mm]
\xy/r1.3pt/:
(0,-10)*{}="o",
(0,0)*-{\bullet}="a",
(-4,4)*{\scriptstyle a},
(-8,12)*{\scriptstyle 0},
(8,12)*{\scriptstyle b},
(0,10)*-<2.8pt>{\bullet}="b",
(-10,20)*-{\bullet}="c",
(10,20)*-<3pt>{\bullet}="d",
(10,30)*{}="f",
\ar@{.}"o";"a",
\ar@{-}"a";"b",
\ar@{-}"b";"c",
\ar@{-}"b";"d",
\ar@{.}"d";"f",
\endxy\\[-2mm]{}
\end{array}
\quad\st{\text{R2a}}\sim\quad
\begin{array}{c}\\[-2mm]
\xy/r1.3pt/:
(0,-10)*{}="o",
(0,0)*-{\bullet}="a",
(15,10)*{\scriptstyle \max(a,b)},
(0,20)*-<3pt>{\bullet}="d",
(0,30)*{}="f",
\ar@{.}"o";"a",
\ar@{-}"a";"d",
\ar@{.}"d";"f",
\endxy\\[-2mm]{}
\end{array}
\st{\text{R2b}}\sim\quad
\begin{array}{c}
\\[-2mm]
\xy/r1.3pt/:
(0,-10)*{}="o",
(0,0)*-{\bullet}="a",
(-4,4)*{\scriptstyle a},
(-8,12)*{\scriptstyle b},
(8,12)*{\scriptstyle 0},
(0,10)*-<2.8pt>{\bullet}="b",
(-10,20)*-{\bullet}="c",
(10,20)*-<3pt>{\bullet}="d",
(-10,30)*{}="f",
\ar@{.}"o";"a",
\ar@{-}"a";"b",
\ar@{-}"b";"c",
\ar@{-}"b";"d",
\ar@{.}"c";"f",
\endxy\\[-2mm]{}
\end{array}
\\
\hline\begin{array}{c}
\\[-2mm]
\xy/r1.3pt/:
(0,-10)*{}="o",
(0,0)*-{\bullet}="a",
(-4,4)*{\scriptstyle a},
(-8,12)*{\scriptstyle 0},
(0,10)*-<2.8pt>{\bullet}="b",
(-10,20)*-{\bullet}="c",
(10,20)*{}="d",
\ar@{.}"o";"a",
\ar@{-}"a";"b",
\ar@{-}"b";"c",
\ar@{-}"b";"d",
\endxy\\[-2mm]{}
\end{array}
\quad\st{\text{R2c}}\sim\quad
\begin{array}{c}\\[-2mm]
\xy/r1.3pt/:
(0,-10)*{}="o",
(0,0)*-{\bullet}="a",
(0,20)*{}="d",
\ar@{.}"o";"a",
\ar@{-}"a";"d",
\endxy\\[-2mm]{}
\end{array}
\quad\st{\text{R2d}}\sim\quad
\begin{array}{c}
\\[-2mm]
\xy/r1.3pt/:
(0,-10)*{}="o",
(0,0)*-{\bullet}="a",
(-4,4)*{\scriptstyle a},
(8,12)*{\scriptstyle 0},
(0,10)*-<2.8pt>{\bullet}="b",
(-10,20)*{}="c",
(10,20)*-<3pt>{\bullet}="d",
\ar@{.}"o";"a",
\ar@{-}"a";"b",
\ar@{-}"b";"c",
\ar@{-}"b";"d",
\endxy\\[-2mm]{}
\end{array}
\\
\hline\begin{array}{c}
\\[-2mm]
\xy/r1.3pt/:
(0,0)*{}="a",
(-8,12)*{\scriptstyle 0},
(8,12)*{\scriptstyle a},
(0,10)*-<2.8pt>{\bullet}="b",
(-10,20)*-{\bullet}="c",
(10,20)*-<3pt>{\bullet}="d",
(10,30)*{}="f",
\ar@{-}"a";"b",
\ar@{-}"b";"c",
\ar@{-}"b";"d",
\ar@{.}"d";"f",
\endxy\\[-2mm]{}
\end{array}
\quad\st{\text{R2e}}\sim\quad
\begin{array}{c}\\[-2mm]
\xy/r1.3pt/:
(0,0)*{}="a",
(0,20)*-<3pt>{\bullet}="d",
(0,30)*{}="f",
\ar@{-}"a";"d",
\ar@{.}"d";"f",
\endxy\\[-2mm]{}
\end{array}
\quad\st{\text{R2f}}\sim\quad
\begin{array}{c}
\\[-2mm]
\xy/r1.3pt/:
(0,0)*{}="a",
(-8,12)*{\scriptstyle a},
(7,12)*{\scriptstyle 0},
(0,10)*-<2.8pt>{\bullet}="b",
(-10,20)*-{\bullet}="c",
(10,20)*-<3pt>{\bullet}="d",
(-10,30)*{}="f",
\ar@{-}"a";"b",
\ar@{-}"b";"c",
\ar@{-}"b";"d",
\ar@{.}"c";"f",
\endxy\\[-2mm]{}
\end{array}
\\
\hline\begin{array}{c}
\\[-2mm]
\xy/r1.3pt/:
(0,0)*{}="a",
(-8,12)*{\scriptstyle 0},
(0,10)*-<2.8pt>{\bullet}="b",
(-10,20)*-{\bullet}="c",
(10,20)*{}="d",
\ar@{-}"a";"b",
\ar@{-}"b";"c",
\ar@{-}"b";"d",
\endxy\\[-2mm]{}
\end{array}
\quad\st{\text{R2g}}\sim\quad
\begin{array}{c}\\[-2mm]
\xy/r1.3pt/:
(0,0)*{}="a",
(0,20)*{}="d",
\ar@{-}"a";"d",
\endxy\\[-2mm]{}
\end{array}
\quad\st{\text{R2h}}\sim\quad
\begin{array}{c}
\\[-2mm]
\xy/r1.3pt/:
(0,0)*{}="a",
(8,12)*{\scriptstyle 0},
(0,10)*-<2.8pt>{\bullet}="b",
(-10,20)*{}="c",
(10,20)*-<3pt>{\bullet}="d",
\ar@{-}"a";"b",
\ar@{-}"b";"c",
\ar@{-}"b";"d",
\endxy\\[-2mm]{}
\end{array}\\\hline
  \end{array}
$$

\end{itemize}

\bigskip

\begin{prop}\label{circi}
For $p\geq 1$ and $q\geq 0$ there are maps
$$\circ_j\colon K^u_p\times K^u_q\To K^u_{p+q}$$
defined on cells by maps
$$c_j(T,T')\colon H_T\times H_{T'}\To H_{T\circ_jT'}$$
where $T$ and $T'$ are binary trees with $p$ and $q$ leaves and $s$ and $t$ corks, respectively.
If %$e_{k}\in I(T\circ_j T')$ is
the identification of the root edge of $T'$ with the $j^{\text{th}}$ leaf edge of $T$ is the $k^{\text{th}}$ inner edge of $T\circ_j T'$
then the map $c_j(T,T')$ is given by
$$
\xymatrix@C=20pt{%c_j(T,T')\colon 
H_T\times H_{T'}=[0,1]^{\alpha}\times[0,1]^{\beta}\ar[r]^-{\sigma_{
%\alpha,\beta,
k}}& [0,1]^{\alpha+\beta}\ar[r]^-{\partial^+_{k}}& [0,1]^{\alpha+\beta+1}=H_{T\circ_jT'}.}
$$
Here $\alpha=2s+p-2$ and $\beta=2t+q-2$ are the numbers of inner edges of $T$ and $T'$, respectively. If $\norm{T}=|$ or $\norm{T'}=|$ then $c_j(T,T')$ is the identity.
\end{prop}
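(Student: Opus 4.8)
The plan is to verify that the cubical maps $c_j(T,T')$ are compatible with the equivalence relations defining the unital associahedra, so that they descend to a single continuous map $\circ_j\colon K^u_p\times K^u_q\To K^u_{p+q}$; continuity of the descended map is then automatic from the universal property of the quotient, so the entire content is well-definedness. Working in compactly generated spaces, the product of the two quotient maps $\coprod_T H_T\To K^u_p$ and $\coprod_{T'}H_{T'}\To K^u_q$ is again a quotient map, so $K^u_p\times K^u_q$ is the quotient of $\coprod_{T,T'}H_T\times H_{T'}$ by the relation generated by applying $\sim$ in each variable separately. Since $\sim$ is generated by (R1) and (R2a)--(R2h), it suffices to check that the family $\{c_j(T,T')\}$ respects each of these generating relations in the first variable (a relation between trees $T_1,T_2$ of $K^u_p$, with $T'$ and a point $y\in H_{T'}$ held fixed) and, symmetrically, in the second. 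I would first dispose of the degenerate cases: if $\norm{T}=|$ then $H_T=\bullet$, grafting creates no new inner edge, $T\circ_j T'=T'$, and $c_j(T,T')$ is the identity, for which compatibility is immediate; the case $\norm{T'}=|$ is identical.

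The key geometric input is that grafting commutes with both elementary tree operations appearing in the relations. If $e$ is an inner edge of $T$ other than the $j^{\text{th}}$ leaf edge it becomes an inner edge $\tilde e$ of $T\circ_j T'$, with $(T\circ_j T')/\tilde e=(T/e)\circ_j T'$ when $e$ carries no cork and $(T\circ_j T')\backslash\tilde e=(T\backslash e)\circ_j T'$ when it does; the analogous identities hold for inner edges of $T'$. Consequently a generating relation relating $T_1$ and $T_2$ in the first variable produces, after grafting with $T'$, a relation of the same shape relating $T_1\circ_j T'$ and $T_2\circ_j T'$ in the target complex. It then remains to check that the maps $c_j$ actually carry each point of the source relation onto the corresponding point of this target relation.

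This reduces to a finite list of identities among the cube operators. Writing $c_j(T,T')=\partial^+_k\sigma_k$, where $k$ is the position of the newly created grafting edge, one needs commutations of the shape $\partial^+_k\sigma_k\,\partial^-_a=\partial^-_{a^*}\,\partial^+_{k^*}\sigma_{k^*}$ and, for the cork relations, the corresponding identities in which $\partial^-_a$ is replaced by a connection $\gamma_a$ or a degeneracy $\pi_a$ read off from one of (R2a)--(R2h). These are instances of the standard cubical identities relating faces, degeneracies, connections and the shuffles $\sigma$, with the index shifts forced by the insertion of the $\beta$ coordinates of $T'$ at place $k$ and of the value $1$ at the grafting coordinate. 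Because the grafting edge is a \emph{new} edge, distinct from every inner edge of $T$ and of $T'$, its coordinate is generically disjoint from those touched by the relation, and the identities reduce to operators acting on disjoint blocks of coordinates; the source face, connection or degeneracy then lands on exactly the target one prescribed by the combinatorial statement of the previous paragraph.

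The step I expect to be the main obstacle is the index bookkeeping in the cases where the edge $e$ of the relation is adjacent to the grafting leaf, so that the grafting coordinate, pinned to the value $1$ by $\partial^+_k$, sits next to the coordinates on which a connection $\gamma$ or a degeneracy $\pi$ operates. Here one must go carefully through the eight cases (R2a)--(R2h), and according to whether $e$ lies below, beside or above the grafting point, confirm that the transported indices $a^*$ and $k^*$ are precisely those occurring in the target relation, using in the connection cases that $\max(-,1)=1$. Once this case analysis is completed in the first variable, the second variable follows by the symmetric argument, with the block of $T'$-coordinates now playing the role of the $T$-coordinates, and the proposition follows.
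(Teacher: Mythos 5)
Your plan follows the paper's proof essentially verbatim: the paper likewise reduces to checking each of the nine generating relations in each variable separately, disposes first of the cases $\norm{T}=|$ and $\norm{T'}=|$, and handles the generic relations by the observation that the grafting coordinate is disjoint from those touched by the relation. The one place your phrasing is too optimistic --- the claim that grafting always produces a relation ``of the same shape'' --- is exactly the obstacle you flag at the end: when the relevant cork edge is adjacent to the grafting leaf (or to the root of $T'$), the degeneracy-type relations (R2c)--(R2f) transport to the connection-type relations (R2a)/(R2b) via $\max(a,1)=1$, and (R2g)/(R2h) transport to (R2c)--(R2f), which is precisely the case analysis the paper's diagrams carry out.
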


With our graphical notation, the composition law $\circ_j$ can be illustrated as follows:
$$
\begin{array}{c}
\xy/r1.3pt/:
(0,0)*{}="a",
(0,10)*-<3pt>{\bullet}="b",
(-10,20)*-<3pt>{\bullet}="c",
(10,20)*-<3pt>{\bullet}="d",
(-8,12)*{\scriptstyle x_1},
(-17,23)*{\scriptstyle x_2},
(17,23)*{\scriptstyle x_4},
(8,12)*{\scriptstyle x_3},
(4,30)*{}="e",
(16,30)*-<3pt>{\bullet}="g",
(-16,30)*-<3pt>{\bullet}="ee",
(-4,30)*{}="gg",
\ar@{-}"a";"b",
\ar@{-}"b";"c",
\ar@{-}"b";"d",
\ar@{-}"d";"e",
\ar@{-}"d";"g",
\ar@{-}"c";"ee",
\ar@{-}"c";"gg",
\endxy
\end{array}
\circ_1
\begin{array}{c}
\xy/r1.3pt/:
(0,0)*{}="a",
(0,10)*-<3pt>{\bullet}="b",
(-6,20)*{}="c",
(6,20)*-<3pt>{\bullet}="d",
(8,14)*{\scriptstyle y_1},
(8,34)*{\scriptstyle y_3},
(0,30)*-<3pt>{\bullet}="e",
(0,23)*{\scriptstyle y_2},
(12,30)*{}="g",
(-6,40)*{}="h",
(6,40)*-<3pt>{\bullet}="j",
\ar@{-}"a";"b",
\ar@{-}"b";"c",
\ar@{-}"b";"d",
\ar@{-}"d";"e",
\ar@{-}"d";"g",
\ar@{-}"e";"h",
\ar@{-}"e";"j",
\endxy
\end{array}
=
\begin{array}{c}
\xy/r1.3pt/:
(0,0)*{}="a",
(0,10)*-<3pt>{\bullet}="b",
(-10,20)*-<3pt>{\bullet}="c",
(10,20)*-<3pt>{\bullet}="d",
(-8,12)*{\scriptstyle x_1},
(-17,23)*{\scriptstyle x_2},
(17,23)*{\scriptstyle x_4},
(8,12)*{\scriptstyle x_3},
(4,30)*{}="e",
(16,30)*-<3pt>{\bullet}="g",
(-16,30)*-<3pt>{\bullet}="ee",
(-4,30)*{}="gg",
(-4,30)*-<3pt>{\bullet}="bbb",
(-10,40)*{}="ccc",
(2,40)*-<3pt>{\bullet}="ddd",
(4,34)*{\scriptstyle y_1},
(4,54)*{\scriptstyle y_3},
(-4,24)*{\scriptstyle 1},
(-4,50)*-<3pt>{\bullet}="eee",
(-4,43)*{\scriptstyle y_2},
(8,50)*{}="ggg",
(-10,60)*{}="hhh",
(2,60)*-<3pt>{\bullet}="jjj",
\ar@{-}"bbb";"ccc",
\ar@{-}"bbb";"ddd",
\ar@{-}"ddd";"eee",
\ar@{-}"ddd";"ggg",
\ar@{-}"eee";"hhh",
\ar@{-}"eee";"jjj",
\ar@{-}"a";"b",
\ar@{-}"b";"c",
\ar@{-}"b";"d",
\ar@{-}"d";"e",
\ar@{-}"d";"g",
\ar@{-}"c";"ee",
\ar@{-}"c";"gg",
\endxy
\end{array}=(x_1,x_2,1,y_1,y_2,y_3,x_3,x_4).
$$

\begin{proof}[Proof of Proposition \ref{circi}]
We must check the compatibility with the nine relations generating $\sim$. Given $x\in H_{T_1}$ and $x'\in H_{T_2}$ satisfying $x\sim x'$ we must show that $c_j(T_1,T')(x,y)\sim c_j(T_2,T')(x',y)$ for any $y\in H_{T'}$. Similarly, if $y\in H_{T_1'}$ and $y'\in H_{T_2'}$ satisfy $y\sim y'$ we must show that $c_j(T,T'_1)(x,y)\sim c_j(T,T'_2)(x,y')$ for any $x\in H_{T}$. In the first (resp.\ second) case the compatibility condition is obvious when $T'$ (resp.\ $T$) is $|$. We exclude these cases from now on.

If  $x\sim x'$ or $y\sim y'$ by (R1) then the compatibility conditions are illustrated by the following diagrams, which show (R1) relations between faces of $H_{T_1\circ_jT'}$ and $H_{T_2\circ_jT'}$, and $H_{T\circ_jT'_1}$ and $H_{T\circ_jT'_2}$, derived from the former relations.

$$
\xy
(0,20)*+[F]{T'},
(0,0)*+[F]{\begin{array}{c}
\xy/r1.3pt/:
(0,0)*{}="a",
(0,10)*-<3pt>{\bullet}="b",
(10,20)*{}="c",
(10,30)*{}="cc",
(-10,20)*-<3pt>{\bullet}="d",
(-7,13)*{\scriptstyle 0},
(-20,0)*{\scriptstyle T_1},
(0,30)*{}="e",
(0,40)*{}="ee",
(-20,30)*{}="g",
(-20,40)*{}="gg",
\ar@{.}"a";"b",
\ar@{.}"c";"cc",
\ar@{.}"e";"ee",
\ar@{.}"g";"gg",
\ar@{-}"b";"c",
\ar@{-}"b";"d",
\ar@{-}"d";"e",
\ar@{-}"d";"g",
\endxy
\end{array}},
(0,11)*{\bullet},
(0,17.5)*{\bullet},
(2,14.25)*{\scriptstyle 1}
\ar@{-}(0,11);(0,17.5)
 \endxy
\quad\sim\quad
\xy
(0,20)*+[F]{T'},
(0,0)*+[F]{\begin{array}{c}
\xy/r1.3pt/:
(0,0)*{}="a",
(0,10)*-<3pt>{\bullet}="b",
(-10,20)*{}="c",
(-10,30)*{}="cc",
(10,20)*-<3pt>{\bullet}="d",
(7,13)*{\scriptstyle 0},
(20,0)*{\scriptstyle T_2},
(0,30)*{}="e",
(0,40)*{}="ee",
(20,30)*{}="g",
(20,40)*{}="gg",
\ar@{.}"a";"b",
\ar@{.}"c";"cc",
\ar@{.}"e";"ee",
\ar@{.}"g";"gg",
\ar@{-}"b";"c",
\ar@{-}"b";"d",
\ar@{-}"d";"e",
\ar@{-}"d";"g",
\endxy
\end{array}},
(0,11)*{\bullet},
(0,17.5)*{\bullet},
(2,14.25)*{\scriptstyle 1}
\ar@{-}(0,11);(0,17.5)
\endxy
\qquad\qquad
\begin{array}{c}\\[-2.9cm]
\xy
(0,-19.8)*+[F]{T},
(0,0)*+[F]{\begin{array}{c}
\xy/r1.3pt/:
(0,0)*{}="a",
(0,10)*-<3pt>{\bullet}="b",
(10,20)*{}="c",
(10,30)*{}="cc",
(-10,20)*-<3pt>{\bullet}="d",
(-7,13)*{\scriptstyle 0},
(-20,0)*{\scriptstyle T_1'},
(0,30)*{}="e",
(0,40)*{}="ee",
(-20,30)*{}="g",
(-20,40)*{}="gg",
\ar@{.}"a";"b",
\ar@{.}"c";"cc",
\ar@{.}"e";"ee",
\ar@{.}"g";"gg",
\ar@{-}"b";"c",
\ar@{-}"b";"d",
\ar@{-}"d";"e",
\ar@{-}"d";"g",
\endxy
\end{array}},
(0,-11)*{\bullet},
(0,-17.5)*{\bullet},
(2,-14.25)*{\scriptstyle 1}
\ar@{-}(0,-11);(0,-17.5)
 \endxy
\quad\sim\quad
\begin{array}{c}\\[.87cm]
\xy
(0,-19.8)*+[F]{T},
(0,0)*+[F]{\begin{array}{c}
\xy/r1.3pt/:
(0,0)*{}="a",
(0,10)*-<3pt>{\bullet}="b",
(-10,20)*{}="c",
(-10,30)*{}="cc",
(10,20)*-<3pt>{\bullet}="d",
(7,13)*{\scriptstyle 0},
(20,0)*{\scriptstyle T_2'},
(0,30)*{}="e",
(0,40)*{}="ee",
(20,30)*{}="g",
(20,40)*{}="gg",
\ar@{.}"a";"b",
\ar@{.}"c";"cc",
\ar@{.}"e";"ee",
\ar@{.}"g";"gg",
\ar@{-}"b";"c",
\ar@{-}"b";"d",
\ar@{-}"d";"e",
\ar@{-}"d";"g",
\endxy
\end{array}},
(0,-11)*{\bullet},
(0,-17.5)*{\bullet},
(2,-14.25)*{\scriptstyle 1}
\ar@{-}(0,-11);(0,-17.5)
\endxy
\end{array}
\end{array}
$$

\medskip

The same can be done for relations (R2a) and (R2b). Also for (R2c) and (R2d) unless $e_i$ is adjacent to the $j^{\text{th}}$ leaf and $x\sim x'$ because of one of these relations. In these cases the compatibility relations follow from (R2a) and (R2b) respectively, as illustrated by the following diagrams.
$$\begin{array}{c}
\\[-2mm]
\xy/r1.3pt/:
(0,-10)*{}="o",
(0,0)*-{\bullet}="a",
(-4,4)*{\scriptstyle a},
(-8,12)*{\scriptstyle 0},
(8,12)*{\scriptstyle 1},
(0,10)*-<2.8pt>{\bullet}="b",
(-10,20)*-{\bullet}="c",
(10,20)*-<3pt>{\bullet}="d",
(10,26)*+[F]{T'}
\ar@{.}"o";"a",
\ar@{-}"a";"b",
\ar@{-}"b";"c",
\ar@{-}"b";"d",
\endxy\\[-2mm]{}
\end{array}
\quad\st{\text{R2a}}\sim\quad
\begin{array}{c}\\[-2mm]
\xy/r1.3pt/:
(0,-10)*{}="o",
(0,0)*-{\bullet}="a",
(19,10)*{\scriptstyle \max(a,1)=1},
(0,20)*-{\bullet}="d",
(0,26)*+[F]{T'}
\ar@{.}"o";"a",
\ar@{-}"a";"d",
\endxy\\[-2mm]{}
\end{array}
\quad\st{\text{R2b}}\sim\quad
\begin{array}{c}
\\[-2mm]
\xy/r1.3pt/:
(0,-10)*{}="o",
(0,0)*-{\bullet}="a",
(-4,4)*{\scriptstyle a},
(8,12)*{\scriptstyle 0},
(-8,12)*{\scriptstyle 1},
(0,10)*-<2.8pt>{\bullet}="b",
(-10,20)*-{\bullet}="c",
(-10,26)*+[F]{T'},
(10,20)*-<3pt>{\bullet}="d",
\ar@{.}"o";"a",
\ar@{-}"a";"b",
\ar@{-}"b";"c",
\ar@{-}"b";"d",
\endxy\\[-2mm]{}
\end{array}$$

We can also proceed in the simple way if $x\sim x'$ by (R2e) or (R2f). If $y\sim y'$ by (R2e) or (R2f) then the compatibility condition follows from 
$$\begin{array}{c}
\\[-2mm]
\xy/r1.3pt/:
(0,-5)*+[F]{T},
(0,0)*-{\bullet}="a",
(-3,5)*{\scriptstyle 1},
(-8,12)*{\scriptstyle 0},
(8,12)*{\scriptstyle a},
(0,10)*-<2.8pt>{\bullet}="b",
(-10,20)*-{\bullet}="c",
(10,20)*-<3pt>{\bullet}="d",
(10,30)*{}="f",
\ar@{-}"a";"b",
\ar@{-}"b";"c",
\ar@{-}"b";"d",
\ar@{.}"d";"f",
\endxy\\[-2mm]{}
\end{array}
\quad\st{\text{R2a}}\sim\quad
\begin{array}{c}\\[-2mm]
\xy/r1.3pt/:
(0,-5)*+[F]{T},
(0,0)*-{\bullet}="a",
(19,10)*{\scriptstyle \max(a,1)=1},
(0,20)*-<3pt>{\bullet}="d",
(0,30)*{}="f",
\ar@{-}"a";"d",
\ar@{.}"d";"f",
\endxy\\[-2mm]{}
\end{array}
\quad\st{\text{R2b}}\sim\quad
\begin{array}{c}
\\[-2mm]
\xy/r1.3pt/:
(0,-5)*+[F]{T},
(0,0)*-{\bullet}="a",
(-3,5)*{\scriptstyle 1},
(-8,12)*{\scriptstyle a},
(7,12)*{\scriptstyle 0},
(0,10)*-<2.8pt>{\bullet}="b",
(-10,20)*-{\bullet}="c",
(10,20)*-<3pt>{\bullet}="d",
(-10,30)*{}="f",
\ar@{-}"a";"b",
\ar@{-}"b";"c",
\ar@{-}"b";"d",
\ar@{.}"c";"f",
\endxy\\[-2mm]{}
\end{array}$$

The compatibility conditions for (R2g) and (R2h) are illustrated below.
$$\begin{array}{c}
\\[-2mm]
\xy/r1.3pt/:
(0,0)*{}="a",
(-8,12)*{\scriptstyle 0},
(8,12)*{\scriptstyle 1},
(0,10)*-<2.8pt>{\bullet}="b",
(-10,20)*-{\bullet}="c",
(10,20)*-<3pt>{\bullet}="d",
(10,26)*+[F]{T'}
\ar@{-}"a";"b",
\ar@{-}"b";"c",
\ar@{-}"b";"d",
\endxy\\[-2mm]{}
\end{array}
\quad\st{\text{R2e}}\sim\quad
\begin{array}{c}\\[-2mm]
\xy/r1.3pt/:
(0,0)*{}="a",
(0,20)*-{\bullet}="d",
(0,26)*+[F]{T'}
\ar@{-}"a";"d",
\endxy\\[-2mm]{}
\end{array}
\quad\st{\text{R2f}}\sim\quad
\begin{array}{c}
\\[-2mm]
\xy/r1.3pt/:
(0,0)*{}="a",
(8,12)*{\scriptstyle 0},
(-8,12)*{\scriptstyle 1},
(0,10)*-<2.8pt>{\bullet}="b",
(-10,20)*-{\bullet}="c",
(-10,26)*+[F]{T'},
(10,20)*-<3pt>{\bullet}="d",
\ar@{-}"a";"b",
\ar@{-}"b";"c",
\ar@{-}"b";"d",
\endxy\\[-2mm]{}
\end{array}$$

$$\begin{array}{c}
\\[-2mm]
\xy/r1.3pt/:
(0,-5)*+[F]{T},
(0,0)*-{\bullet}="a",
(-3,5)*{\scriptstyle 1},
(-8,12)*{\scriptstyle 0},
(0,10)*-<2.8pt>{\bullet}="b",
(-10,20)*-{\bullet}="c",
(10,20)*{}="d",
\ar@{-}"a";"b",
\ar@{-}"b";"c",
\ar@{-}"b";"d",
\endxy\\[-2mm]{}
\end{array}
\quad\st{\text{R2c}}\sim\quad
\begin{array}{c}\\[-2mm]
\xy/r1.3pt/:
(0,-5)*+[F]{T},
(0,0)*-{\bullet}="a",
(0,20)*{}="d",
\ar@{-}"a";"d",
\endxy\\[-2mm]{}
\end{array}
\quad\st{\text{R2d}}\sim\quad
\begin{array}{c}
\\[-2mm]
\xy/r1.3pt/:
(0,-5)*+[F]{T},
(0,0)*-{\bullet}="a",
(-3,5)*{\scriptstyle 1},
(7,12)*{\scriptstyle 0},
(0,10)*-<2.8pt>{\bullet}="b",
(-10,20)*-{\bullet}="c",
(10,20)*{}="d",
\ar@{-}"a";"b",
\ar@{-}"b";"c",
\ar@{-}"b";"d",
\endxy\\[-2mm]{}
\end{array}$$

\end{proof}

Once we have checked that the composition maps given by grafting of labeled trees are well defined, it is straightforward to see that they define an operad.

\begin{thm}
Unital associahedra form a topological operad, called $\mathtt{uA}_\infty$, with the composition laws in Proposition \ref{circi} and unit $u=\bullet\in H_{|}\subset K_1^u$.
\end{thm}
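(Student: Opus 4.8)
The plan is to verify, for the maps $\circ_j$ of Proposition \ref{circi}, the four operad relations of Section 1. Since these are maps between \emph{spaces} we are in the unsigned situation, so the relevant relations are the two associativity laws (1) and (2) together with the two unit laws (3) and (4). By Proposition \ref{circi} the $\circ_j$ are already well defined and continuous on the quotients $K^u_n$, so each relation is an equality of continuous maps; to establish it I would restrict to a product of representative cells and compare the two resulting cellular maps. The unit laws are then immediate: since $u=\bullet\in H_|$ and $c_j(T,T')$ is the identity whenever $\norm T=|$ or $\norm{T'}=|$, both $u\circ_1 a=a$ and $a\circ_i u=a$ hold verbatim on every cell.

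For the two associativity laws, the first observation is that the underlying trees already agree on the two sides: grafting makes the set of trees into the operad $\mathtt T$ of Section 2, so $(T\circ_i T')\circ_j T''$ equals $T\circ_i(T'\circ_{j-i+1}T'')$, and likewise for the parallel law. Consequently both composite cellular maps land in a single cube $H_S$ indexed by this common tree $S$, and it remains to check that the two bookkeeping recipes deliver the same point of $H_S$. Each elementary composition does exactly two things: by the insertion isomorphism $\sigma_k$ it carries the coordinates of the two factor cubes into their slots among the inner edges of the grafted tree, in the order dictated by the insertion of $V(T')\setminus\{r(T')\}$ into the path order; and by $\partial^+_k$ it sets the newly created grafting edge to the value $1$. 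In either order of composition the original labels of the three factor cubes occupy the positions prescribed by the path order of $S$, while the two grafting edges both receive the value $1$, so the two coordinate vectors coincide.

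The one genuine point to settle, and the main obstacle, is the index bookkeeping: that the positions $k$ of the freshly created edges and the shuffles $\sigma_k$ are compatible between the two orders of composition. This reduces to a short list of commutation identities among the maps $\sigma_k$ and $\partial^+_k$, of the same flavour as the cocubical relations, read off directly from the defining formula $(x_1,\dots,x_s,y_1,\dots,y_t)\mapsto(x_1,\dots,x_{k-1},y_1,\dots,y_t,x_k,\dots,x_s)$ for $\sigma_k$ in \eqref{sigma}. For the parallel law (1) the two grafting edges sit in disjoint blocks of coordinates, so the two insertions commute; for the sequential law (2) one insertion takes place inside the block produced by the other, and the nested-insertion identity for $\sigma$ yields the equality. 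Because there are no Koszul signs to track in the topological setting, once these coordinate identities are verified the relations hold on every cell, hence on $K^u_n$, completing the proof.
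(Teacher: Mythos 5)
Your proposal is correct and follows essentially the same route as the paper, which, after establishing well-definedness in Proposition \ref{circi}, simply asserts that ``it is straightforward to see that they define an operad.'' You have filled in exactly those omitted details: the unit laws from the convention that $c_j(T,T')$ is the identity when $\norm{T}=|$ or $\norm{T'}=|$, and the associativity laws from the operad structure on trees together with the commutation identities for $\sigma_k$ and $\partial^+_k$.
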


\begin{rem}
One can actually show that $\mathtt{uA}_\infty$ may be obtained by applying a non-symmetric Boardman--Vogt construction to the operad $\mathtt{Ass}$ for topological monoids, $\mathtt{Ass}(n)=\bullet$, $n\geq 0$. Therefore $\mathtt{uA}_\infty$ is a cofibrant operad in the model category of topological operads \cite{ahto,tbvrommc,htnso}.
\end{rem}

Our goal now is to endow unital associahedra with a new smaller cellular structure generalizing the polytope cellular structure of non-unital associahedra.

Notice that $K_n^u$ admits a filtration by finite-dimensional subcomplexes $$K_n^u=\bigcup_{m=0}^\infty K_{n,m}^u$$
where $K_{n,m}^u$ is the quotient of the disjoint union 
$$\coprod_{\begin{array}{c}\\[-5mm]\text{\scriptsize binary trees $T$}\\[-1.5mm]\text{\scriptsize with $n$ leaves}
\\[-1.5mm]\text{\scriptsize and  $\leq m$ corks}
\end{array}}H_T$$
%indexed by the binary trees with $n$ leaves
by the restriction of the equivalence relation generated by (R1, R2).
For $m=0$ we obtain the associahedra
$$K^u_{n,0}=K_n.$$
The following proposition shows that passing from one stage in the filtration to the next consists of adjoining cells
$$
\coprod\limits_{
\begin{array}{c}\\[-15pt]
\scriptstyle S\subset\ord{n+m}\\[-3pt]
\scriptstyle \abs{S}=m
\end{array}}\hspace{-17pt}K_{n+m}\times[0,1]^m
.
$$
%Here we write $[k]=\{1,\dots,k\}$. 
Given a set $S\subseteq [n+m]$, $|S|=m$, and a binary tree $T$ with $n$ leaves and no corks,
the \emph{shuffle permutation of coordinates}
$$H_T\times[0,1]^{m}\st{\cong}\To H_{T^{\bullet S}}$$
puts the last $m$ coordinates in the positions of the new $m$ inner edges in $T^{\bullet S}$ formed by adding corks to $T$.
We consider also the negative cubical boundary, $m\geq1$,  
$$\xymatrix@C=6em{
\partial^-([0,1]^m)=\displaystyle\bigcup_{i=1}^m\partial^-_i([0,1]^{m-1})
\ar@{ >->}[r]_-\sim^-{\scriptstyle\text{incl.}}
    &
[0,1]^m.}
$$

%Notice that, for $1\leq i<j\leq m$,
%$$\partial^-_i([0,1]^{m-1})\cap
%\partial^-_j([0,1]^{m-1})
%=\partial^-_i\partial^-_{j-1}([0,1]^{m-2})
%=\partial^-_{j}\partial^-_i([0,1]^{m-2}).$$

\begin{prop}\label{puchau}
For  $n\geq 0$ and $m\geq 1$, $(n,m)\neq (0,1)$, there is a push-out diagram
$$\xy
(-3.5,-2)*{\mathop{\displaystyle\coprod}\limits_{
\begin{array}{c}\\[-15pt]
\scriptstyle S\subset\ord{n+m}
\\[-3pt]
\scriptstyle \abs{S}=m
\end{array}}\hspace{-17pt}K_{n+m}\times\partial^-([0,1]^m)},
(53,-2)*{\mathop{\displaystyle\coprod}\limits_{
\begin{array}{c}\\[-15pt]
\scriptstyle S\subset\ord{n+m}\\[-3pt]
\scriptstyle \abs{S}=m
\end{array}}\hspace{-17pt}K_{n+m}\times[0,1]^m},
(0,-20)*{K^u_{n,m-1}},
(60,-20)*{K^u_{n,m}},
(30,-10)*{\scriptstyle\text{push}}
\ar@{ >->}(18,1.5);(40,1.5)_\sim^{\coprod \id{}\!%K_{n+m}
\times\scriptstyle\text{incl.}}
\ar(0,-2);(0,-16)_{(f_{S})_{S}}
\ar@{ >->}(7,-19.5);(54,-19.5)_\sim^{\text{inclusion}}
\ar(60,-2);(60,-16)^{(\bar{f}_{S})_{S}}
\endxy$$
such that, if $S=\{j_1<\cdots<j_m\}$ and $T$ is a binary tree with $n+m$ leaves and no corks:
\begin{enumerate} 
\item The restriction of $\bar f_S$ to the product cell $H_T\times [0,1]^{m}$  maps it to the cell $H_{T^{\bullet S}}$ by the shuffle permutation of coordinates
$$H_T\times[0,1]^{m}\st{\cong}\To H_{T^{\bullet S}}.$$
\item For $1\leq i\leq m$, the restriction of $f_S$ to the product cell $H_T\times\partial^-_i([0,1]^{m-1})$ is given by a shuffle permutation of coordinates followed by a projection,
$$\xymatrix{H_T\times[0,1]^{m-1}\ar[r]^-{\cong}
&H_{T^{\bullet (S\setminus\{j_i\})}}=[0,1]^{2m+n-3}\ar@{->>}[r]^-\eps
&H_{T^{\bullet S}\backslash e}},$$
if the cork specified by $j_i\in S$ forms the inner edge $e$ of $T^{\bullet S}$ and the map $\eps$ is defined in (R2) according the local structure of $T^{\bullet S}$ around $e$.
\end{enumerate}
\end{prop}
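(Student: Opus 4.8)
The plan is to prove the statement by stratifying the defining quotient of $K^u_{n,m}$ according to the number of corks and treating the relations (R1) and (R2) separately. The starting point is the combinatorial observation that adding corks sets up a bijection between binary trees with $n$ leaves and \emph{exactly} $m$ corks and pairs $(T,S)$, where $T$ is a binary tree with $n+m$ leaves and no corks and $S\subseteq\ord{n+m}$ with $\abs S=m$: one recovers $T'=T^{\bullet S}$ by grafting $C_0$ at the leaves indexed by $S$, and conversely every cork of $T'$ sits at a unique such position. Under this bijection the former $i$-th leaf edges (for $i\in S$) become inner edges, so that $H_{T^{\bullet S}}=[0,1]^{2m+n-2}$ factors canonically as $H_T\times[0,1]^m$ by the shuffle permutation of coordinates, the last $m$ coordinates being the labels of the new cork edges. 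Since $K^u_{n,m}$ is the quotient of $\coprod_{\le m\text{ corks}}H_{T'}$ by $\sim$, and the subfamily with $\le m-1$ corks already yields the subcomplex $K^u_{n,m-1}$, it remains to understand how the cells with exactly $m$ corks are glued among themselves and attached to $K^u_{n,m-1}$.

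First I would analyze the relations (R1). A relation (R1) contracts a non-cork inner edge, hence preserves both the number of corks and, reading left to right, the positions of the corks; thus it never mixes the exactly-$m$-cork stratum with lower strata, and it preserves $S$. For a fixed $S$, the relation $T^{\bullet S}/e_i=T'^{\bullet S}/e_j'$ on a non-cork edge is equivalent to $T/e_i=T'/e_j'$, and the induced identification of negative faces takes place entirely in the $H_T$ factors, identically on $[0,1]^m$. These are exactly the relations defining the associahedron $K_{n+m}$ out of $\coprod_T H_T$, so the exactly-$m$-cork cells with cork positions $S$ assemble into $K_{n+m}\times[0,1]^m$. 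This produces the map $\bar f_S$ and verifies item (1).

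Next I would identify the attaching map via the relations (R2). Each (R2) relation lowers the number of corks by exactly one and is triggered precisely when the label of a cork edge is $0$, that is, on a negative face in one of the $m$ cork directions; under the factorization of item (1) these are exactly the faces $K_{n+m}\times\partial^-_i([0,1]^{m-1})$. On such a face the shuffle identifies $H_T\times[0,1]^{m-1}$ with $H_{T^{\bullet(S\setminus\{j_i\})}}$, and the relevant instance of (R2)---one of the eight local cases (R2a)--(R2h), according to the shape of $T^{\bullet S}$ around the cork edge $e$---sends it by the projection $\eps$ (a connection $\gamma$, a degeneracy $\pi$, or a collapse to a point) onto $H_{T^{\bullet S}\backslash e}$, which has $m-1$ corks and therefore lies in $K^u_{n,m-1}$. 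This is item (2). Since no (R2) relates two exactly-$m$-cork cells, and neither (R1) nor (R2) touches the positive cork faces or the open cells, the only gluing of the new cells to $K^u_{n,m-1}$ occurs along $\coprod_S K_{n+m}\times\partial^-([0,1]^m)$ by the maps $f_S$; this exhibits $K^u_{n,m}$ as the asserted push-out.

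The main obstacle is the third step: one must check that $f_S$ is well defined and continuous on the whole negative boundary $\partial^-([0,1]^m)$, not merely on each open face $\partial^-_i([0,1]^{m-1})$. This amounts to verifying compatibility on the corners where two or more cork labels vanish simultaneously, i.e.\ that the several applicable instances of (R2), together with their interaction with the connections $\gamma$ and degeneracies $\pi$, agree there; the eight-case bookkeeping of (R2) and the commutativity of the relevant degeneracies and connections make this routine but somewhat delicate. Finally, the hypothesis $(n,m)\neq(0,1)$ is exactly what guarantees $n+m\geq 2$, so that every added cork attaches to a genuine trivalent vertex and its edge is inner, contributing the claimed $[0,1]$ factor; in the excluded case the single cork would attach to the root, its edge would be the non-inner root edge, and the $[0,1]^m$ factor would degenerate.
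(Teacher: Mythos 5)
Your proposal is correct and follows essentially the same route as the paper's (very brief) proof: stratify by the number of corks, observe that the shuffle permutation identifies the exactly-$m$-cork cells modulo (R1) with $\coprod_S K_{n+m}\times[0,1]^m$, and note that under this identification the negative cube faces are precisely where the (R2) relations attach the new cells to $K^u_{n,m-1}$. Your write-up is in fact more detailed than the paper's, which does not explicitly address the corner-compatibility of the $\eps$ maps that you flag (and correctly describe as a routine commutation of connections and degeneracies), nor the role of the excluded case $(n,m)=(0,1)$.
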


\begin{proof}
The shuffle permutation of coordinates gives a homeomorphism
$$
{\coprod\limits_{
\begin{array}{c}\\[-15pt]
\scriptstyle S\subset\ord{n+m}\\[-3pt]
\scriptstyle \abs{S}=m
\end{array}}\hspace{-17pt}K_{n+m}\times[0,1]^m}\qquad
\cong
\left.\coprod_{\begin{array}{c}\\[-5mm]\text{\scriptsize binary trees $T$}\\[-1.3mm]\text{\scriptsize with $n$ leaves}
\\[-1.3mm]\text{\scriptsize and  $m$ corks} 
\end{array}}H_T\right/ \text{(R1)}$$
under which the $i^{\text{th}}$ negative face of $[0,1]^m$ corresponds to the $k^{\text{th}}$  negative face of $H_T$, 
if the edge $e$  defined in (2) is the $k^{\text{th}}$ inner edge of $T^{\bullet S}$.  
Thus the relations imposed by the pushout are exactly the relations (R2).
\end{proof}

\begin{figure}[H]
\scalebox{0.8} % Change this value to rescale the drawing.
{
\begin{pspicture}(0,-2.205)(8.98,2.2191112)
\psline[linewidth=0.02cm](4.01,0.365)(4.49,-0.115)
\psline[linewidth=0.02cm](4.49,-0.595)(4.49,0.365)
\psline[linewidth=0.02cm](4.49,-0.115)(4.97,0.365)
\psdots[dotsize=0.16](4.49,0.365)
\psdots[dotsize=0.16](4.49,-0.115)
\psline[linewidth=0.02cm](4.49,1.805)(4.49,1.485)
\psline[linewidth=0.02cm](4.49,1.805)(4.81,2.125)
\psline[linewidth=0.02cm](4.49,1.805)(4.17,2.125)
\psline[linewidth=0.02cm](4.49,1.805)(4.49,2.125)
\psdots[dotsize=0.16,fillstyle=solid,dotstyle=o](4.49,2.125)
\psdots[dotsize=0.12](4.49,1.805)
\psline[linewidth=0.02cm](5.93,-0.755)(5.93,-0.435)
\psline[linewidth=0.02cm](5.93,-0.435)(6.25,-0.115)
\psline[linewidth=0.02cm](5.93,-0.435)(5.61,-0.115)
\psdots[dotsize=0.12](5.93,-0.435)
\psline[linewidth=0.02cm](6.25,-0.115)(6.57,0.205)
\psline[linewidth=0.02cm](6.25,-0.115)(5.93,0.205)
\psdots[dotsize=0.12](6.25,-0.115)
\psdots[dotsize=0.12](5.93,0.205)
\psline[linewidth=0.02cm](3.05,-0.755)(3.05,-0.435)
\psline[linewidth=0.02cm](3.05,-0.435)(2.73,-0.115)
\psline[linewidth=0.02cm](2.73,-0.115)(2.41,0.205)
\psline[linewidth=0.02cm](2.73,-0.115)(3.05,0.205)
\psline[linewidth=0.02cm](3.05,-0.435)(3.37,-0.115)
\psdots[dotsize=0.12](2.73,-0.115)
\psdots[dotsize=0.12](3.05,-0.435)
\psdots[dotsize=0.12](3.05,0.205)
\psline[linewidth=0.02](1.29,1.165)(1.29,-1.395)(7.69,-1.395)(7.69,1.165)(1.29,1.165)
\psline[linewidth=0.02cm,linestyle=dashed,dash=0.16cm 0.16cm](4.49,1.165)(4.49,0.685)
\psdots[dotsize=0.12](6.09,1.805)
\psdots[dotsize=0.12](2.89,-1.875)
\psline[linewidth=0.02cm](2.89,-2.035)(2.89,-1.875)
\psline[linewidth=0.02cm](2.89,-1.875)(2.73,-1.715)
\psline[linewidth=0.02cm](2.89,-1.875)(3.05,-1.715)
\psdots[dotsize=0.12](6.09,-1.875)
\psline[linewidth=0.02cm](6.09,-2.035)(6.09,-1.875)
\psline[linewidth=0.02cm](6.09,-1.875)(5.93,-1.715)
\psline[linewidth=0.02cm](6.09,-1.875)(6.25,-1.715)
\psdots[dotsize=0.12](4.49,-1.875)
\psline[linewidth=0.02cm](4.49,-2.195)(4.49,-1.875)
\psline[linewidth=0.02cm](4.49,-1.875)(4.17,-1.555)
\psline[linewidth=0.02cm](4.49,-1.875)(4.81,-1.555)
\psdots[dotsize=0.12](1.13,-1.875)
\psline[linewidth=0.02cm](1.13,-2.195)(1.13,-1.875)
\psline[linewidth=0.02cm](1.13,-1.875)(0.81,-1.555)
\psline[linewidth=0.02cm](1.13,-1.875)(1.45,-1.555)
\psdots[dotsize=0.12](7.85,-1.875)
\psline[linewidth=0.02cm](7.85,-2.195)(7.85,-1.875)
\psline[linewidth=0.02cm](7.85,-1.875)(7.53,-1.555)
\psline[linewidth=0.02cm](7.85,-1.875)(8.17,-1.555)
\psline[linewidth=0.02cm,linestyle=dashed,dash=0.16cm 0.16cm](4.49,-0.915)(4.49,-1.395)
\psline[linewidth=0.02cm](3.05,1.325)(3.05,1.645)
\psline[linewidth=0.02cm](3.05,1.645)(2.89,1.805)
\psline[linewidth=0.02cm](2.89,1.805)(2.73,1.965)
\psline[linewidth=0.02cm](3.05,1.645)(3.21,1.805)
\psdots[dotsize=0.12](2.89,1.805)
\psdots[dotsize=0.12](3.05,1.645)
\psline[linewidth=0.02cm](5.93,1.325)(5.93,1.645)
\psline[linewidth=0.02cm](5.93,1.645)(6.09,1.805)
\psline[linewidth=0.02cm](5.93,1.645)(5.77,1.805)
\psdots[dotsize=0.12](5.93,1.645)
\psline[linewidth=0.02cm](6.25,1.965)(6.09,1.805)
\psline[linewidth=0.02cm](2.89,1.805)(3.05,1.965)
\psdots[dotsize=0.16,fillstyle=solid,dotstyle=o](3.05,1.965)
\psline[linewidth=0.02cm](6.09,1.805)(5.93,1.965)
\psdots[dotsize=0.16,fillstyle=solid,dotstyle=o](5.93,1.965)
\psline[linewidth=0.02cm](1.13,1.165)(1.13,1.485)
\psline[linewidth=0.02cm](1.13,1.485)(0.81,1.805)
\psline[linewidth=0.02cm](0.81,1.805)(0.49,2.125)
\psline[linewidth=0.02cm](1.13,1.485)(1.45,1.805)
\psdots[dotsize=0.12](0.81,1.805)
\psdots[dotsize=0.12](1.13,1.485)
\psline[linewidth=0.02cm](0.81,1.805)(1.13,2.125)
\psdots[dotsize=0.16,fillstyle=solid,dotstyle=o](1.13,2.125)
\psdots[dotsize=0.12](8.17,1.805)
\psline[linewidth=0.02cm](7.85,1.165)(7.85,1.485)
\psline[linewidth=0.02cm](7.85,1.485)(8.17,1.805)
\psline[linewidth=0.02cm](7.85,1.485)(7.53,1.805)
\psdots[dotsize=0.12](7.85,1.485)
\psline[linewidth=0.02cm](8.49,2.125)(8.17,1.805)
\psline[linewidth=0.02cm](8.17,1.805)(7.85,2.125)
\psdots[dotsize=0.16,fillstyle=solid,dotstyle=o](7.85,2.125)
\psline[linewidth=0.02cm](0.65,-0.755)(0.65,-0.435)
\psline[linewidth=0.02cm](0.65,-0.435)(0.33,-0.115)
\psline[linewidth=0.02cm](0.33,-0.115)(0.01,0.205)
\psline[linewidth=0.02cm](0.33,-0.115)(0.65,0.205)
\psline[linewidth=0.02cm](0.65,-0.435)(0.97,-0.115)
\psdots[dotsize=0.12](0.33,-0.115)
\psdots[dotsize=0.12](0.65,-0.435)
\psdots[dotsize=0.12](0.65,0.205)
\psline[linewidth=0.02cm](8.33,-0.755)(8.33,-0.435)
\psline[linewidth=0.02cm](8.33,-0.435)(8.65,-0.115)
\psline[linewidth=0.02cm](8.33,-0.435)(8.01,-0.115)
\psdots[dotsize=0.12](8.33,-0.435)
\psline[linewidth=0.02cm](8.65,-0.115)(8.97,0.205)
\psline[linewidth=0.02cm](8.65,-0.115)(8.33,0.205)
\psdots[dotsize=0.12](8.65,-0.115)
\psdots[dotsize=0.12](8.33,0.205)
%\usefont{T1}{ptm}{m}{n}
\rput(0.856875,1.43){$1$}
%\usefont{T1}{ptm}{m}{n}
\rput(0.376875,-0.49){$1$}
%\usefont{T1}{ptm}{m}{n}
\rput(8.536875,-0.49){$1$}
%\usefont{T1}{ptm}{m}{n}
\rput(8.056875,1.43){$1$}
\end{pspicture} 
}
\caption{A $2$-cell of $K^u_{2,1}$ depicting edges identified by (R1) and degenerate by (R2), cf.\ Figure \ref{firstfour}. An edge with a white cork has length $1$, as do the other edges so labeled.}\label{k21}
\end{figure}

Thus we have a cork filtration
$$\xymatrix{
K^u_{n,0}\ar@{ >->}[r]^-\sim&K^u_{n,1}\ar@{ >->}[r]^-\sim&K^u_{n,2}\ar@{ >->}[r]^-\sim&\cdots \ar@{ >->}[r]^-\sim &K^u_n
}$$
formed by attaching cells obtained from products of  associahedra with hypercubes,
 $$K_{n+m}\times [0,1]^m.$$
%If $T$ is a tree with $n$ leaves and $m$ corks and $T'$ the tree with $n+m$ leaves obtained by removing the corks from $T$, then 
%The associahedron $K_n$ can be represented as an $(n-2)$-dimensional convex polytope with $(n-3)$-faces $K_p\circ_i K_q$
%, $p+q-1=n$, $1\leq i\leq p$.
%The poset of faces of $K_n$ under inclusion may be identified with the category of planted planar trees with $n$ leaves and no corks, since inclusion of faces corresponds to collapse maps $T\to T/e$. We may write $K_T$ for the codimension $r$ face of $K_n$ corresponding to a tree $T$ with $n$ leaves and $r$ inner edges. 
%The unique $(n-2)$-cell is the height 2 tree with no internal edges. The codimension 1 faces $K_p\circ_i K_q$ of $K_n$ are the trees with a unique internal edge $e=\{v_1,v_{i+1}\}\in I(T)$, and more generally
%$$
%\partial(K_T)=\bigcup_{T_2\circ_i T_1\to T}
%K_{T}\circ_i K_{T'}=K_{T\circ_i T'}
%\bigcup_{T_2\circ_i T_1\to T} 
%$$  

\begin{cor}\label{arecontractible}
Unital associahedra are contractible.
\end{cor}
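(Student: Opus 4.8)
The plan is to run an induction up the cork filtration $K^u_n=\bigcup_{m\ge0}K^u_{n,m}$, using the push-out of Proposition~\ref{puchau} to show that each inclusion $K^u_{n,m-1}\hookrightarrow K^u_{n,m}$ is a homotopy equivalence, and then identifying a contractible bottom stage. It is cleanest to work in the category of topological spaces with the Str\o m model structure, whose weak equivalences are the homotopy equivalences and whose cofibrations are the closed Hurewicz cofibrations; there every space is fibrant and cofibrant, and the acyclic cofibrations are stable under coproduct, product with a fixed space, push-out, and transfinite composition. Granting this, contractibility of $K^u_n$ will reduce to contractibility of the single stage we choose to start from.

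First I would isolate the one geometric input: the inclusion $\partial^-([0,1]^m)\hookrightarrow[0,1]^m$ is an acyclic cofibration. It is a cofibration, being the inclusion of a subcomplex of a CW complex. It is a homotopy equivalence because both spaces are contractible: the cube is convex, and the linear homotopy $((x_1,\dots,x_m),t)\mapsto((1-t)x_1,\dots,(1-t)x_m)$ contracts $\partial^-([0,1]^m)$ onto the origin without ever leaving it, since a coordinate equal to $0$ remains equal to $0$. Applying $-\times K_{n+m}$ and then $\coprod_S$ preserves acyclic cofibrations, so the top horizontal map $\coprod_S \operatorname{id}\times\text{incl}$ of the square in Proposition~\ref{puchau} is an acyclic cofibration; note this is exactly the map the paper has already decorated with $\sim$.

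Next, since acyclic cofibrations are closed under push-out, the bottom map $K^u_{n,m-1}\hookrightarrow K^u_{n,m}$ is an acyclic cofibration for every $m\ge1$ with $(n,m)\ne(0,1)$. Passing to the sequential colimit and invoking closure under transfinite composition, the inclusion $K^u_{n,m_0}\hookrightarrow K^u_n$ of any admissible starting stage is a homotopy equivalence. It then suffices to pick a contractible $m_0$. For $n\ge1$ I would take $m_0=0$: here $K^u_{n,0}=K_n$ is a point for $n\le2$ and an $(n-2)$-dimensional convex polytope for $n\ge3$, hence contractible. For $n=0$, where the case $(0,1)$ is excluded from the proposition, I would instead take $m_0=1$ and compute $K^u_{0,1}$ directly: the only binary tree with no leaves and at most one cork is the single cork $C_0$, for which $H_{C_0}=[0,1]^0=\bullet$, so $K^u_{0,1}=\bullet$. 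In either case $K^u_n$ is homotopy equivalent to a point.

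I expect the main obstacle to be bookkeeping rather than geometry. The substantive content is the single observation that $\partial^-([0,1]^m)\hookrightarrow[0,1]^m$ is an acyclic cofibration; after that everything is formal. The points requiring care are verifying that the filtration maps are genuine closed cofibrations so that the abstract stability properties apply, and handling the degenerate stage $(n,m)=(0,1)$ by hand, since there the filtration jumps from $\varnothing$ to a point (so the inclusion is not a weak equivalence) and Proposition~\ref{puchau} does not apply. Isolating that exceptional stage and starting the $n=0$ induction at $m_0=1$ circumvents the difficulty.
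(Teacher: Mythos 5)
Your argument is correct and is exactly the one the paper intends: the corollary is stated as an immediate consequence of the cork filtration of Proposition \ref{puchau}, in which each stage $K^u_{n,m-1}\hookrightarrow K^u_{n,m}$ is obtained by pushing out along the acyclic cofibrations $K_{n+m}\times\partial^-([0,1]^m)\hookrightarrow K_{n+m}\times[0,1]^m$, so that $K^u_n$ is homotopy equivalent to the contractible bottom stage ($K_n$ for $n\geq 1$, and $K^u_{0,1}=\bullet$ for $n=0$). Your explicit handling of the excluded case $(n,m)=(0,1)$ and the verification that $\partial^-([0,1]^m)$ is contractible are exactly the details the paper leaves implicit.
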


After the classical associahedron $K^u_{n,0}=K_n$, the next stage in the cork filtration also has a straightforward description:
\begin{cor}
For $n\geq1$ the space $K^u_{n,1}$ is the mapping cylinder of
$$
(\circ_i)_i\colon\coprod_{i=1}^{n+1}\!\!K_{n+1}\To K_{n},
$$  
where $\circ_i:K_{n+1}\to K_n$ are the associahedral degeneracy maps \eqref{degeneracy}.
\end{cor}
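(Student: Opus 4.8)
The plan is to specialize Proposition~\ref{puchau} to the case $m=1$ and then recognize the resulting push-out as a mapping cylinder. For $m=1$ the indexing subsets $S\subset[n+1]$ with $\abs S=1$ are exactly the singletons $S=\{i\}$, $1\leq i\leq n+1$, so the coproduct over $S$ becomes $\coprod_{i=1}^{n+1}K_{n+1}$; moreover $\partial^-([0,1]^1)=\partial_1^-([0,1]^0)$ is the single endpoint $\{0\}\subset[0,1]$, and the top horizontal map is the inclusion of that endpoint. Since $(n,1)\neq(0,1)$ whenever $n\geq1$, Proposition~\ref{puchau} applies and exhibits $K^u_{n,1}$ as the push-out
$$
\xymatrix{
\coprod_{i=1}^{n+1}K_{n+1}\times\{0\}\ar@{ >->}[r]\ar[d]_-{(f_{\{i\}})_i}
&\coprod_{i=1}^{n+1}K_{n+1}\times[0,1]\ar[d]\\
K_n\ar[r]&K^u_{n,1}.
}
$$
This is precisely the defining push-out of the mapping cylinder of $(f_{\{i\}})_i\colon\coprod_{i=1}^{n+1}K_{n+1}\to K_n$, so everything reduces to identifying the attaching map $(f_{\{i\}})_i$ with the tuple of degeneracies $(\circ_i)_i$ of~\eqref{degeneracy}.

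To carry out this identification I would compare the two maps cell by cell. Let $T$ be a binary tree with $n+1$ leaves and no corks, so $H_T=[0,1]^{n-1}$ is a top cell of $K_{n+1}$. Adding a cork at the $i^{\text{th}}$ leaf produces $T^{\bullet\{i\}}$, in which the former $i^{\text{th}}$ leaf edge becomes the inner edge $e$ carrying the cork; deleting it, $T^{\bullet\{i\}}\backslash e$ is the $n$-leaf binary tree obtained by removing the terminal and merging its two sibling edges, which is exactly the tree underlying the target cell of $\circ_i$ on $H_T$. For $m=1$ the shuffle of coordinates in Proposition~\ref{puchau}(2) is trivial, so $f_{\{i\}}$ on $H_T$ is just the projection $\eps\colon H_T\twoheadrightarrow H_{T^{\bullet\{i\}}\backslash e}$ prescribed by (R2) according to the local shape of $T^{\bullet\{i\}}$ around $e$. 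I would then check that this $\eps$ agrees with the connection or degeneracy defining $\circ_i$ in~\eqref{degeneracy}. The local trichotomy governing $\circ_i$ (Figure~\ref{circideg})---both edges adjacent to the $i^{\text{th}}$ leaf inner, or one of them the root, or one a leaf---is exactly the trichotomy governing the eight cases (R2a)--(R2h): the cases (R2a), (R2b) reproduce the connection $\gamma_j$, the four cases (R2c)--(R2f) reproduce the degeneracy $\pi_j$, and the two fully degenerate cases (R2g), (R2h) correspond to $\circ_i\colon K_2\to K_1$ collapsing onto the point $H_|$. Up to the left/right mirror symmetry noted in Figure~\ref{circideg}, this is precisely the list of cases in~\eqref{degeneracy}.

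Granting this case check, $(f_{\{i\}})_i=(\circ_i)_i$ and the push-out above is the mapping cylinder of $(\circ_i)_i\colon\coprod_{i=1}^{n+1}K_{n+1}\to K_n$, which is the corollary. The only genuine work is the cell-by-cell matching of the maps $\eps$ of (R2) with the connections and degeneracies of~\eqref{degeneracy}; I expect this to be routine, the main thing to track being the relabeling of inner-edge indices caused by inserting a cork, with the one point to watch being the degenerate endpoints $\circ_i\colon K_2\to K_1$ handled by (R2g)--(R2h).
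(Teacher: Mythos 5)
Your proposal is correct and follows exactly the route the paper intends: the corollary is stated as an immediate consequence of Proposition~\ref{puchau} with $m=1$, where the negative cubical boundary $\partial^-([0,1])=\{0\}$ turns the push-out into the defining square of a mapping cylinder, and the attaching maps $f_{\{i\}}$ agree with the degeneracies $\circ_i$ because the (R2) cases reproduce precisely the connection/degeneracy trichotomy of \eqref{degeneracy} (with (R2g)--(R2h) covering $K_2\to K_1$). The paper offers no further argument, so your cell-by-cell identification is exactly the content being left implicit.
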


\begin{figure}[H]

\bigskip

$$
\begin{array}{cc}\\[-29pt]&\scalebox{.5} % Change this value to rescale the drawing.
{
\begin{pspicture}(0,-0.45)(0.18,0.46)
\psline[linewidth=0.02cm](0.08,0.36)(0.08,-0.44)  
%%%\psdots[dotsize=0.16](0.08,0.36)                  % CORCHO NEGRO
\psdots[dotsize=0.2,dotstyle=o](0.08,0.36)        % CORCHO BLANCO
\end{pspicture} 
}
\\[-3pt] K^{u}_{0,1}=&{\bullet}\end{array},\qquad
K^{u}_{1,1}=
\begin{array}{c}\\[-30pt]
\begin{array}{ccccc}
\scalebox{.3} % Change this value to rescale the drawing.
{
\begin{pspicture}(0,-0.87)(1.72,0.87)
\psline[linewidth=0.04cm](0.1,0.75)(0.9,-0.05)
\psline[linewidth=0.04cm](0.9,-0.05)(1.7,0.75)
\psline[linewidth=0.04cm](0.9,-0.05)(0.9,-0.85)
\psdots[dotsize=0.2](0.9,-0.05)
\psdots[dotsize=0.2,fillstyle=solid,dotstyle=o](0.1,0.75)
\end{pspicture} 
}

&\scalebox{.3} % Change this value to rescale the drawing.
{
\begin{pspicture}(0,-0.87)(1.72,0.87)
\psline[linewidth=0.04cm](0.1,0.75)(0.9,-0.05)
\psline[linewidth=0.04cm](0.9,-0.05)(1.7,0.75)
\psline[linewidth=0.04cm](0.9,-0.05)(0.9,-0.85)
\psdots[dotsize=0.2](0.9,-0.05)
\psdots[dotsize=0.2](0.1,0.75)
\end{pspicture} 
}
&
\scalebox{.4} % Change this value to rescale the drawing.
{
\begin{pspicture}(0,-0.42)(0.02,0.42)
\psline[linewidth=0.04cm](0.0,0.4)(0.0,-0.4)
\end{pspicture} 
}

&\scalebox{.3} % Change this value to rescale the drawing.
{
\begin{pspicture}(0,-0.87)(1.72,0.87)
\psline[linewidth=0.04cm](0.0,0.75)(0.8,-0.05)
\psline[linewidth=0.04cm](0.8,-0.05)(1.6,0.75)
\psline[linewidth=0.04cm](0.8,-0.05)(0.8,-0.85)
\psdots[dotsize=0.2](0.8,-0.05)
\psdots[dotsize=0.2](1.6,0.75)
\end{pspicture} 
}
&\scalebox{.3} % Change this value to rescale the drawing.
{
\begin{pspicture}(0,-0.87)(1.72,0.87)
\psline[linewidth=0.04cm](0.0,0.75)(0.8,-0.05)
\psline[linewidth=0.04cm](0.8,-0.05)(1.6,0.75)
\psline[linewidth=0.04cm](0.8,-0.05)(0.8,-0.85)
\psdots[dotsize=0.2](0.8,-0.05)
\psdots[dotsize=0.2,fillstyle=solid,dotstyle=o](1.6,0.75)
\end{pspicture} 
}

\end{array}
\\

\scalebox{1} % Change this value to rescale the drawing.
{
\begin{pspicture}(0,-0.1)(3.38,0.1)
\psdots[dotsize=0.16](0.08,0.0)
\psdots[dotsize=0.16](1.68,0.0)
\psdots[dotsize=0.16](3.28,0.0)
\psline[linewidth=0.02cm](0.08,0.0)(3.28,0.0)
\end{pspicture} 
}
\end{array},$$

$$
K^{u}_{2,1}=
\begin{array}{c}
\scalebox{.5} % Change this value to rescale the drawing.
{
\begin{pspicture}(0,-4.45)(9.874111,4.454111)
\definecolor{color13b}{rgb}{1.0,0.5372549019607843,0.5372549019607843}
\definecolor{color14b}{rgb}{0.5372549019607843,0.5372549019607843,1.0}
\definecolor{color15b}{rgb}{0.5372549019607843,1.0,0.5372549019607843}
\pspolygon[linewidth=0.0020,linecolor=white,fillstyle=solid,fillcolor=color13b](4.93,-0.09)(2.53,3.51)(7.33,3.51)
\pspolygon[linewidth=0.0020,linecolor=white,fillstyle=solid,fillcolor=color14b](4.93,-0.09)(0.53,-0.09)(2.53,-3.69)
\pspolygon[linewidth=0.0020,linecolor=white,fillstyle=solid,fillcolor=color15b](4.93,-0.09)(9.33,-0.09)(7.33,-3.69)
\psline[linewidth=0.1cm,linecolor=red](2.53,3.51)(7.33,3.51)
\psline[linewidth=0.1cm,linecolor=blue](0.53,-0.09)(2.53,-3.69)
\psline[linewidth=0.1cm,linecolor=green](9.33,-0.09)(7.33,-3.69)
\psline[linewidth=0.02cm](4.74,0.1)(4.94,-0.1)
\psline[linewidth=0.02cm](4.94,-0.1)(5.14,0.1)
\psline[linewidth=0.02cm](4.94,-0.1)(4.94,-0.3)
\psdots[dotsize=0.12](4.94,-0.1)
\psline[linewidth=0.02cm](1.98,-0.94)(2.58,-1.54)
\psline[linewidth=0.02cm](2.58,-1.54)(2.58,-0.94)
\psline[linewidth=0.02cm](2.58,-1.54)(3.18,-0.94)
\psline[linewidth=0.02cm](2.58,-1.54)(2.58,-2.04)
\psdots[dotsize=0.16](1.98,-0.94)
\psline[linewidth=0.02cm](4.28,2.76)(4.88,2.16)
\psline[linewidth=0.02cm](4.88,2.16)(4.88,2.76)
\psline[linewidth=0.02cm](4.88,2.16)(5.48,2.76)
\psline[linewidth=0.02cm](4.88,2.16)(4.88,1.66)
\psdots[dotsize=0.16](4.88,2.76)
\psline[linewidth=0.02cm](6.68,-0.94)(7.28,-1.54)
\psline[linewidth=0.02cm](7.28,-1.54)(7.28,-0.94)
\psline[linewidth=0.02cm](7.28,-1.54)(7.88,-0.94)
\psline[linewidth=0.02cm](7.28,-1.54)(7.28,-2.04)
\psdots[dotsize=0.16](7.88,-0.94)
\psdots[dotsize=0.16](7.28,-1.54)
\psdots[dotsize=0.16](2.58,-1.54)
\psdots[dotsize=0.16](4.88,2.16)
\psline[linewidth=0.02cm](4.88,4.06)(4.88,3.76)
\psline[linewidth=0.02cm](4.88,4.06)(5.18,4.36)
\psline[linewidth=0.02cm](4.88,4.06)(4.58,4.36)
\psline[linewidth=0.02cm](4.88,4.06)(4.88,4.36)
\psdots[dotsize=0.16,fillstyle=solid,dotstyle=o](4.88,4.36)
\psline[linewidth=0.02cm](8.98,-2.14)(8.98,-2.44)
\psline[linewidth=0.02cm](8.98,-2.14)(9.28,-1.84)
\psline[linewidth=0.02cm](8.98,-2.14)(8.68,-1.84)
\psline[linewidth=0.02cm](8.98,-2.14)(8.98,-1.84)
\psdots[dotsize=0.16,fillstyle=solid,dotstyle=o](9.28,-1.84)
\psline[linewidth=0.02cm](0.78,-2.24)(0.78,-2.54)
\psline[linewidth=0.02cm](0.78,-2.24)(1.08,-1.94)
\psline[linewidth=0.02cm](0.78,-2.24)(0.48,-1.94)
\psline[linewidth=0.02cm](0.78,-2.24)(0.78,-1.94)
\psdots[dotsize=0.16,fillstyle=solid,dotstyle=o](0.48,-1.94)
\psdots[dotsize=0.12](0.78,-2.24)
\psdots[dotsize=0.12](8.98,-2.14)
\psdots[dotsize=0.12](4.88,4.06)
\psline[linewidth=0.02cm](7.88,0.06)(7.88,0.36)
\psline[linewidth=0.02cm](7.88,0.36)(7.58,0.66)
\psline[linewidth=0.02cm](7.88,0.36)(8.18,0.66)
\psline[linewidth=0.02cm](7.58,0.66)(7.88,0.96)
\psline[linewidth=0.02cm](7.58,0.66)(7.28,0.96)
\psdots[dotsize=0.12](7.58,0.66)
\psdots[dotsize=0.12](7.88,0.36)
\psline[linewidth=0.02cm](2.78,1.76)(2.78,2.06)
\psline[linewidth=0.02cm](2.78,2.06)(2.48,2.36)
\psline[linewidth=0.02cm](2.78,2.06)(3.08,2.36)
\psline[linewidth=0.02cm](2.48,2.36)(2.78,2.66)
\psline[linewidth=0.02cm](2.48,2.36)(2.18,2.66)
\psdots[dotsize=0.12](2.48,2.36)
\psdots[dotsize=0.12](2.78,2.06)
\psline[linewidth=0.02cm](4.38,-3.04)(4.38,-2.74)
\psline[linewidth=0.02cm](4.38,-2.74)(4.08,-2.44)
\psline[linewidth=0.02cm](4.38,-2.74)(4.68,-2.44)
\psline[linewidth=0.02cm](4.08,-2.44)(4.38,-2.14)
\psline[linewidth=0.02cm](4.08,-2.44)(3.78,-2.14)
\psdots[dotsize=0.12](4.08,-2.44)
\psdots[dotsize=0.12](4.38,-2.74)
\psdots[dotsize=0.12](3.78,-2.14)
\psdots[dotsize=0.12](2.78,2.66)
\psdots[dotsize=0.12](8.18,0.66)
\psline[linewidth=0.02cm](7.08,1.76)(7.08,2.06)
\psline[linewidth=0.02cm](7.08,2.06)(7.38,2.36)
\psline[linewidth=0.02cm](7.38,2.36)(7.08,2.66)
\psline[linewidth=0.02cm](7.38,2.36)(7.68,2.66)
\psline[linewidth=0.02cm](7.08,2.06)(6.78,2.36)
\psdots[dotsize=0.12](7.08,2.06)
\psdots[dotsize=0.12](7.38,2.36)
\psline[linewidth=0.02cm](1.68,0.16)(1.68,0.46)
\psline[linewidth=0.02cm](1.68,0.46)(1.98,0.76)
\psline[linewidth=0.02cm](1.98,0.76)(1.68,1.06)
\psline[linewidth=0.02cm](1.98,0.76)(2.28,1.06)
\psline[linewidth=0.02cm](1.68,0.46)(1.38,0.76)
\psdots[dotsize=0.12](1.68,0.46)
\psdots[dotsize=0.12](1.98,0.76)
\psline[linewidth=0.02cm](5.48,-3.04)(5.48,-2.74)
\psline[linewidth=0.02cm](5.48,-2.74)(5.78,-2.44)
\psline[linewidth=0.02cm](5.78,-2.44)(5.48,-2.14)
\psline[linewidth=0.02cm](5.78,-2.44)(6.08,-2.14)
\psline[linewidth=0.02cm](5.48,-2.74)(5.18,-2.44)
\psdots[dotsize=0.12](5.48,-2.74)
\psdots[dotsize=0.12](5.78,-2.44)
\psdots[dotsize=0.12](1.38,0.76)
\psdots[dotsize=0.12](7.08,2.66)
\psdots[dotsize=0.12](6.08,-2.14)
\psline[linewidth=0.02cm](2.68,-4.44)(2.68,-4.24)
\psline[linewidth=0.02cm](2.68,-4.24)(2.88,-4.04)
\psline[linewidth=0.02cm](2.68,-4.24)(2.48,-4.04)
\psline[linewidth=0.02cm](2.48,-4.04)(2.68,-3.84)
\psline[linewidth=0.02cm](2.48,-4.04)(2.28,-3.84)
\psdots[dotsize=0.1](2.68,-4.24)
\psdots[dotsize=0.1](2.48,-4.04)
\psline[linewidth=0.02cm](9.58,-0.24)(9.58,-0.04)
\psline[linewidth=0.02cm](9.58,-0.04)(9.78,0.16)
\psline[linewidth=0.02cm](9.58,-0.04)(9.38,0.16)
\psline[linewidth=0.02cm](9.38,0.16)(9.58,0.36)
\psline[linewidth=0.02cm](9.38,0.16)(9.18,0.36)
\psdots[dotsize=0.1](9.58,-0.04)
\psdots[dotsize=0.1](9.38,0.16)
\psline[linewidth=0.02cm](2.28,3.46)(2.28,3.66)
\psline[linewidth=0.02cm](2.28,3.66)(2.48,3.86)
\psline[linewidth=0.02cm](2.28,3.66)(2.08,3.86)
\psline[linewidth=0.02cm](2.08,3.86)(2.28,4.06)
\psline[linewidth=0.02cm](2.08,3.86)(1.88,4.06)
\psdots[dotsize=0.1](2.28,3.66)
\psdots[dotsize=0.1](2.08,3.86)
\psline[linewidth=0.02cm](7.48,3.46)(7.48,3.66)
\psline[linewidth=0.02cm](7.48,3.66)(7.68,3.86)
\psline[linewidth=0.02cm](7.48,3.66)(7.28,3.86)
\psline[linewidth=0.02cm](7.68,3.86)(7.88,4.06)
\psline[linewidth=0.02cm](7.68,3.86)(7.48,4.06)
\psdots[dotsize=0.1](7.48,3.66)
\psdots[dotsize=0.1](7.68,3.86)
\psline[linewidth=0.02cm](7.08,-4.44)(7.08,-4.24)
\psline[linewidth=0.02cm](7.08,-4.24)(7.28,-4.04)
\psline[linewidth=0.02cm](7.08,-4.24)(6.88,-4.04)
\psline[linewidth=0.02cm](7.28,-4.04)(7.48,-3.84)
\psline[linewidth=0.02cm](7.28,-4.04)(7.08,-3.84)
\psdots[dotsize=0.1](7.08,-4.24)
\psdots[dotsize=0.1](7.28,-4.04)
\psline[linewidth=0.02cm](0.28,-0.24)(0.28,-0.04)
\psline[linewidth=0.02cm](0.28,-0.04)(0.48,0.16)
\psline[linewidth=0.02cm](0.28,-0.04)(0.08,0.16)
\psline[linewidth=0.02cm](0.48,0.16)(0.68,0.36)
\psline[linewidth=0.02cm](0.48,0.16)(0.28,0.36)
\psdots[dotsize=0.1](0.28,-0.04)
\psdots[dotsize=0.1](0.48,0.16)
\psdots[dotsize=0.16,fillstyle=solid,dotstyle=o](0.08,0.16)
\psdots[dotsize=0.16,fillstyle=solid,dotstyle=o](2.28,-3.84)
\psdots[dotsize=0.16,fillstyle=solid,dotstyle=o](7.48,-3.84)
\psdots[dotsize=0.16,fillstyle=solid,dotstyle=o](9.78,0.16)
\psdots[dotsize=0.16,fillstyle=solid,dotstyle=o](7.48,4.06)
\psdots[dotsize=0.16,fillstyle=solid,dotstyle=o](1.88,4.06)
\end{pspicture} 
}
\end{array},
K^{u}_{3,1}=\!\!\!\!\!
\begin{array}{c}
\includegraphics[scale=.4]{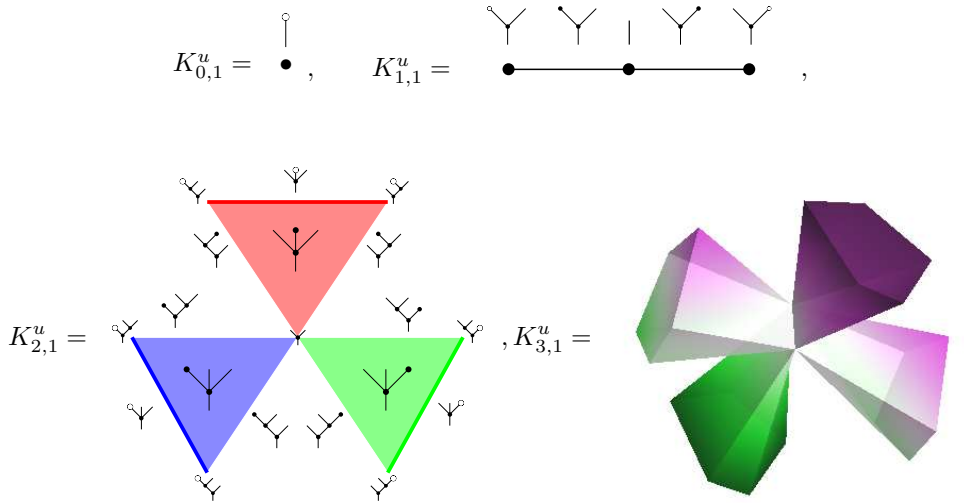}   %para B&W poner ku31grey.eps
\end{array}\!\!\!\!\!\!\!\!\!\!\!\!\!\!\!\!.
$$
\caption{The first four $K^{u}_{n,1}$.}
\label{firstfour}
\end{figure}

We now show that operadic composition maps respect the new cell structure. 

Consider the sets $\mathtt{C}(n)\subset\mathtt{T}(n)$ of trees of height 2 with $n$ leaves.  Then $\mathtt C$ has an operad structure
% tree operad $\mathtt{T}$ has a quotient 
%$\mathtt{C}$  
%consisting of trees of height 2. The quotient map contracts all inner edges containing no corks. The operad structure in $\mathtt C$ is 
given by grafting trees and then contracting the new inner edge.
There is another set operad $\mathtt P$, isomorphic to $\mathtt C$, where 
$$
\mathtt P(n)=\{ (S,n+m):\;m\geq 0,\;S\subseteq [n+m],\;|S|=m\}.
$$ 
A pair $(S,n+m)$ corresponds to the tree obtained from the corolla $C_{n+m}$ by replacing  $m$ leaf vertices by corks, at the positions indicated by the subset $S$. 
Thus the operad structure of $\mathtt P$ has unit $(\varnothing,1)$ and 
if $S_1=\{j_1<\dots<j_s\}\subset\ord{p+s}$,
 $S_2=\{k_1<\dots<k_t\}\subset\ord{q+t}$,  
and $1\leq i\leq p$ then
$$
(S_1,p+s) \circ_i (S_2,q+t)=(S_1\circ_i S_2,p+s+q+t-1).
$$
Here the set $S_1\circ_i S_2$ is
$$
\{j_1,\dots,j_{r-1},\:k_1+i+r-2,\dots,k_t+i+r-2,\:j_r+q+t-1,\dots,j_s+q+t-1\}$$
if the $i^{\text{th}}$ element of the complement of $S_1$ lies between $j_{r-1}$ 
and $j_r$.

\begin{prop}\label{filtri}
The composition structure given in Proposition~\ref{circi} respects the cork filtration 
$$
K^u_{p,s}\circ_i K^u_{q,t}\subset K^u_{p+q-1,s+t}.
$$
Moreover, given
$S_1\subset\ord{p+s}$,
$S_2\subset\ord{q+t}$, and $r$ as in the previous paragraph,
%, % 
the following diagram commutes for $(q,t)\neq (0,1)$
\begin{equation}\label{filtr1}
\xymatrix{
K_{p+s}\times [0,1]^s\times K_{q+t}\times [0,1]^t
\ar[rr]^-{\bar f_{S_1}\times \bar f_{S_2}}
\ar[d]_{\id{}\!\times \text{swap}\times\id{}}&&
K^u_{p,s}\times K^u_{q,t}\ar[dd]^{\circ_i}\\
\ar[d]_{\circ_{i+r-1}\times\sigma_{
%s,t,
r}}
K_{p+s}\times 
K_{q+t}\times [0,1]^s\times [0,1]^t
\\
K_{s+t+p+q-1}\times [0,1]^{s+t}
\ar[rr]^-{\bar f_{S_1\circ_{i}S_{2}}}&& K^u_{p+q-1,s+t}
}\end{equation}
and in the case $(q,t)=(0,1)$ then
\begin{equation}\label{filtr2}
\xymatrix{
K_{p+s}\times [0,1]^s
\ar[rr]^-{(\bar f_{S_1},\bullet)}
\ar[d]_{\id{}\!\times\partial^+_{r}}
%\ar[d]_{\id{}\times \text{swap}\times\id{}}
&&
K^u_{p,s}\times K^u_{0,1}\ar[d]^{\circ_i}\\
%K_{p+s}\times 
%K_{q+t}\times [0,1]^s\times [0,1]^t
%\\
K_{p+s}\times [0,1]^{s+1}
\ar[rr]^-{\bar f_{S_1\circ_{i}\{1\}}
}&& K^u_{p-1,s+1}
}\end{equation}
%where the subset $S_1\circ_{i}S_{2}\subset[s+t+p+q-1]$ is 
\end{prop}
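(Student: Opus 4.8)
The plan is to check both claims one cell at a time, using that every arrow appearing in \eqref{filtr1} and \eqref{filtr2} is assembled from coordinate shuffles $\sigma_\bullet$ and a single positive-face inclusion $\partial^+_\bullet$. First I would dispose of the filtration inclusion. A cell of $K^u_{p,s}$ is the image of some $H_{T_1}$ with $T_1$ a binary tree having $p$ leaves and at most $s$ corks, and similarly a cell of $K^u_{q,t}$ comes from some $T_2$ with $q$ leaves and at most $t$ corks. By the cell-level formula of Proposition~\ref{circi} the composite lands in $H_{T_1\circ_i T_2}$, and grafting adds cork counts, so $T_1\circ_i T_2$ has at most $s+t$ corks; hence $K^u_{p,s}\circ_i K^u_{q,t}\subset K^u_{p+q-1,s+t}$. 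Because all the maps involved are defined cellwise, it then suffices to verify \eqref{filtr1} and \eqref{filtr2} on the indexing cells $H_T\times[0,1]^s\times H_{T'}\times[0,1]^t$, where $T$ and $T'$ are now binary trees with no corks and with $p+s$ and $q+t$ leaves respectively.

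Next I would fix the common target tree. Writing $S_1=\{j_1<\cdots<j_s\}$, the $i^{\text{th}}$ leaf of $T^{\bullet S_1}$ is the $(i+r-1)^{\text{th}}$ leaf of $T$, since precisely the $r-1$ corks $j_1,\dots,j_{r-1}$ precede it; grafting there is disjoint from the cork sites, so cork-insertion commutes with grafting and
$$
T^{\bullet S_1}\circ_i {T'}^{\bullet S_2}=(T\circ_{i+r-1}T')^{\bullet(S_1\circ_i S_2)}.
$$
This is just the operad isomorphism $\mathtt C\cong\mathtt P$ evaluated on representing trees, with $S_1\circ_i S_2$ and $r$ as defined before the statement; I would verify it directly from the combinatorics of the index sets.

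With the target tree identified, I would read a point of any cube $H_{T''}$ as a labeling of the inner edges of $T''$ by numbers in $[0,1]$, the $k^{\text{th}}$ coordinate labeling the $k^{\text{th}}$ inner edge in the path order of $T''$. In this language every map is transparent: $\bar f_{S_1}$ (resp.\ $\bar f_{S_2}$) labels the inner edges inherited from $T$ (resp.\ $T'$) by the $H_T$- (resp.\ $H_{T'}$-) coordinates and the new cork edges by the coordinates of $[0,1]^s$ (resp.\ $[0,1]^t$), while the composition of Proposition~\ref{circi} grafts, retains all labels, and labels the one new grafting edge by $1$. Running the top edge of \eqref{filtr1}, the induced labeling of $T''$ gives the edges of $T$ the $H_T$-coordinates, the edges of $T'$ the $H_{T'}$-coordinates, the $S_1$-corks the $[0,1]^s$-coordinates, the $S_2$-corks the $[0,1]^t$-coordinates, and the grafting edge the value $1$. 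Running the bottom edge, the classical composition $\circ_{i+r-1}$ performs the same grafting and $1$-insertion on the cork-free trees, the shuffle $\sigma_r$ of \eqref{sigma} reorders the cube coordinates into the cork order of $S_1\circ_i S_2$, namely $j_1,\dots,j_{r-1}$, then $S_2$, then $j_r,\dots,j_s$, and $\bar f_{S_1\circ_i S_2}$ distributes them onto the matching cork edges. Both routes produce the identical edge-labeling of $T''$, so \eqref{filtr1} commutes.

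The main obstacle is the bookkeeping hidden in this last step: one must confirm that the two composites of shuffles realize the same permutation of coordinates, i.e.\ that ``insert corks, then graft'' lists the inner edges of $T''$ in the same path order as ``graft, then insert corks'', and that the inserted $1$ occupies the same slot in each. This reduces to the observation that the coordinate attached to an inner edge depends only on that edge's path-order position in the common target $T''$, which is an invariant of $T''$; the sole purpose of $\sigma_r$ is to absorb the difference between the cork orderings of $S_1,S_2$ and of $S_1\circ_i S_2$. For \eqref{filtr2} with $(q,t)=(0,1)$ the tree $T'$ is $|$ but ${T'}^{\bullet\{1\}}=C_0$ is the cork, and the factor $K^u_{0,1}$ is the single point $\bullet=H_{C_0}$. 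Grafting the cork at the $i^{\text{th}}$ leaf of $T^{\bullet S_1}$ turns that leaf edge into a new cork edge; since $H_{C_0}$ is a point, the shuffle $\sigma_k$ is trivial and the composition of Proposition~\ref{circi} reduces to the positive face $\partial^+_k$ inserting a single $1$ at the new cork edge. On the other route $\id{}\times\partial^+_r$ inserts that same $1$, which $\bar f_{S_1\circ_i\{1\}}$ assigns to the new cork edge, the $r^{\text{th}}$ element of $S_1\circ_i\{1\}$; the same labeling argument yields commutativity, now with no $S_2$-coordinates to interleave.
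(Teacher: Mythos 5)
Your verification is correct and is essentially the argument the paper has in mind: the paper dismisses this proposition with ``easily checked by inspection,'' and your cell-by-cell edge-labeling check, together with the identity $T^{\bullet S_1}\circ_i {T'}^{\bullet S_2}=(T\circ_{i+r-1}T')^{\bullet(S_1\circ_i S_2)}$ and the observation that $\sigma_r$ absorbs the reordering of cork coordinates, is exactly that inspection carried out in detail.
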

\noindent This result is easily checked by inspection.

We may describe the new cellular structure of unital associahedra in terms of \emph{trees with black and white corks}, as we have seen depicted in Figures~\ref{k21} and \ref{firstfour} above.

Let $T$ be a tree of height $h$, with $n$ leaves, and $m$ corks, of which $m_b$ are black and $m_w$ are white. We exclude trees with any degree two vertices, and also the case $h=m_b=1$. 
\begin{enumerate}\item If $h=2$ and $m_w=0$ then we have a cell
$$
K^u_T=\bar f_S(K_{n+m}\times [0,1]^m)
$$
where $S\subseteq\ord{n+m}$ indicates which level 2 vertices are corks.
\item If $h=1$ and $m_w=1$, i.e.\ $T=\raisebox{-0.4ex}[0ex][-0.4ex]{\begin{sideways}$\multimap$\end{sideways}}$,
$$
K^u_T=K^u_{0,1}=\{\bullet\}.
$$ 
\item If $T=T'\circ_i T''$ we define inductively
$$
K^u_T=K^u_{T'}\circ_i K^u_{T''}.
$$ 
\end{enumerate}

We may give a non-inductive definition as follows. Consider the tree $T'''$ obtained removing all corks from $T$, replacing them by leaves. 
Then the characteristic map of $K^u_T$ is
$$\xymatrix{
K_{T'''}\times [0,1]^{m_b}\ar[rr]^-{\text{incl.}\times\partial^+_w}&&
K_{n+m}\times [0,1]^{m}\ar[r]^-{\bar f_S} &K_{n,m}^u\subset K_n^u%\subset \mathtt uA_\infty
}$$
where $S$ indicates which leaves of $T'''$ are corks in $T$ and 
$$
\partial^+_w=\partial^+_{j_{m_w}}\cdots\partial^+_{j_2}\partial^+_{j_1}
$$
where $\{j_1<j_2<\dots <j_{m_w}\}\subset[m]$ indicates the positions of the white corks in the set of corks.

%If $T$ has no corks then $K^u_T$ is a cell $K_T$ of the classical associahedron. If $T$ has one black and no white corks then $K_T^u$ is ...

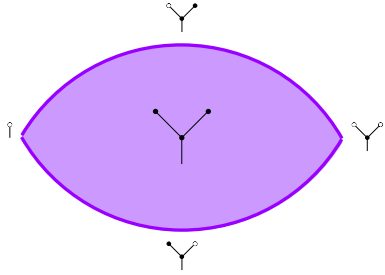
\begin{figure}[H]
$$\scalebox{.44} % Change this value to rescale the drawing.
{
\begin{pspicture}(0,-4.01)(11.854953,4.08)
\definecolor{color181}{rgb}{0.8,0.6,1.0}
\definecolor{color505}{rgb}{0.6,0.0,1.0}
\rput{88.798164}(8.510092,-3.0917294){\psarc[linewidth=0.04,linecolor=color181,fillstyle=solid,fillcolor=color181](5.8336816,2.799384){5.6}{120.25644}{239.74356}
\psline[linewidth=0.04,linecolor=color181](3.0120034,7.6365457)(3.0120034,-2.0377777)}
\rput{268.79816}(8.634888,2.8568678){\psarc[linewidth=0.04,linecolor=color181,fillstyle=solid,fillcolor=color181](5.7162247,-2.799384){5.6}{120.25644}{239.74356}
\psline[linewidth=0.04,linecolor=color181](2.8945467,2.0377777)(2.8945467,-7.6365457)}
\psline[linewidth=0.1cm,linecolor=color181](0.976009,0.10067715)(10.573897,-0.10067715)
\rput{89.613464}(8.554754,-2.9940288){\psarc[linewidth=0.1,linecolor=color505](5.784525,2.809317){5.6}{120.25644}{239.74356}}
\rput{269.42117}(8.595214,2.9277062){\psarc[linewidth=0.1,linecolor=color505](5.7467456,-2.7905555){5.6}{120.25644}{239.74356}}
\psline[linewidth=0.02cm](5.774953,-4.0)(5.774953,-3.6)
\psline[linewidth=0.02cm](5.774953,-3.6)(6.174953,-3.2)
\psline[linewidth=0.02cm](5.774953,-3.6)(5.3749533,-3.2)
\psline[linewidth=0.02cm](0.5749531,0.0)(0.5749531,0.4)
\psline[linewidth=0.02cm](5.774953,3.2)(5.774953,3.6)
\psline[linewidth=0.02cm](5.774953,3.6)(6.174953,4.0)
\psline[linewidth=0.02cm](5.774953,3.6)(5.3749533,4.0)
\psline[linewidth=0.02cm](11.374953,-0.4)(11.374953,0.0)
\psline[linewidth=0.02cm](11.374953,0.0)(11.774953,0.4)
\psline[linewidth=0.02cm](11.374953,0.0)(10.974953,0.4)
\psline[linewidth=0.02cm](5.769978,-0.7939801)(5.769978,0.0030099489)
\psline[linewidth=0.02cm](5.769978,0.0030099489)(6.565003,0.8)
\psline[linewidth=0.02cm](5.769978,0.0030099489)(4.974953,0.8)
\psdots[dotsize=0.16](5.774953,0.0)
\psdots[dotsize=0.16](4.974953,0.8)
\psdots[dotsize=0.16](6.574953,0.8)
\psdots[dotsize=0.14,dotstyle=o](0.5749531,0.4) % CORCHO BLANCO
%%%%%%%% \psdots[dotsize=0.14](0.5749531,0.4)  %%CORCHO NEGRO
\psdots[dotsize=0.14](5.774953,3.6)
\psdots[dotsize=0.14](11.374953,0.0)
\psdots[dotsize=0.14](5.774953,-3.6)
\psdots[dotsize=0.14](6.174953,4.0)
\psdots[dotsize=0.14](5.3749533,-3.2)
\psdots[dotsize=0.14,fillstyle=solid,dotstyle=o](10.974953,0.4)
\psdots[dotsize=0.14,fillstyle=solid,dotstyle=o](11.774953,0.4)
\psdots[dotsize=0.14,fillstyle=solid,dotstyle=o](5.3749533,4.0)
\psdots[dotsize=0.14,fillstyle=solid,dotstyle=o](6.174953,-3.2)
\end{pspicture} 
}
$$
\caption{$K^{u}_{0,2}$ with the new cellular structure. It has two vertices, two edges, and one $2$-cell.}\label{discochino}
\end{figure}

\begin{rem}
The CW-complex $K^{u}_{1,2}$ is a cellular decomposition of three $3$-balls such that the first and  second $3$-balls share a common edge, as well as the second and third $3$-balls. These two edges have a common vertex which is the intersection of the first and  third $3$-balls, as in the following picture
$$\includegraphics[scale=.4]{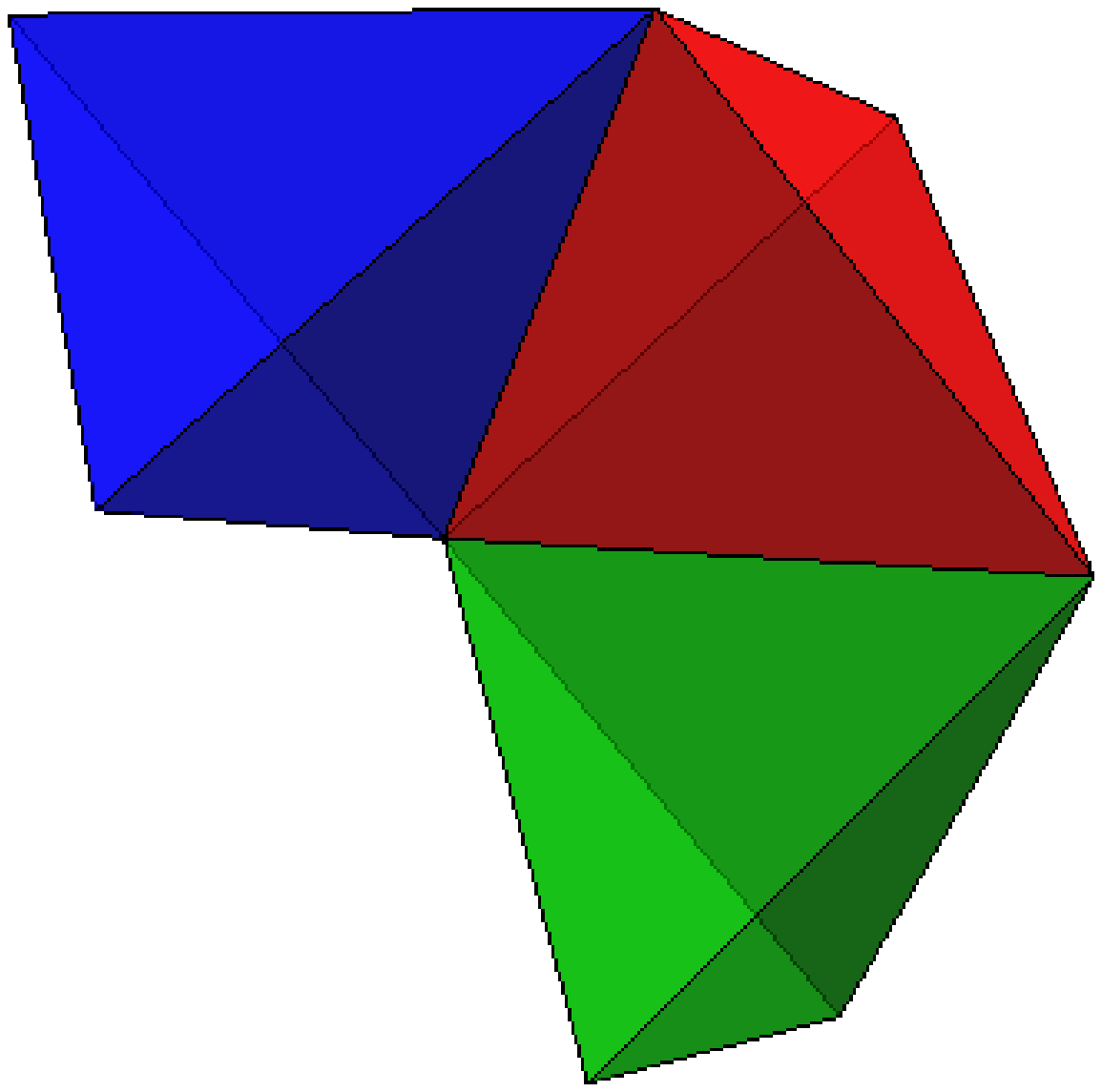}$$ %para B&W usar 3tetrahedra2grey.eps
The common edges and common vertex form the subcomplex $K^u_{1,1}\into K^u_{1,2}$, see Figure \ref{firstfour}.
For the sake of simplicity, we have drawn the $3$-balls as tetrahedra. 
Each of these $3$-balls in  $K^{u}_{1,2}$ is filled in with a $3$-cell, which correspond to the following trees, respectively:
$$\scalebox{.6} % Change this value to rescale the drawing.
{
\begin{pspicture}(0,-0.87)(1.72,0.87)
\psline[linewidth=0.04cm](0.9,-0.05)(0.9,-0.85)
\psline[linewidth=0.04cm](0.9,-0.05)(1.7,0.75)
\psline[linewidth=0.04cm](0.9,-0.05)(0.1,0.75)
\psline[linewidth=0.04cm](0.9,-0.05)(0.9,0.75)
\psdots[dotsize=0.2](0.1,0.75)
\psdots[dotsize=0.2](0.9,0.75)
\psdots[dotsize=0.2](0.9,-0.05)
\end{pspicture} 
}\qquad 
\scalebox{.6} % Change this value to rescale the drawing.
{
\begin{pspicture}(0,-0.87)(1.82,0.87)
\psline[linewidth=0.04cm](0.9,-0.05)(0.9,-0.85)
\psline[linewidth=0.04cm](0.9,-0.05)(1.7,0.75)
\psline[linewidth=0.04cm](0.9,-0.05)(0.1,0.75)
\psline[linewidth=0.04cm](0.9,-0.05)(0.9,0.75)
\psdots[dotsize=0.2](0.1,0.75)
\psdots[dotsize=0.2](1.7,0.75)
\psdots[dotsize=0.2](0.9,-0.05)
\end{pspicture} 
}
\qquad
\scalebox{.6} % Change this value to rescale the drawing.
{
\begin{pspicture}(0,-0.87)(1.72,0.87)
\psline[linewidth=0.04cm](0.8,-0.05)(0.8,-0.85)
\psline[linewidth=0.04cm](0.8,-0.05)(1.6,0.75)
\psline[linewidth=0.04cm](0.8,-0.05)(0.0,0.75)
\psline[linewidth=0.04cm](0.8,-0.05)(0.8,0.75)
\psdots[dotsize=0.2](0.8,0.75)
\psdots[dotsize=0.2](1.6,0.75)
\psdots[dotsize=0.2](0.8,-0.05)
\end{pspicture} 
}.$$
Let us examine more explicitly all the cells of $K^u_{1,2}$, corresponding to trees with one leaf and at most two black or white corks.

The cellular structure of the first $3$-ball is
$$\includegraphics[scale=.4]{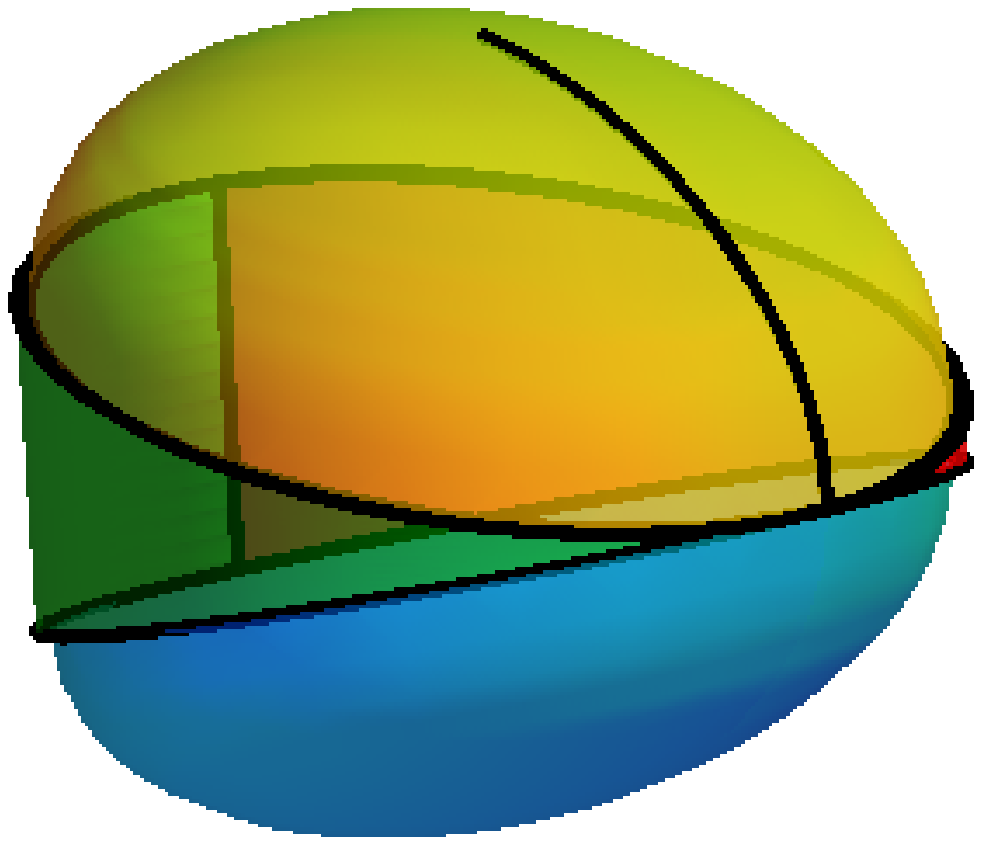}\vspace{-35pt}$$ %para B&W usar ku12grey.eps
\smallskip\par\noindent
It has four vertices, six edges,  four $2$-cells, and one $3$-cell. 
The four vertices correspond to the following trees
\smallskip

$$\scalebox{.6} % Change this value to rescale the drawing.
{
\begin{pspicture}(0,-1.27)(9.62,1.27)
\psline[linewidth=0.04cm](1.6,0.35)(2.4,-0.45)
\psline[linewidth=0.04cm](2.4,-0.45)(3.2,0.35)
\psline[linewidth=0.04cm](2.4,-0.45)(2.4,-1.25)
\psline[linewidth=0.04cm](4.0,1.15)(4.8,0.35)
\psline[linewidth=0.04cm](4.8,0.35)(5.6,-0.45)
\psline[linewidth=0.04cm](4.8,0.35)(5.6,1.15)
\psline[linewidth=0.04cm](5.6,-0.45)(6.4,0.35)
\psline[linewidth=0.04cm](5.6,-0.45)(5.6,-1.25)
\psline[linewidth=0.04cm](8.0,1.15)(8.8,0.35)
\psline[linewidth=0.04cm](7.2,0.35)(8.0,-0.45)
\psline[linewidth=0.04cm](8.8,0.35)(9.6,1.15)
\psline[linewidth=0.04cm](8.0,-0.45)(8.8,0.35)
\psline[linewidth=0.04cm](8.0,-0.45)(8.0,-1.25)
\psdots[dotsize=0.2](2.4,-0.45)
\psdots[dotsize=0.2](4.8,0.35)
\psdots[dotsize=0.2](5.6,-0.45)
\psdots[dotsize=0.2](8.0,-0.45)
\psdots[dotsize=0.2](8.8,0.35)
\psdots[dotsize=0.2,fillstyle=solid,dotstyle=o](1.6,0.35)
\psdots[dotsize=0.2,fillstyle=solid,dotstyle=o](4.0,1.15)
\psdots[dotsize=0.2,fillstyle=solid,dotstyle=o](8.0,1.15)
\psdots[dotsize=0.2,fillstyle=solid,dotstyle=o](5.6,1.15)
\psdots[dotsize=0.2,fillstyle=solid,dotstyle=o](7.2,0.35)
\psline[linewidth=0.04cm](0.0,0.35)(0.0,-1.25)
\end{pspicture} 
}.$$
\smallskip\par\noindent
The top $2$-cell, the two middle triangles, and the bottom disk, together with their six edges, can be represented as follows,  
\smallskip\par\noindent
$$
\scalebox{0.48} % Change this value to rescale the drawing.
{
\begin{pspicture}(0,-3.6058578)(24.336386,3.6058578)
\definecolor{color1762}{rgb}{0.0,0.0,0.6}
\psline[linewidth=0.04cm](12.791716,1.2141422)(12.791716,0.41414216)
\psline[linewidth=0.04cm](12.791716,1.2141422)(13.591716,2.014142)
\psline[linewidth=0.04cm](12.791716,1.2141422)(11.991715,2.014142)
\psline[linewidth=0.04cm](12.791716,1.2141422)(12.791716,2.014142)
\psdots[dotsize=0.2,fillstyle=solid,dotstyle=o](12.791716,2.014142)
\psdots[dotsize=0.2](11.991715,2.014142)
\psdots[dotsize=0.2](12.791716,1.2141422)
\psline[linewidth=0.04cm](12.791716,-1.5858579)(12.791716,-2.3858578)
\psline[linewidth=0.04cm](12.791716,-1.5858579)(13.591716,-0.78585786)
\psline[linewidth=0.04cm](12.791716,-1.5858579)(11.991715,-0.78585786)
\psline[linewidth=0.04cm](12.791716,-1.5858579)(12.791716,-0.78585786)
\psdots[dotsize=0.2,fillstyle=solid,dotstyle=o](11.991715,-0.78585786)
\psdots[dotsize=0.2](12.791716,-0.78585786)
\psdots[dotsize=0.2](12.791716,-1.5858579)
\psline[linewidth=0.04cm,linecolor=color1762](14.391716,3.214142)(14.391716,3.214142)
\psline[linewidth=0.04cm,linecolor=color1762](14.391716,3.214142)(15.191716,2.4141421)
\psline[linewidth=0.04cm,linecolor=color1762](14.791716,2.8141422)(15.191716,3.214142)
\psline[linewidth=0.04cm,linecolor=color1762](15.191716,2.4141421)(15.591716,2.8141422)
\psline[linewidth=0.04cm,linecolor=color1762](15.191716,2.4141421)(15.191716,2.014142)
\psdots[dotsize=0.2,linecolor=color1762](14.791716,2.8141422)
\psdots[dotsize=0.2,linecolor=color1762](15.191716,2.4141421)
\psdots[dotsize=0.2,linecolor=color1762](14.391716,3.214142)
\psdots[dotsize=0.2,linecolor=color1762,fillstyle=solid,dotstyle=o](15.191716,3.214142)
\psline[linewidth=0.04cm,linecolor=color1762](10.391716,3.214142)(10.791716,2.8141422)
\psline[linewidth=0.04cm,linecolor=color1762](9.991715,2.8141422)(10.391716,2.4141421)
\psline[linewidth=0.04cm,linecolor=color1762](10.391716,2.4141421)(11.191716,3.214142)
\psline[linewidth=0.04cm,linecolor=color1762](10.391716,2.4141421)(10.391716,2.014142)
\psdots[dotsize=0.2,linecolor=color1762](10.391716,2.4141421)
\psdots[dotsize=0.2,linecolor=color1762](10.791716,2.8141422)
\psdots[dotsize=0.2,linecolor=color1762](9.991715,2.8141422)
\psdots[dotsize=0.2,linecolor=color1762,fillstyle=solid,dotstyle=o](10.391716,3.214142)
\psline[linewidth=0.04cm,linecolor=color1762](10.391716,-2.3858578)(10.791716,-2.785858)
\psline[linewidth=0.04cm,linecolor=color1762](9.991715,-2.785858)(10.391716,-3.1858578)
\psline[linewidth=0.04cm,linecolor=color1762](10.391716,-3.1858578)(11.191716,-2.3858578)
\psline[linewidth=0.04cm,linecolor=color1762](10.391716,-3.1858578)(10.391716,-3.5858579)
\psdots[dotsize=0.2,linecolor=color1762](10.391716,-3.1858578)
\psdots[dotsize=0.2,linecolor=color1762](10.791716,-2.785858)
\psdots[dotsize=0.2,linecolor=color1762](10.391716,-2.3858578)
\psdots[dotsize=0.2,linecolor=color1762,fillstyle=solid,dotstyle=o](9.991715,-2.785858)
\psline[linewidth=0.04cm,linecolor=color1762](14.391716,-2.3858578)(15.191716,-3.1858578)
\psline[linewidth=0.04cm,linecolor=color1762](14.791716,-2.785858)(15.191716,-2.3858578)
\psline[linewidth=0.04cm,linecolor=color1762](15.191716,-3.1858578)(15.591716,-2.785858)
\psline[linewidth=0.04cm,linecolor=color1762](15.191716,-3.1858578)(15.191716,-3.5858579)
\psdots[dotsize=0.2,linecolor=color1762](14.791716,-2.785858)
\psdots[dotsize=0.2,linecolor=color1762](15.191716,-3.1858578)
\psdots[dotsize=0.2,linecolor=color1762](15.191716,-2.3858578)
\psdots[dotsize=0.2,linecolor=color1762,fillstyle=solid,dotstyle=o](14.391716,-2.3858578)
\psdiamond[linewidth=0.04,dimen=outer](12.791716,0.014142151)(3.6,3.6)
\psline[linewidth=0.04cm](9.231716,0.014142151)(16.351715,0.014142151)
\psline[linewidth=0.04cm,linecolor=color1762](13.591716,0.6141422)(13.991715,0.21414214)
\psline[linewidth=0.04cm,linecolor=color1762](13.991715,0.21414214)(13.991715,0.6141422)
\psline[linewidth=0.04cm,linecolor=color1762](13.991715,0.21414214)(14.391716,0.6141422)
\psline[linewidth=0.04cm,linecolor=color1762](13.991715,0.21414214)(13.991715,-0.18585785)
\psdots[dotsize=0.2,linecolor=color1762](13.991715,0.21414214)
\psdots[dotsize=0.2,linecolor=color1762,fillstyle=solid,dotstyle=o](13.591716,0.6141422)
\psdots[dotsize=0.2,linecolor=color1762,fillstyle=solid,dotstyle=o](13.991715,0.6141422)
\psline[linewidth=0.04cm](21.591715,-0.78585786)(21.591715,-1.5858579)
\psline[linewidth=0.04cm](21.591715,-0.78585786)(20.791716,0.014142151)
\psline[linewidth=0.04cm](21.591715,-0.78585786)(22.391716,0.014142151)
\psdots[dotsize=0.2](21.591715,-0.78585786)
\psline[linewidth=0.04cm](20.791716,0.014142151)(21.591715,0.81414217)
\psline[linewidth=0.04cm](20.791716,0.014142151)(19.991716,0.81414217)
\psdots[dotsize=0.2](20.791716,0.014142151)
\psdots[dotsize=0.2](19.991716,0.81414217)
\psdots[dotsize=0.2](21.591715,0.81414217)
\psarc[linewidth=0.04](21.319939,-1.77408){3.46}{30.0}{150.0}
\psarc[linewidth=0.04](21.319939,1.70592){3.46}{210.0}{330.0}
\psline[linewidth=0.04cm,linecolor=color1762](20.791716,-1.9858578)(21.591715,-2.785858)
\psline[linewidth=0.04cm,linecolor=color1762](21.191715,-2.3858578)(21.591715,-1.9858578)
\psline[linewidth=0.04cm,linecolor=color1762](21.591715,-2.785858)(21.991716,-2.3858578)
\psline[linewidth=0.04cm,linecolor=color1762](21.591715,-2.785858)(21.591715,-3.1858578)
\psdots[dotsize=0.2,linecolor=color1762](21.191715,-2.3858578)
\psdots[dotsize=0.2,linecolor=color1762](21.591715,-2.785858)
\psdots[dotsize=0.2,linecolor=color1762](21.591715,-1.9858578)
\psdots[dotsize=0.2,linecolor=color1762,fillstyle=solid,dotstyle=o](20.791716,-1.9858578)
\psline[linewidth=0.04cm,linecolor=color1762](20.791716,3.214142)(20.791716,3.214142)
\psline[linewidth=0.04cm,linecolor=color1762](20.791716,3.214142)(21.591715,2.4141421)
\psline[linewidth=0.04cm,linecolor=color1762](21.191715,2.8141422)(21.591715,3.214142)
\psline[linewidth=0.04cm,linecolor=color1762](21.591715,2.4141421)(21.991716,2.8141422)
\psline[linewidth=0.04cm,linecolor=color1762](21.591715,2.4141421)(21.591715,2.014142)
\psdots[dotsize=0.2,linecolor=color1762](21.191715,2.8141422)
\psdots[dotsize=0.2,linecolor=color1762](21.591715,2.4141421)
\psdots[dotsize=0.2,linecolor=color1762](20.791716,3.214142)
\psdots[dotsize=0.2,linecolor=color1762,fillstyle=solid,dotstyle=o](21.591715,3.214142)
\psline[linewidth=0.04cm](3.1917157,-0.38585785)(3.1917157,-1.1858579)
\psline[linewidth=0.04cm](3.1917157,-0.38585785)(3.9917157,0.41414216)
\psline[linewidth=0.04cm](3.1917157,-0.38585785)(2.3917158,0.41414216)
\psdots[dotsize=0.2](2.3917158,0.41414216)
\psdots[dotsize=0.2](3.1917157,-0.38585785)
\psline[linewidth=0.04cm](3.9917157,0.41414216)(3.1917157,1.2141422)
\psline[linewidth=0.04cm](3.9917157,0.41414216)(4.7917156,1.2141422)
\psdots[dotsize=0.2](3.1917157,1.2141422)
\psdots[dotsize=0.2](3.9917157,0.41414216)
\psline[linewidth=0.04cm,linecolor=color1762](1.1917157,2.8141422)(1.5917157,2.4141421)
\psline[linewidth=0.04cm,linecolor=color1762](1.5917157,2.4141421)(1.9917158,2.8141422)
\psline[linewidth=0.04cm,linecolor=color1762](1.5917157,2.4141421)(1.5917157,2.014142)
\psdots[dotsize=0.2,linecolor=color1762](1.5917157,2.4141421)
\psdots[dotsize=0.2,linecolor=color1762](1.1917157,2.8141422)
\psline[linewidth=0.04cm,linecolor=color1762](1.1917157,-2.3858578)(1.5917157,-2.785858)
\psline[linewidth=0.04cm,linecolor=color1762](1.5917157,-2.785858)(1.9917158,-2.3858578)
\psline[linewidth=0.04cm,linecolor=color1762](1.5917157,-2.785858)(1.5917157,-3.1858578)
\psdots[dotsize=0.2,linecolor=color1762](1.5917157,-2.785858)
\psdots[dotsize=0.2,linecolor=color1762](1.1917157,-2.3858578)
\psline[linewidth=0.04cm,linecolor=color1762](5.591716,3.214142)(5.991716,2.8141422)
\psline[linewidth=0.04cm,linecolor=color1762](5.1917157,2.8141422)(5.591716,2.4141421)
\psline[linewidth=0.04cm,linecolor=color1762](5.591716,2.4141421)(6.3917155,3.214142)
\psline[linewidth=0.04cm,linecolor=color1762](5.591716,2.4141421)(5.591716,2.014142)
\psdots[dotsize=0.2,linecolor=color1762](5.591716,2.4141421)
\psdots[dotsize=0.2,linecolor=color1762](5.991716,2.8141422)
\psdots[dotsize=0.2,linecolor=color1762](5.1917157,2.8141422)
\psdots[dotsize=0.2,linecolor=color1762,fillstyle=solid,dotstyle=o](5.591716,3.214142)
\psdiamond[linewidth=0.04,dimen=outer](3.5917158,0.014142151)(3.6,3.6)
\psline[linewidth=0.04cm,linecolor=color1762](5.591716,-2.3858578)(5.991716,-2.785858)
\psline[linewidth=0.04cm,linecolor=color1762](5.1917157,-2.785858)(5.591716,-3.1858578)
\psline[linewidth=0.04cm,linecolor=color1762](5.591716,-3.1858578)(6.3917155,-2.3858578)
\psline[linewidth=0.04cm,linecolor=color1762](5.591716,-3.1858578)(5.591716,-3.5858579)
\psdots[dotsize=0.2,linecolor=color1762](5.591716,-3.1858578)
\psdots[dotsize=0.2,linecolor=color1762](5.991716,-2.785858)
\psdots[dotsize=0.2,linecolor=color1762](5.591716,-2.3858578)
\psdots[dotsize=0.2,linecolor=color1762,fillstyle=solid,dotstyle=o](5.1917157,-2.785858)
\end{pspicture} 
}
$$
\smallskip\par\noindent
Observe that the top $2$-cell is doubly-incident with one of its edges, which contains the vertex corresponding to the trivial tree $\|T\|=|$.
It is also along this edge that the first and second $3$-balls are identified.

The cellular structure of the third $3$-ball is the same, and its cells correspond to symmetric versions of the trees above.

The second $3$-ball looks different. We give two different perspectives:\vspace{-5pt}
$$\includegraphics[scale=.4]{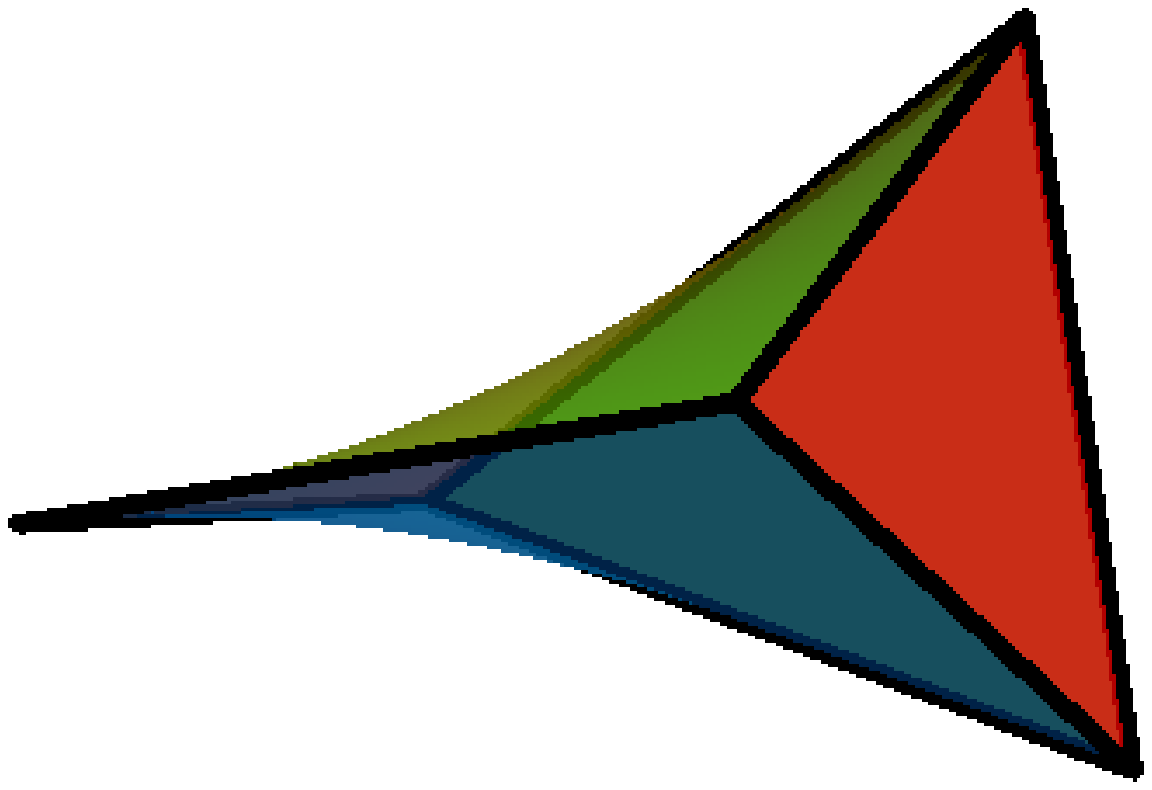}\includegraphics[scale=.3]{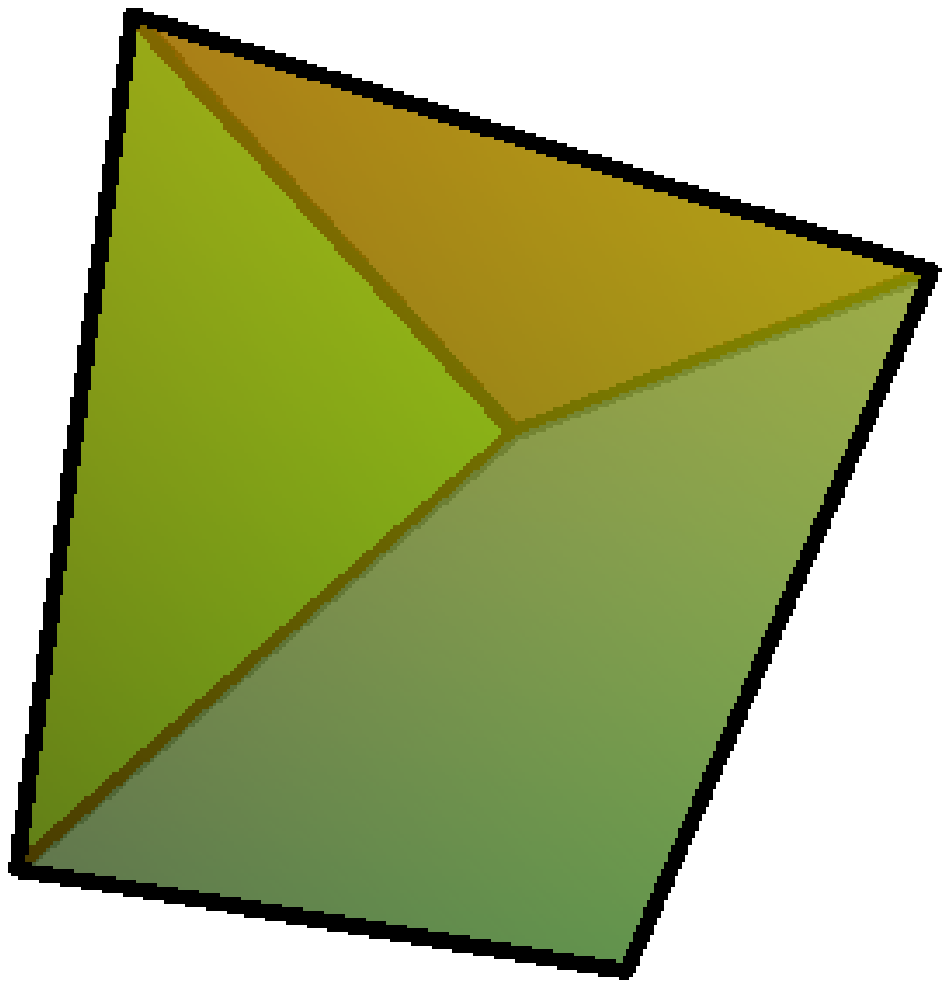}\vspace{-5pt}$$ %para B&W usar ku122grey.eps y ku1222grey.eps
\smallskip\par\noindent
It has five vertices, seven edges,  four $2$-cells, and one $3$-cell.  
The vertices correspond to the following trees
\smallskip\par\noindent
$$
\scalebox{.6} % Change this value to rescale the drawing.
{
\begin{pspicture}(0,-1.27)(12.12,1.27)
\psline[linewidth=0.04cm](9.6,0.35)(10.4,-0.45)
\psline[linewidth=0.04cm](10.4,-0.45)(11.2,0.35)
\psline[linewidth=0.04cm](10.4,-0.45)(10.4,-1.25)
\psdots[dotsize=0.2](10.4,-0.45)
\psdots[dotsize=0.2](11.2,0.35)
\psline[linewidth=0.04cm](11.2,0.35)(10.4,1.15)
\psline[linewidth=0.04cm](7.2,0.35)(8.0,-0.45)
\psline[linewidth=0.04cm](8.0,-0.45)(8.8,0.35)
\psline[linewidth=0.04cm](8.0,-0.45)(8.0,-1.25)
\psdots[dotsize=0.2](8.0,-0.45)
\psdots[dotsize=0.2](7.2,0.35)
\psline[linewidth=0.04cm](7.2,0.35)(8.0,1.15)
\psline[linewidth=0.04cm](1.6,0.35)(2.4,-0.45)
\psline[linewidth=0.04cm](2.4,-0.45)(3.2,0.35)
\psline[linewidth=0.04cm](2.4,-0.45)(2.4,-1.25)
\psdots[dotsize=0.2](2.4,-0.45)
\psline[linewidth=0.04cm](4.0,0.35)(4.8,-0.45)
\psline[linewidth=0.04cm](4.8,-0.45)(5.6,0.35)
\psline[linewidth=0.04cm](4.8,-0.45)(4.8,-1.25)
\psdots[dotsize=0.2](4.8,-0.45)
\psdots[dotsize=0.2,fillstyle=solid,dotstyle=o](8.8,0.35)
\psline[linewidth=0.04cm](11.2,0.35)(12.0,1.15)
\psdots[dotsize=0.2,fillstyle=solid,dotstyle=o](9.6,0.35)
\psdots[dotsize=0.2,fillstyle=solid,dotstyle=o](1.6,0.35)
\psdots[dotsize=0.2,fillstyle=solid,dotstyle=o](5.6,0.35)
\psline[linewidth=0.04cm](7.2,0.35)(6.4,1.15)
\psdots[dotsize=0.2,fillstyle=solid,dotstyle=o](6.4,1.15)
\psdots[dotsize=0.2,fillstyle=solid,dotstyle=o](12.0,1.15)
\psline[linewidth=0.04cm](0.0,0.35)(0.0,-1.25)
\end{pspicture} 
}
.$$
\smallskip\par\noindent
The four $2$-cells occur in 2 symmetric pairs of (curved) squares and triangles. We can represent them, and their seven edges, as follows:

\bigskip\par\noindent
$$
\scalebox{0.4} % Change this value to rescale the drawing.
{
\begin{pspicture}(0,-3.61)(25.583431,3.6017158)
\definecolor{color1762}{rgb}{0.0,0.0,0.6}
\psline[linewidth=0.04cm](12.791716,1.21)(12.791716,0.41)
\psline[linewidth=0.04cm](12.791716,1.21)(13.591716,2.01)
\psline[linewidth=0.04cm](12.791716,1.21)(11.991715,2.01)
\psline[linewidth=0.04cm](12.791716,1.21)(12.791716,2.01)
\psdots[dotsize=0.2,fillstyle=solid,dotstyle=o](13.591716,2.01)
\psdots[dotsize=0.2](11.991715,2.01)
\psdots[dotsize=0.2](12.791716,1.21)
\psline[linewidth=0.04cm](12.791716,-1.59)(12.791716,-2.39)
\psline[linewidth=0.04cm](12.791716,-1.59)(13.591716,-0.79)
\psline[linewidth=0.04cm](12.791716,-1.59)(11.991715,-0.79)
\psline[linewidth=0.04cm](12.791716,-1.59)(12.791716,-0.79)
\psdots[dotsize=0.2,fillstyle=solid,dotstyle=o](11.991715,-0.79)
\psdots[dotsize=0.2](13.591716,-0.79)
\psdots[dotsize=0.2](12.791716,-1.59)
\psline[linewidth=0.04cm,linecolor=color1762](14.391716,3.21)(14.391716,3.21)
\psline[linewidth=0.04cm,linecolor=color1762](14.391716,3.21)(15.191716,2.41)
\psline[linewidth=0.04cm,linecolor=color1762](14.791716,2.81)(15.191716,3.21)
\psline[linewidth=0.04cm,linecolor=color1762](15.191716,2.41)(15.591716,2.81)
\psline[linewidth=0.04cm,linecolor=color1762](15.191716,2.41)(15.191716,2.01)
\psdots[dotsize=0.2,linecolor=color1762](14.791716,2.81)
\psdots[dotsize=0.2,linecolor=color1762](15.191716,2.41)
\psdots[dotsize=0.2,linecolor=color1762](14.391716,3.21)
\psdots[dotsize=0.2,linecolor=color1762,fillstyle=solid,dotstyle=o](15.591716,2.81)
\psline[linewidth=0.04cm,linecolor=color1762](10.391716,3.21)(10.791716,2.81)
\psline[linewidth=0.04cm,linecolor=color1762](9.991715,2.81)(10.391716,2.41)
\psline[linewidth=0.04cm,linecolor=color1762](10.391716,2.41)(11.191716,3.21)
\psline[linewidth=0.04cm,linecolor=color1762](10.391716,2.41)(10.391716,2.01)
\psdots[dotsize=0.2,linecolor=color1762](10.391716,2.41)
\psdots[dotsize=0.2,linecolor=color1762](10.791716,2.81)
\psdots[dotsize=0.2,linecolor=color1762](9.991715,2.81)
\psdots[dotsize=0.2,linecolor=color1762,fillstyle=solid,dotstyle=o](11.191716,3.21)
\psline[linewidth=0.04cm,linecolor=color1762](10.391716,-2.39)(10.791716,-2.79)
\psline[linewidth=0.04cm,linecolor=color1762](9.991715,-2.79)(10.391716,-3.19)
\psline[linewidth=0.04cm,linecolor=color1762](10.391716,-3.19)(11.191716,-2.39)
\psline[linewidth=0.04cm,linecolor=color1762](10.391716,-3.19)(10.391716,-3.59)
\psdots[dotsize=0.2,linecolor=color1762](10.391716,-3.19)
\psdots[dotsize=0.2,linecolor=color1762](10.791716,-2.79)
\psdots[dotsize=0.2,linecolor=color1762](11.191716,-2.39)
\psdots[dotsize=0.2,linecolor=color1762,fillstyle=solid,dotstyle=o](9.991715,-2.79)
\psline[linewidth=0.04cm,linecolor=color1762](14.391716,-2.39)(15.191716,-3.19)
\psline[linewidth=0.04cm,linecolor=color1762](14.791716,-2.79)(15.191716,-2.39)
\psline[linewidth=0.04cm,linecolor=color1762](15.191716,-3.19)(15.591716,-2.79)
\psline[linewidth=0.04cm,linecolor=color1762](15.191716,-3.19)(15.191716,-3.59)
\psdots[dotsize=0.2,linecolor=color1762](14.791716,-2.79)
\psdots[dotsize=0.2,linecolor=color1762](15.191716,-3.19)
\psdots[dotsize=0.2,linecolor=color1762](15.591716,-2.79)
\psdots[dotsize=0.2,linecolor=color1762,fillstyle=solid,dotstyle=o](14.391716,-2.39)
\psdiamond[linewidth=0.04,dimen=outer](12.791716,0.01)(3.6,3.6)
\psline[linewidth=0.04cm](9.231716,0.01)(16.351715,0.01)
\psline[linewidth=0.04cm,linecolor=color1762](13.591716,0.61)(13.991715,0.21)
\psline[linewidth=0.04cm,linecolor=color1762](13.991715,0.21)(13.991715,0.61)
\psline[linewidth=0.04cm,linecolor=color1762](13.991715,0.21)(14.391716,0.61)
\psline[linewidth=0.04cm,linecolor=color1762](13.991715,0.21)(13.991715,-0.19)
\psdots[dotsize=0.2,linecolor=color1762](13.991715,0.21)
\psdots[dotsize=0.2,linecolor=color1762,fillstyle=solid,dotstyle=o](13.591716,0.61)
\psdots[dotsize=0.2,linecolor=color1762,fillstyle=solid,dotstyle=o](14.391716,0.61)
\psline[linewidth=0.04cm](3.1917157,-0.39)(3.1917157,-1.19)
\psline[linewidth=0.04cm](3.1917157,-0.39)(3.9917157,0.41)
\psline[linewidth=0.04cm](3.1917157,-0.39)(2.3917158,0.41)
\psdots[dotsize=0.2](2.3917158,0.41)
\psdots[dotsize=0.2](3.1917157,-0.39)
\psline[linewidth=0.04cm](3.9917157,0.41)(3.1917157,1.21)
\psline[linewidth=0.04cm](3.9917157,0.41)(4.7917156,1.21)
\psdots[dotsize=0.2](4.7917156,1.21)
\psdots[dotsize=0.2](3.9917157,0.41)
\psline[linewidth=0.04cm,linecolor=color1762](1.1917157,2.81)(1.5917157,2.41)
\psline[linewidth=0.04cm,linecolor=color1762](1.5917157,2.41)(1.9917158,2.81)
\psline[linewidth=0.04cm,linecolor=color1762](1.5917157,2.41)(1.5917157,2.01)
\psdots[dotsize=0.2,linecolor=color1762](1.5917157,2.41)
\psdots[dotsize=0.2,linecolor=color1762](1.9917158,2.81)
\psline[linewidth=0.04cm,linecolor=color1762](1.1917157,-2.79)(1.5917157,-3.19)
\psline[linewidth=0.04cm,linecolor=color1762](1.5917157,-3.19)(1.9917158,-2.79)
\psline[linewidth=0.04cm,linecolor=color1762](1.5917157,-3.19)(1.5917157,-3.59)
\psdots[dotsize=0.2,linecolor=color1762](1.5917157,-3.19)
\psdots[dotsize=0.2,linecolor=color1762](1.1917157,-2.79)
\psline[linewidth=0.04cm,linecolor=color1762](5.591716,3.21)(5.991716,2.81)
\psline[linewidth=0.04cm,linecolor=color1762](5.1917157,2.81)(5.591716,2.41)
\psline[linewidth=0.04cm,linecolor=color1762](5.591716,2.41)(6.3917155,3.21)
\psline[linewidth=0.04cm,linecolor=color1762](5.591716,2.41)(5.591716,2.01)
\psdots[dotsize=0.2,linecolor=color1762](5.591716,2.41)
\psdots[dotsize=0.2,linecolor=color1762](5.991716,2.81)
\psdots[dotsize=0.2,linecolor=color1762](5.1917157,2.81)
\psdots[dotsize=0.2,linecolor=color1762,fillstyle=solid,dotstyle=o](6.3917155,3.21)
\psdiamond[linewidth=0.04,dimen=outer](3.5917158,0.01)(3.6,3.6)
\psline[linewidth=0.04cm,linecolor=color1762](5.591716,-2.39)(5.991716,-2.79)
\psline[linewidth=0.04cm,linecolor=color1762](5.1917157,-2.79)(5.591716,-3.19)
\psline[linewidth=0.04cm,linecolor=color1762](5.591716,-3.19)(6.3917155,-2.39)
\psline[linewidth=0.04cm,linecolor=color1762](5.591716,-3.19)(5.591716,-3.59)
\psdots[dotsize=0.2,linecolor=color1762](5.591716,-3.19)
\psdots[dotsize=0.2,linecolor=color1762](5.991716,-2.79)
\psdots[dotsize=0.2,linecolor=color1762](6.3917155,-2.39)
\psdots[dotsize=0.2,linecolor=color1762,fillstyle=solid,dotstyle=o](5.1917157,-2.79)
\psline[linewidth=0.04cm](22.391716,-0.39)(22.391716,-1.19)
\psline[linewidth=0.04cm](22.391716,-0.39)(21.591715,0.41)
\psline[linewidth=0.04cm](22.391716,-0.39)(23.191715,0.41)
\psdots[dotsize=0.2](23.191715,0.41)
\psdots[dotsize=0.2](22.391716,-0.39)
\psline[linewidth=0.04cm](21.591715,0.41)(22.391716,1.21)
\psline[linewidth=0.04cm](21.591715,0.41)(20.791716,1.21)
\psdots[dotsize=0.2](20.791716,1.21)
\psdots[dotsize=0.2](21.591715,0.41)
\psline[linewidth=0.04cm,linecolor=color1762](24.391716,-2.79)(23.991716,-3.19)
\psline[linewidth=0.04cm,linecolor=color1762](23.991716,-3.19)(23.591715,-2.79)
\psline[linewidth=0.04cm,linecolor=color1762](23.991716,-3.19)(23.991716,-3.59)
\psdots[dotsize=0.2,linecolor=color1762](23.991716,-3.19)
\psdots[dotsize=0.2,linecolor=color1762](23.591715,-2.79)
\psline[linewidth=0.04cm,linecolor=color1762](24.391716,2.81)(23.991716,2.41)
\psline[linewidth=0.04cm,linecolor=color1762](23.991716,2.41)(23.591715,2.81)
\psline[linewidth=0.04cm,linecolor=color1762](23.991716,2.41)(23.991716,2.01)
\psdots[dotsize=0.2,linecolor=color1762](23.991716,2.41)
\psdots[dotsize=0.2,linecolor=color1762](24.391716,2.81)
\psline[linewidth=0.04cm,linecolor=color1762](19.591715,-2.39)(19.191715,-2.79)
\psline[linewidth=0.04cm,linecolor=color1762](19.991716,-2.79)(19.591715,-3.19)
\psline[linewidth=0.04cm,linecolor=color1762](19.591715,-3.19)(18.791716,-2.39)
\psline[linewidth=0.04cm,linecolor=color1762](19.591715,-3.19)(19.591715,-3.59)
\psdots[dotsize=0.2,linecolor=color1762](19.591715,-3.19)
\psdots[dotsize=0.2,linecolor=color1762](19.191715,-2.79)
\psdots[dotsize=0.2,linecolor=color1762](19.991716,-2.79)
\psdots[dotsize=0.2,linecolor=color1762,fillstyle=solid,dotstyle=o](18.791716,-2.39)
\psdiamond[linewidth=0.04,dimen=outer](21.991716,0.01)(3.6,3.6)
\psline[linewidth=0.04cm,linecolor=color1762](19.991716,3.21)(19.591715,2.81)
\psline[linewidth=0.04cm,linecolor=color1762](20.391716,2.81)(19.991716,2.41)
\psline[linewidth=0.04cm,linecolor=color1762](19.991716,2.41)(19.191715,3.21)
\psline[linewidth=0.04cm,linecolor=color1762](19.991716,2.41)(19.991716,2.01)
\psdots[dotsize=0.2,linecolor=color1762](19.991716,2.41)
\psdots[dotsize=0.2,linecolor=color1762](19.591715,2.81)
\psdots[dotsize=0.2,linecolor=color1762](19.191715,3.21)
\psdots[dotsize=0.2,linecolor=color1762,fillstyle=solid,dotstyle=o](20.391716,2.81)
\end{pspicture} 
}
$$
Again we see the extreme left (or right) edges and their common vertex form the subcomplex $K^u_{1,1}$. One of these edges is attached to the first $3$-ball and the other to the third $3$-ball. The common vertex is the subcomplex $K^u_{1,0}$; it is the only cell shared by all three $3$-balls.

\end{rem}

\section{Cellular chains of unital associahedra}

Let us fix  a commutative ring $\Bbbk$ for the whole section. In this section we always consider associahedra with the polytope cell structure, and unital associahedra with the new cell structure with cells obtained from products of associahedra and hypercubes as described in Proposition~\ref{puchau}.

Associahedra admit orientations such that the DG-operad $C_*(\mathtt{A}_\infty,\Bbbk)$ obtained by taking cellular chains on $\mathtt{A}_\infty$ is freely generated as a graded operad by the fundamental classes $\mu_{n}=[K_{n}]\in C_{n-2}(K_{n},\Bbbk)$, $n\geq 2$, and the differential is
\begin{equation}\label{dm}d(\mu_{n})=\sum_{
\begin{array}{c}\\[-15pt]
\scriptstyle p+q-1=n\\[-1.5mm]
\scriptstyle 1\leq i\leq p
\end{array}
} 
(-1)^{
q%(
p%+s)
+(q-1)(i-1)
%qp-(q-1)(i+r-1)+t(r-1)
}
\mu_{p}\circ_{i}\mu_{q}.
\end{equation}
This is the operad for $A_{\infty}$-algebras, cf.\ \cite[Proposition 12.3]{fcplt} and \cite[\S 3]{acwcpainfa}. Our slightly different sign conventions only indicate that we may be choosing different orientations on some associahedra.

Denote $\mathtt{O}$ the DG-operad whose underlying graded operad is freely generated by the operations
$$\mu_{n+m}^{S}\in\mathtt{O}(n),\quad n,m\geq 0,\quad S\subset\ord{n+m},\quad |S|=m,\quad (n,m)\neq (0,0), (1,0),$$
of degree $$|\mu_{n+m}^{S}|=2m+n-2.$$
The differential is defined as follows, $(n,m)\neq (1,1)$:
\begin{align}\label{dmu}d(\mu_{n+m}^{S})&=\sum_{
\begin{array}{c}\\[-15pt]
\scriptstyle p+q-1=n\\[-1.5mm]
\scriptstyle s+t=m\\[-1mm]
\scriptstyle 1\leq i\leq p\\[-1mm]
\scriptstyle S_{1}\circ_{i}S_{2}=S
\end{array}
} 
(-1)^{
(q+t)%(
p%+s)
+(q+t-1)(i+r-1)+t(r-1)
%qp-(q-1)(i+r-1)+t(r-1)
}
\mu_{p+s}^{S_{1}}\circ_{i}\mu_{q+t}^{S_{2}},\\
\nonumber d(\mu_{1+1}^{\{i\}})&=\mu_{2+0}^{\varnothing}\circ_i\mu_{0+1}^{\{1\}}-u,\qquad i\in\{1,2\}.
%\nonumber d(\mu_{1+1}^{\{2\}})&=\mu_{2+0}^{\varnothing}\circ_2\mu_{0+1}^{\{1\}}-\id.
\end{align}
%where $t=\card S_2$ and
Here $r$ is such that the $i^{\text{th}}$ element of the complement of $S_1$ lies between the
$(r-1)^{\text{st}}$ and $r^{\text{th}}$ elements of $S_1$.

We may also describe a $\Bbbk$-linear basis $\{\mu_T\}_{T}$ of $\mathtt O$ indexed by trees $T$ with black and white corks. 
Let $T$ be a tree of height $h$, with $n$ leaves, and $m$ corks, of which $m_b$ are black and $m_w$ are white. We exclude trees with any degree two vertices, and also the case $h=m_b=1$. 
\begin{enumerate}\item If $h=2$ and $m_w=0$ then we have
$$
\mu_T=\mu_{n+m}^S
$$
where $S\subseteq\ord{n+m}$ indicates which level 2 vertices are corks.
\item If $h=1$ and $m_w=1$,
 i.e.\ $T=\raisebox{-0.4ex}[0ex][-0.4ex]{\begin{sideways}$\multimap$\end{sideways}}$,
$$
\mu_T=\mu_{0+1}^{\{1\}}.
$$ 
\item If $T=T'\circ_i T''$ we define inductively
$$
\mu_T=\mu_{T'}\circ_i \mu_{T''}.
$$ 
\end{enumerate}
We observe that $\mu_T$ is an element of degree $2m+n-2-\card{I(T)}+m_b$ 
in $\mathtt O(n)$.

Over a field of characteristic zero, this operad $\mathtt O$, as pointed out in \cite{ckdt}, is the operad for homotopy unital $A_{\infty}$-algebras in the sense of Fukaya--Oh--Ohta--Ono   \cite{fooo1,fooo2}. In the next theorem we show that $\mathtt O$ is just the chains on the topological operad $\mathtt{uA}_{\infty}$ formed by the unital associahedra; this construction works over any commutative ring.

\begin{thm}
There is an isomorphism of differential graded operads 
\begin{align*}
\mathtt O&\st{\cong}\To C_*(\mathtt{uA}_\infty,\Bbbk),\\
\mu_{0+1}^{\{1\}}&\;\mapsto\; [K^u_{0,1}],
\\\mu_{n+m}^S&\;\mapsto\; (-1)^{m(m-1)/2}[\bar f_S],\qquad(n,m)\neq(0,1).
\end{align*}
\end{thm}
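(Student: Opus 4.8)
The plan is to prove the statement in two stages: first that the displayed assignment extends to an isomorphism of \emph{graded} operads, and then that it commutes with the differentials. For the graded statement, observe that $C_*(\mathtt{uA}_\infty,\Bbbk)$ is, like $\mathtt O$, a \emph{free} graded operad. Indeed, by the cork filtration and Proposition \ref{puchau} the cells of $K^u_n$ are indexed by the trees $T$ with black and white corks, and the inductive description of $K^u_T$ (cases (1)--(3) above) exhibits every cell as an iterated operadic composite of the \emph{atomic} cells $[\bar f_S]$ (height $2$, all corks black) and $[K^u_{0,1}]$ (the single white cork). Uniqueness of the decomposition of a tree into its atomic pieces shows that the cell basis coincides with the tree-monomial basis of the free operad on these atoms, so $C_*(\mathtt{uA}_\infty,\Bbbk)$ is free on $\{[\bar f_S]\}\cup\{[K^u_{0,1}]\}$. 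Since $\dim(K_{n+m}\times[0,1]^m)=2m+n-2=|\mu^S_{n+m}|$ and $\dim K^u_{0,1}=0=|\mu^{\{1\}}_{0+1}|$, the assignment is degree-preserving; the normalising signs $(-1)^{m(m-1)/2}$ being units, it extends uniquely to an isomorphism of graded operads.

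It remains to check that this isomorphism is a chain map. The cellular boundary on $C_*(\mathtt{uA}_\infty,\Bbbk)$ is a derivation for the operad structure, and the differential of $\mathtt O$ is a derivation by construction, so it suffices to verify compatibility on generators. As $[K^u_{0,1}]$ is a $0$-cell we have $d[K^u_{0,1}]=0=d\mu^{\{1\}}_{0+1}$, and only the computation of $d[\bar f_S]$ for the atomic cells with $(n,m)\neq(0,1)$ remains. For this I would use the factorisation $[\bar f_S]=\bar f_{S*}\big([K_{n+m}]\otimes[[0,1]^m]\big)$ through the Eilenberg--Zilber equivalence and the Leibniz rule
$$d\big([K_{n+m}]\otimes[[0,1]^m]\big)=d[K_{n+m}]\otimes[[0,1]^m]+(-1)^{n+m-2}[K_{n+m}]\otimes d[[0,1]^m].$$

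The associahedral part uses \eqref{dm}: each codimension one face $K_a\circ_j K_b$ of $K_{n+m}$, pushed forward by $\bar f_S$, is the composite $\mu^{S_1}_{p+s}\circ_i\mu^{S_2}_{q+t}$ with $q+t\geq 2$, dictated by the splitting $S_1\circ_i S_2=S$ of Proposition \ref{filtri} and diagram \eqref{filtr1}; these are exactly the terms of \eqref{dmu} with $q+t\geq 2$. The cube part splits into positive and negative faces. A positive face $\partial^+_j$ turns the $j$-th black cork white and, by diagram \eqref{filtr2}, produces precisely a term $\mu^{S_1}_{(n+1)+(m-1)}\circ_i\mu^{\{1\}}_{0+1}$, i.e.\ the $(q,t)=(0,1)$ terms of \eqref{dmu}; a degree count confirms these, together with the associahedral terms, exhaust the sum. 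A negative face $\partial^-_j$ is glued by the relations (R2): by Proposition \ref{puchau}(2) it is carried by a degeneracy or connection $\eps$ onto a cell of $H_{T^{\bullet S}\backslash e}$ of dimension $2m+n-4=d-2$, hence contributes $0$ to the cellular boundary---\emph{except} in the single case $(n,m)=(1,1)$, where $H_{T^{\bullet S}\backslash e}=H_|$ is the unit $0$-cell and the negative face contributes $-u$. This reproduces both lines of \eqref{dmu}, the second being the special formula for $d\mu^{\{i\}}_{1+1}$.

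The main obstacle is the sign bookkeeping. One must reconcile the orientation signs of the associahedra in \eqref{dm}, the product sign $(-1)^{n+m-2}$, the alternating signs of the cube faces, the Koszul signs coming from the shuffle $\sigma_k$ in the operadic composition of Proposition \ref{circi} and from the Eilenberg--Zilber map used to form the chain operad, and finally the reordering of cube coordinates in the shuffle permutation of Proposition \ref{puchau}. The normalisation $(-1)^{m(m-1)/2}$ is exactly what is needed to make these coordinate-reordering signs additive under the splitting $m=s+t$; tracking it through the faces above should yield the exponent $(q+t)p+(q+t-1)(i+r-1)+t(r-1)$ of \eqref{dmu}. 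Verifying that all of these combine to the stated sign, uniformly in $(p,q,s,t,i,r)$, is the delicate part of the argument; everything else is a direct identification of boundary faces with operadic composites.
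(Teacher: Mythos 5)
Your proposal follows essentially the same route as the paper: freeness of both graded operads on the height-two corked generators, reduction to generators via the derivation property, the Leibniz expansion of $d([K_{n+m}]\otimes I^{\otimes m})$, identification of associahedral faces with the $(q,t)\neq(0,1)$ terms via \eqref{filtr1}, positive cube faces with the $(q,t)=(0,1)$ terms via \eqref{filtr2}, and vanishing of negative cube faces except for the $-u$ term when $(n,m)=(1,1)$. The only part you leave as a claim rather than a verification is the sign bookkeeping, but you correctly isolate the decisive identity $(-1)^{st}=(-1)^{m(m-1)/2}(-1)^{s(s-1)/2}(-1)^{t(t-1)/2}$, which is exactly how the paper reconciles the normalisation with the splitting $m=s+t$.
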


\begin{proof}
Since $\mathtt O$ is free as a graded operad, the underlying graded operad morphism is well defined. The descriptions above for the $\Bbbk$-basis of $\mathtt O$ and for the cellular structure of unital associahedra show that it is a graded isomorphism. In order to show that it also commutes with the differentials we choose the orientations $I\in C_{1}([0,1],\Bbbk)$ with $d(I)=[1]-[0]$, and
$$[K_{n+m}\times [0,1]^m]=[K_{n+m}]\otimes I^{\otimes m};$$
and 
 observe that
\begin{align*}
%{}[K_{n+m}\times [0,1]^m]={}&[K_{n+m}]\otimes I^{\otimes m},
%\\
d([K_{n+m}\times [0,1]^m])={}&
\sum_{\begin{array}{c}\\[-5mm]\scriptstyle\alpha+\beta-1=n+m\\[-1.5mm]\scriptstyle\alpha,\beta\geq 2\\[-1.5mm] \scriptstyle1\leq\kappa\leq \alpha\end{array}}
(-1)^{\beta\alpha+(\beta-1)\kappa}
([K_\alpha]\circ_\kappa[K_\beta])\otimes I^{\otimes m}
\\
&\hspace{-10pt}{}+\sum_{\lambda=1}^m(-1)^{n+m+\lambda-1}[K_{n+m}]
\otimes I^{\otimes(\lambda-1)}\otimes([1]-[0])\otimes I^{\otimes(m-\lambda)}.
\end{align*}
The image under 
$({\bar f_S})_*$ (we denote  morphisms induced by cellular maps on cellular chains in this way) of
the first summation may be identified with
$$\sum_{
\begin{array}{c}\\[-5mm]
\scriptstyle p+q-1=n\\[-1.5mm]
\scriptstyle s+t=m\\[-1mm]
\scriptstyle (q,t)\neq(0,1)\\[-1mm]
\scriptstyle 1\leq i\leq p\\[-1mm]
\scriptstyle S_{1}\circ_{i}S_{2}=S
\end{array}
} 
(-1)^{
(q+t)(p+s)+(q+t-1)(i+r-1)+t(s-r+1)+s(q+t)
}
[\bar f_{S_{1}}]\circ_{i}[\bar f_{S_{2}}]$$
%$$
%\sum_{\begin{array}{c}\scriptstyle\alpha+\beta-1=n+m\\ \scriptstyle1\leq\kappa\leq \alpha\end{array}}
%(-1)^{\alpha\beta+\kappa(\beta-1)}
%([K_\alpha]\circ_\kappa[K_\beta])\otimes I^{\otimes m}
%$$
by the commutativity of \eqref{filtr1} in Proposition \ref{filtri}.
Here $\alpha=p+s$, $\beta=q+t$, $\kappa=i+r-1$, and furthermore  $(-1)^{t(s-r+1)}$ and  $(-1)^{s(q+t)}$ are the signs associated to the maps $\sigma_{
%s,t,
r}$ \eqref{sigma} and the swap in \eqref{filtr1}. This sign differs from that in \eqref{dmu} by precisely 
$$
(-1)^{st}\;=\;(-1)^{m(m-1)/2}\cdot(-1)^{s(s-1)/2}\cdot(-1)^{t(t-1)/2}.
$$

In the second summation, if $(n,m)\neq (1,1)$ the terms containing $[0]$ vanish on applying $({\bar f_S})_*$ by 
Proposition \ref{puchau}, since the pushout identifies $K_{n+m}\times\partial_\lambda^-([0,1]^{m-1})$ with lower dimensional cells. 
If $(n,m)=(1,1)$ the image of the term containing $[0]$ is 
$-u$, see $K^{u}_{1,1}$ in Figure \ref{firstfour}. The image of the terms containing $[1]$ are identified with 
$$\sum_{
\begin{array}{c}
%\scriptstyle p-1=n\\[-1.5mm]
%\scriptstyle s+1=m\\[-1mm]
%\scriptstyle (t,q)\neq(1,0)\\[-1mm]
\\[-5mm]
\scriptstyle 1\leq i\leq p\\[-.5mm]
\scriptstyle S_{1}\circ_{i}\{1\}=S
\end{array}
} 
(-1)^{
p+s+r-1
}
[\bar f_{S_{1}}]\circ_{i}[K^u_{0,1}]$$
%$$
%\sum_{\begin{array}{c}\scriptstyle\alpha+\beta-1=n+m\\ \scriptstyle1\leq\kappa\leq \alpha\end{array}}
%(-1)^{\alpha\beta+\kappa(\beta-1)}
%([K_\alpha]\circ_\kappa[K_\beta])\otimes I^{\otimes m}
%$$
using the diagram \eqref{filtr2} in Proposition \ref{filtri}.
Here $p-1=n$, $s+1=m$ and $r=\lambda$.
 This sign differs from that in \eqref{dmu} for $(q,t)=(0,1)$ by precisely 
$$
(-1)^{s}\;=\; (-1)^{s^2}\;
=\;(-1)^{m(m-1)/2}\cdot(-1)^{s(s-1)/2}\cdot(-1)^{t(t-1)/2}.
$$
This finishes the proof that the isomorphism commutes with the differential structure.
\end{proof}

The following result is a consequence of this theorem and Corollary \ref{arecontractible}, since DG-operads with free underlying graded operad are cofibrant.

\begin{cor}
For any commutative ring $\Bbbk$, the operad $\mathtt O\cong C_*(\mathtt{uA}_\infty,\Bbbk)$ is a cofibrant resolution of the DG-operad for unital DG-algebras.
\end{cor}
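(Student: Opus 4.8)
The plan is to verify the two defining properties of a cofibrant resolution separately: that $\mathtt O$ is cofibrant, and that it admits a quasi-isomorphism onto the DG-operad $\mathcal U$ for unital DG-algebras, where $\mathcal U(n)=\Bbbk$ is concentrated in degree zero for every $n\geq 0$, with the evident associative composition and with unit $u$. Cofibrancy is the easy half: the operad $\mathtt O$ was constructed to be free as a graded operad, and for non-symmetric DG-operads over an arbitrary commutative ring the forgetful functor to graded operads creates a model structure in which every operad with free underlying graded operad is cofibrant. Here there are no symmetric group actions and hence no characteristic restrictions, so this is exactly the fact quoted just before the statement, applied verbatim.

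For the weak equivalence I would argue as follows. First, by the preceding theorem there is an isomorphism of DG-operads $\mathtt O\cong C_*(\mathtt{uA}_\infty,\Bbbk)$, so it suffices to understand the homology of the cellular chains. Each space $\mathtt{uA}_\infty(n)=K^u_n$ is contractible by Corollary \ref{arecontractible}, whence $H_*(\mathtt O(n))=H_*(K^u_n;\Bbbk)=\Bbbk$ concentrated in degree zero, for every $n\geq 0$. Because these homologies are free of rank one, there are no Künneth obstructions and the graded operad structure passes to homology, so $H_*(\mathtt O)$ is an operad with $\Bbbk$ in each arity. Next, all generators $\mu_{n+m}^S$ have non-negative degree $2m+n-2$, so $\mathtt O$ is concentrated in non-negative degrees; this gives a canonical projection of DG-operads $\mathtt O\to H_0(\mathtt O)$ which is the quotient map in degree zero and vanishes in positive degrees. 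Since $H_{>0}(\mathtt O)=0$, this projection induces an isomorphism on homology, that is, it is a quasi-isomorphism.

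It remains to identify $H_0(\mathtt O)$ with $\mathcal U$, and this is where the explicit low-degree part of the differential enters; I expect this bookkeeping to be the only real point to check. Restricting to the no-cork generators $\mu_n=\mu_{n+0}^{\varnothing}$ recovers the suboperad $C_*(\mathtt A_\infty,\Bbbk)$ with differential \eqref{dm}, whose degree-zero homology is classically the non-unital associative operad generated by the class $[\mu_2]$, with associativity enforced by $d(\mu_3)=\mu_2\circ_1\mu_2-\mu_2\circ_2\mu_2$ up to the signs in \eqref{dm}. The remaining degree-zero generator $\mu_{0+1}^{\{1\}}$, together with the relation $d(\mu_{1+1}^{\{i\}})=\mu_2\circ_i\mu_{0+1}^{\{1\}}-u$ from \eqref{dmu}, forces $[\mu_2]$ to admit $[\mu_{0+1}^{\{1\}}]$ as a two-sided unit in homology (the case $i=1$ giving left and $i=2$ right unitality). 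Hence $H_0(\mathtt O)\cong\mathcal U$, the composite $\mathtt O\to H_0(\mathtt O)\cong\mathcal U$ is a quasi-isomorphism, and combined with cofibrancy this exhibits $\mathtt O$ as a cofibrant resolution of $\mathcal U$. The main obstacle is thus not any hard computation but confirming that the induced operad structure on $H_0$ is precisely the unital associative one; everything else is formal once Corollary \ref{arecontractible} and the preceding theorem are in hand.
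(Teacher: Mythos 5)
Your argument is correct and follows exactly the same route as the paper, which proves the corollary in a single sentence by combining the preceding theorem, the contractibility of the unital associahedra (Corollary \ref{arecontractible}), and the cofibrancy of DG-operads with free underlying graded operad. You have merely filled in the details the paper leaves implicit, in particular the identification of $H_0(\mathtt O)$ with the unital associative operad via the differentials of $\mu_3$ and $\mu_{1+1}^{\{i\}}$, which is the intended reading.
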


Lyubashenko sketches a purely algebraic proof of this result in \cite[1.11]{huainfa}.

% ----------------------------------------------------------------
%\bibliographystyle{amsalpha}
%\bibliography{../Fernando}
\providecommand{\bysame}{\leavevmode\hbox to3em{\hrulefill}\thinspace}
\providecommand{\MR}{\relax\ifhmode\unskip\space\fi MR }
% \MRhref is called by the amsart/book/proc definition of \MR.
\providecommand{\MRhref}[2]{%
  \href{http://www.ams.org/mathscinet-getitem?mr=#1}{#2}
}
\providecommand{\href}[2]{#2}

\end{document}